\theoremstyle{plain} 
	\newtheorem{thm}{Theorem}[section]
	\newtheorem{lem}[thm]{Lemma}
	\newtheorem{cor}[thm]{Corollary}
	\newtheorem{prop}[thm]{Proposition}
\theoremstyle{definition} 
	\newtheorem{defi}[thm]{Definition}
	\newtheorem{example}[thm]{Example}
\theoremstyle{remark}
	\newtheorem{remark}[thm]{Remark}
\newcommand{\N}{\mathbb{N}}
\renewcommand{\O}{\mathbb{O}}
\newcommand{\Q}{\mathbb{Q}}
\newcommand{\R}{\mathbb{R}}
\renewcommand{\d}{\mathcal{D}}
\newcommand{\p}{\mathcal{P}}
\newcommand{\s}{\mathcal{S}}
\renewcommand{\epsilon}{\varepsilon}
\renewcommand{\phi}{\varphi}
\newcommand{\ol}[1]{\overline{#1}}
\newcommand{\set}[1]{ \left\lbrace #1\right\rbrace}
\newcommand{\diam}{\text{diam}}
\newcommand{\omf}{\omega^{2}}
\newcommand{\cw}{\curlywedge}
\newcommand{\pc}{\prec}
\newcommand{\bap}{ \text{B}_\p}
\newcommand{\cbap}{ \ol{\text{B}}_\p}
\newcommand{\bas}{ \text{B}_\s}
\newcommand{\cbas}{  \ol{\text{B}}_\s}
\newcommand{\ba}{ \text{B}}
\newcommand{\cb}{ \text{CB}}
\tikzset{node distance=2cm, auto}
\begin{document}
\title{\bf Cantor-Bendixson type ranks on Polish spaces}
\author{Vibeke Quorning}
\maketitle

\begin{abstract} For any Polish space $X$ it is well-known that the Cantor-Bendixson rank provides a co-analytic rank on $F_{\aleph_0}(X)$ if and only if $X$ is a $\sigma$-compact. In the case of $\omega^\omega$ one may recover a co-analytic rank on $F_{\aleph_0}(\omega^\omega)$ by considering the Cantor-Bendixson rank of the induced trees instead. In this paper we will generalize this idea to arbitrary Polish spaces and thereby construct a family of co-analytic ranks on $F_{\aleph_0}(X)$ for any Polish space $X$. We study the behaviour of this family and compare the ranks to the original Cantor-Bendixson rank.
The main results are characterizations of the compact and $\sigma$-compact Polish spaces in terms of this behaviour. 
\end{abstract}

\section*{Introduction}
In this paper we will consider the Effros Borel space $F(X)$ of closed subsets of a Polish space $X$, and the co-analytic subset $F_{\aleph_0}(X)$ of countable closed subsets. Recall that a subset is called co-analytic if it is the complement of an analytic set. It is well known that $F_{\aleph_0}(X)$ is co-analytic  and not Borel when $X$ is uncountable. 

A key property of co-analytic sets is that they admit a co-analytic rank into $\omega_1$. Given a set $A$, a rank into $\omega_1$ is a map $\phi\colon A\rightarrow \omega_1$. If $A$ is a co-analytic subset of a Polish space $X$, the rank $\phi$ is co-analytic if for all $\alpha<\omega_1$ the initial segment
\[ A_\alpha^\phi=\set{x\in A\mid \phi(x)\leq\alpha}\]
is Borel in a uniform manner in $X$.
This in particular ensures that any co-analytic set $A$ is an increasing union of $\omega_1$ many Borel sets $(B_\alpha)_{\alpha\in \omega_1}$\[A=\bigcup_{\alpha<\omega_1}B_\alpha.\]

The main result concerning co-analytic ranks is the Boundedness Theorem, which states that if $\phi\colon A\rightarrow \omega_1$ is a co-analytic rank from a co-analytic subset $A\subseteq X$, then  $A$ is Borel in $X$ if and only if $$\sup\set{\phi(x)\mid x\in A}<\omega_1.$$ Moreover, if $B\subseteq A$ is analytic in $X$, then $\sup\set{\phi(x)\mid x\in B}<\omega_1$.

The first part of the theorem highlights how these ranks provides a powerful tool for proving that certain subsets are not Borel. For example in \cite{WW} co-analytic ranks are used to prove that, in a certain parametrization of countable groups, the subset of elementary amenable groups is not Borel while the subset of amenable groups is. This result thereby gives a non-constructive existence proof of an amenable group that is not elementary amenable. 

The second part of the theorem ensures a uniformity of the co-analytic ranks that a fixed co-analytic set $A$ admits. Indeed it implies that if $\phi,\psi \colon A\rightarrow \omega_1$ are both co-analytic ranks, then there exist  functions $f,g\colon \omega_1\rightarrow \omega_1$ such that $\phi(x)\leq f(\psi(x))$ and $\psi(x)\leq g(\phi(x))$ for all $x\in A$. So all co-analytic ranks on $A$ agree on which subsets are bounded and which subsets are not. Therefore, each co-analytic subset of a Polish space admits a natural $\sigma$-ideal of bounded sets. 
Even though it is known that any co-analytic set has a co-analytic rank and therefore also admits this $\sigma$-ideal, the proof does not provide a concrete rank for a given co-analytic set. Thus it is of interest to find concrete co-analytic ranks to determine the structure of the co-analytic set in terms of the $\sigma$-ideal of bounded sets. 

As mentioned earlier, we will here consider $F_{\aleph_0}(X)$. A natural rank on this co-analytic set is the Cantor-Bendixson rank, which assigns to each $F\in F_{\aleph_0}(X)$ the length of the transfinite process of removing isolated points. However, the Cantor-Bendixson rank is only co-analytic when the underlying Polish space $X$ is $\sigma$-compact. In fact, as mentioned in  \cite[Section 34.F]{Kechris}, there does not seem to be known an explicit co-analytic rank on $F_{\aleph_0}(X)$ for a general Polish space $X$.

In the specific case of the Baire space $\omega^\omega$, there is a natural correspondence between $F(\omega^\omega)$ and the trees on $\omega$. A tree $T$ on $\omega$ is a subset of finite sequences of numbers in $\omega$, which is closed under initial segments.  One then associates to each $F\in F(\omega^\omega)$ the tree $T_F$ consisting of all finite initial segments of the elements of $F$. 
Moreover, on the set of trees on $\omega$ there is a Cantor-Bendixson rank, which assigns to each tree the length of the transfinite process of removing isolated branches of the tree (see \cite[Exercise 6.15]{Kechris}). We obtain a co-analytic rank on $F_{\aleph_0}(\omega^\omega)$  by assigning to each $F$ the Cantor-Bendixson rank of $T_F$.  

The first goal of this paper is to generalize the construction used in the  case  of the Baire space to arbitrarily Polish spaces. This is done by using a countable family of balls, induced by a complete metric and a countable dense sequence of the Polish space, to encode the closed subsets. A presentation $\p=(X,d,(x_i)_{i\in \omega})$ of a Polish space $X$ is a Polish space $X$ equipped with a fixed choice of a complete compatible metric and a countable dense sequence. We then prove the following theorem. 

\begin{thm}\label{01}
For each presentation $\p=(X,d,(x_i)_{i\in \omega})$ of a Polish space $X$ there is a co-analytic rank $\phi_\p\colon F_{\aleph_0}(X)\rightarrow \omega_1$ induced by the countable family
$$\set{B_d(x_i,2^{-k})\mid i,k\in \omega}$$ of $\p$-balls.
\end{thm}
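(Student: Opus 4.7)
My approach is to emulate the Baire-space construction from the introduction: using the countable family of $\p$-balls, assign to each $F \in F(X)$ a tree $T_{\p,F}$ on $\omega$, and set $\phi_\p(F)$ equal to the Cantor-Bendixson rank of $T_{\p,F}$. The CB rank on $\Tree(\omega)$ is known to be a co-analytic rank on the class of scattered trees (\cite[Exercise 6.15]{Kechris}); provided the assignment $F \mapsto T_{\p,F}$ is Borel and $T_{\p,F}$ is scattered iff $F$ is countable, the composition $\phi_\p$ will automatically be a co-analytic rank on $F_{\aleph_0}(X)$.

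Concretely, I would enumerate the $\p$-balls as $(B_n)_{n \in \omega}$ and, as a first attempt, define $T_{\p,F} \subseteq \omega^{<\omega}$ to consist of finite sequences $(n_1, \ldots, n_m)$ satisfying: (i) the radius of $B_{n_j}$ is at most $2^{-j}$; (ii) $\overline{B_{n_{j+1}}} \subseteq B_{n_j}$; (iii) $F \cap B_{n_m} \neq \emptyset$. Conditions (i) and (ii) do not depend on $F$, and (iii) is open in the Effros Borel structure, so $F \mapsto T_{\p,F}$ is Borel into $\Tree(\omega)$. Combined with completeness of $d$, (i) and (ii) ensure via Cantor's intersection theorem that each infinite branch of $T_{\p,F}$ singles out a unique point of $X$; (iii) and closedness of $F$ force this point to lie in $F$. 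For the easy half of the scatteredness-countability equivalence, if $F$ is uncountable, extracting a Cantor set $K \subseteq F$ via the perfect set theorem and then recursively partitioning $K$ into finer disjoint clopen pieces, each enclosed in a $\p$-ball of the required radius, produces a perfect subtree of $T_{\p,F}$.

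The hard part is the converse: that countable $F$ forces $T_{\p,F}$ to have no perfect subtree. With the naive definition above this can fail, because a single point lies in many overlapping $\p$-balls at each scale, and these can be assembled into uncountably many distinct nested chains, so already a singleton $F$ may render $T_{\p,F}$ non-scattered. Resolving this is the principal technical work: the tree must be refined so that its infinite branches stand in at most countable-to-one correspondence with points of $F$. A natural route is to pass from the raw $\p$-balls to a Luzin-type scheme of Boolean combinations of them, for instance the Voronoi-like cells $B_d(x_i,2^{-k}) \setminus \bigcup_{j<i} B_d(x_j,2^{-k})$, which give each point of $X$ a canonical address in $\omega^\omega$ and make every infinite branch of the refined $T_{\p,F}$ correspond to a unique point of $F$. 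The Borel-ness of the refined assignment then has to be re-established with (iii) now asserting that $F$ meets a locally closed rather than open set; this is handled by decomposing each locally closed set as an open intersected with a closed set and unpacking the resulting conditions in the Effros Borel structure. With this in place, $\phi_\p := \mathrm{CB}\text{-rank}(T_{\p,F})$ is the desired co-analytic rank.
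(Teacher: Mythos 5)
You correctly identify the crux: with overlapping balls, even a singleton yields a non-scattered tree under the naive ``two incomparable extensions'' derivative, so the Baire-space recipe cannot be transplanted verbatim. However, your proposed repair --- disjointifying the balls into a Luzin-type scheme of cells $B_d(x_i,2^{-k})\setminus\bigcup_{j<i}B_d(x_j,2^{-k})$ --- trades this problem for a definability problem that your sketch does not resolve and that is, in general, fatal. Once the nodes of $T_{\p,F}$ are determined by conditions of the form ``$F$ meets the locally closed set $U\cap C$,'' the map $F\mapsto T_{\p,F}$ need not be Borel: for a fixed closed set $C$ in a non-$\sigma$-compact Polish space, the set $\set{F\in F(X)\mid F\cap C\neq\emptyset}$ is in general properly analytic. (Take $X=\omega^\omega\times\omega^\omega$ and $C=[T]$ for a tree $T$ projecting to a non-Borel analytic set $A$; then $x\in A$ iff $(\set{x}\times\omega^\omega)\cap C\neq\emptyset$, and $x\mapsto\set{x}\times\omega^\omega$ is Borel into $F(X)$.) ``Decomposing into an open intersected with a closed set and unpacking in the Effros structure'' does not circumvent this, because the Effros $\sigma$-algebra is generated only by hitting \emph{open} sets; hitting closed or locally closed sets is exactly the kind of condition that fails to be Borel outside the $\sigma$-compact setting --- which is the very regime the theorem is meant to cover. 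So as written the proposal has a genuine gap at the step ``the Borel-ness of the refined assignment then has to be re-established.''

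The paper's construction avoids both difficulties at once by never leaving the family of open balls and instead modifying the \emph{derivative}. It codes $F$ by $A_F^\p=\set{n\in\omega^2\mid \bap(n)\cap F\neq\emptyset}$ (every $F$-dependent condition is of the generating form ``$F$ meets an open set,'' so $F\mapsto A_F^\p$ is Borel), and declares that a node $n$ survives the derivative $D_\p$ only if it has two extensions $u,v$ with $n\prec_\p u,v$ \emph{and} $\cbap(u)\cap\cbap(v)=\emptyset$. Requiring disjoint closed balls, rather than merely distinct indices, is what kills the singleton pathology: all balls containing a common point pairwise intersect. The relations $\prec_\p$ and $\cw_\p$ do not involve $F$ at all, $D_\p$ is a continuous monotone derivative on the compact space $P(\omega^2)$, so Theorem 34.10 of \cite{Kechris} applies directly, and the correspondence between $\p$-perfect subsets of $A_F^\p$ and the perfect kernel of $F$ gives $D_\p^\infty(A_F^\p)=\emptyset$ iff $F\in F_{\aleph_0}(X)$. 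If you want to rescue your route, the lesson is the same: keep the conditions on $F$ open, and put the ``disjointness'' into the derivative rather than into the sets.
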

The above theorem implies that for each Polish space $X$ we have a potentially huge family of co-analytic ranks on $F_{\aleph_0}(X)$. The second goal of this paper is to investigate how this family of co-analytic ranks behaves. We prove results stating how the chosen presentation affects the ranks one obtains, and how they relate to the original Cantor-Bendixson rank on $F_{\aleph_0}(X)$. The main results are the following characterizations of the compact and $\sigma$-compact Polish spaces in terms of this behaviour.

Below we will use the following notation. For a Polish space $X$, we let $|X|_{K_\sigma}$ denote the length of the transfinite process of removing locally compact points. In the case where $X$ is $\sigma$-compact, this process terminates with the empty set. Thus in this case $|\cdot |_{K_\sigma}$  measures how far from compact the $\sigma$-compact Polish space is. Moreover, for $F\in F_{\aleph_0}(X)$, we let $|F|_{\cb}$ denote the Cantor-Bendixson rank of $F$.
\begin{thm} Let $X$ be a Polish space. The following are equivalent:
\begin{itemize}
\item[1)] $X$ is compact.
\item[2)] The family $(\phi_\p)_\p$, where $\p$ varies over all presentations of $X$, is uniformly bounded by the Cantor-Bendixson rank.
\item[3)] For any presentation $\p$ of $X$ we have $$\phi_\p(F)<\omega|F |_{\cb}$$ for all $F\in F_{\aleph_0}(X)$.
\end{itemize}
\end{thm}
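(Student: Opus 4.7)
The plan is to establish the cycle $1)\Rightarrow 3)\Rightarrow 2)\Rightarrow 1)$. The implication $3)\Rightarrow 2)$ is immediate, since condition 3) exhibits the single function $\alpha\mapsto\omega\alpha$ from $\omega_1$ to $\omega_1$ as a uniform bound for the family $(\phi_\p)_\p$ in terms of the Cantor-Bendixson rank.

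For $1)\Rightarrow 3)$, fix a compact Polish space $X$ and a presentation $\p$ of it. I would argue by transfinite induction on $\alpha=|F|_{\cb}$ that $\phi_\p(F)<\omega\alpha$. The base case $\alpha=0$ is trivial since then $F=\emptyset$. The key compactness input is that only finitely many $\p$-balls of any given radius $2^{-k}$ are needed to cover $X$, hence finitely many meet $F$. This makes the tree of $\p$-balls encoding $F$ finitely branching at every scale. For the successor step $|F|_{\cb}=\beta+1$, the inductive hypothesis gives $\phi_\p(F')<\omega\beta$, and finite branching lets one show that finitely many further iterations of the $\phi_\p$-derivative past stage $\omega\beta$ suffice to pass from $F$ down to a configuration which, at the ball-tree level, already corresponds to $F'$. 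Combining the two bounds yields $\phi_\p(F)<\omega\beta+\omega=\omega(\beta+1)$. The limit step uses the continuity of $\beta\mapsto\omega\beta$ together with the monotonicity of $\phi_\p$ with respect to intersection along the CB filtration.

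For $2)\Rightarrow 1)$ I argue the contrapositive. If $X$ is not compact then $X$ admits an infinite closed discrete subset, and when $X$ is moreover not $\sigma$-compact, by Hurewicz's theorem it even admits a closed copy of $\omega^\omega$. In either case, for each $\alpha<\omega_1$ I would build a presentation $\p_\alpha$ of $X$ whose dense sequence and compatible metric are tailored so that the family of $\p_\alpha$-balls around a fixed countable closed set $F_\alpha$ (for instance a convergent sequence together with its limit, so $|F_\alpha|_{\cb}=2$) realises a prescribed tree on $\omega$ of tree Cantor-Bendixson rank at least $\alpha$. This forces $\phi_{\p_\alpha}(F_\alpha)\geq\alpha$ while $|F_\alpha|_{\cb}\leq 2$, defeating any purported uniform bound in terms of the Cantor-Bendixson rank, and proving $\neg 1)\Rightarrow \neg 2)$.

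The main obstacle is the successor step of $1)\Rightarrow 3)$: extracting exactly the coefficient $\omega$ from compactness, which requires a careful combinatorial analysis showing that the $\phi_\p$-derivative synchronises with the CB-derivative within at most $\omega$ iterations in the finitely-branching ball tree associated to $F$. The construction of the presentations $\p_\alpha$ in $2)\Rightarrow 1)$ is secondary and more mechanical, as the failure of compactness provides enough room inside $X$ to faithfully embed arbitrary countable tree structures into the geometry of $\p_\alpha$-balls.
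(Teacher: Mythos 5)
Your overall architecture coincides with the paper's ($3)\Rightarrow 2)$ trivially, $1)\Rightarrow 3)$ by relating one Cantor--Bendixson step to $\omega$ many $D_\p$-steps, $2)\Rightarrow 1)$ by building bad presentations around a fixed countable closed set), but both nontrivial implications have genuine gaps. In $1)\Rightarrow 3)$ your stated mechanism is false: it is not true that only finitely many $\p$-balls of a given radius meet $F$. For every level $k$ there are infinitely many indices $i$ with $d(x_i,F)<2^{-k-1}$ (the sequence is dense), so $A_F^\p$ is infinitely branching at every scale. What compactness actually gives is that any family of \emph{pairwise disjoint} closed balls of radius at least $2^{-k-1}$ is finite (their centers are $2^{-k-1}$-separated), and the real work is to convert this into the statement that if $n\in D_\p^{\omega\alpha}(A_F^\p)$ then $\ol{\bap(n)\cap F}$ meets $F^\alpha$: at the successor step one unfolds $D_\p^{\omega\alpha+k}$ into a binary tree of depth $k$ with $2^k$ pairwise disjoint leaves in $D_\p^{\omega\alpha}$, uses $k<2^k$ to pick leaves avoiding previously chosen points, and extracts by compactness a limit point lying in $F^{\alpha+1}$. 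Your phrase ``finitely many further iterations past stage $\omega\beta$ suffice'' is not what happens (one needs a full $\omega$ further iterations, taking a supremum over $k$), and nothing in your sketch supplies this argument. You also never address why the inequality is \emph{strict}: the argument above only yields $\phi_\p(F)\leq \omega|F|_{\cb}$, and strictness requires separately proving that $\phi_\p(F)$ is a successor ordinal when $X$ is compact, which the paper does via a K\"onig-type lemma showing $[D_\p^\beta(A_F^\p)]_\p=\emptyset$ forces $D_\p^{\beta+k}(A_F^\p)=\emptyset$ for some finite $k$, together with a nonemptiness-at-limits argument.

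In $2)\Rightarrow 1)$ your proposed witness is wrong: a convergent sequence together with its limit is \emph{compact}, and for a compact $F$ every closure $\ol{\bap(n)\cap F}$ is compact, so the local version of the lemma from the previous paragraph bounds $\phi_\p(F)$ by $\omega|F|_{\cb}\leq\omega\cdot 2$ for \emph{every} presentation $\p$ of \emph{every} Polish space; such an $F$ can never defeat uniform boundedness. The witness must itself be non-compact, and the correct choice (which non-compactness of $X$ supplies) is an infinite closed discrete set $F$, with $|F|_{\cb}=1$. Moreover, ``tailoring'' the presentation is not mechanical: one first realises arbitrary countable ranks on a closed discrete subset of $\omega^\omega$ via switch-map homeomorphisms and an ultrametric, transfers this to a bounded complete metric on the discrete set $F$, and then must extend that metric to a \emph{complete} compatible metric on all of $X$ -- this last step requires a completeness-preserving version of Bing's metric extension theorem, which is a lemma in its own right. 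Your appeal to Hurewicz and closed copies of $\omega^\omega$ is not needed here; that is relevant to the $\sigma$-compact characterization, not the compact one.
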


\begin{thm} Let $X$ be a Polish space. The following are equivalent:
\begin{itemize}
\item[1)] $X$ is $\sigma$-compact.
\item[2)] For some presentation $\p$ of $X$ there exists $f\colon \omega_1\rightarrow \omega_1$ such that $\phi_\p(F)\leq f(|F|_{\cb})$ for all $F\in F_{\aleph_0}(X)$.
\item[3)] For each presentation $\p$ there exist ordinals $\alpha_\p,\beta_\p <\omega_1$ such that 
\[\phi_\p(F)\leq (\omega|F|_{\cb} + \alpha_\p)|X|_{K_\sigma} + \beta_\p,\]
for all $F\in F_{\aleph_0}(X)$. 
\end{itemize}
\end{thm}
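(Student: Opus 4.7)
The strategy is to prove $3 \Rightarrow 2 \Rightarrow 1 \Rightarrow 3$. The implication $3 \Rightarrow 2$ is immediate: for any Polish space $X$ the iterated $K_\sigma$-derivative stabilises at a countable ordinal by second countability, so $|X|_{K_\sigma} < \omega_1$ regardless of whether $X$ is $\sigma$-compact. Given $\alpha_\p, \beta_\p < \omega_1$ as in 3, the function $f(\xi) := (\omega\xi + \alpha_\p)|X|_{K_\sigma} + \beta_\p$ takes values in $\omega_1$ and witnesses 2.

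For $2 \Rightarrow 1$ I argue by contrapositive. If $X$ is not $\sigma$-compact, then by Hurewicz's theorem $X$ contains a closed subspace $Y$ homeomorphic to $\omega^\omega$. Since $\{F \in F(X) : F \subseteq Y\}$ is Borel, the restriction $\phi_\p|_{F_{\aleph_0}(Y)}$ is a co-analytic rank on the co-analytic set $F_{\aleph_0}(Y)$. Fix any presentation $\p_0$ of $Y$; by \cref{01} the rank $\phi_{\p_0}$ is also a co-analytic rank on the same set, and the second part of the Boundedness Theorem (applied to each Borel initial segment of $\phi_\p$) yields a function $g \colon \omega_1 \to \omega_1$ with $\phi_{\p_0}(F) \leq g(\phi_\p(F))$ for all $F \in F_{\aleph_0}(Y)$. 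To contradict 2 it therefore suffices to exhibit, for each $\alpha < \omega_1$, a set $F_\alpha \in F_{\aleph_0}(Y)$ with $|F_\alpha|_{\cb} = 1$ and $\phi_{\p_0}(F_\alpha) \geq \alpha$: any hypothetical bound $\phi_\p \leq f(|F|_{\cb})$ would then force $\phi_{\p_0}(F_\alpha) \leq g(f(1))$, a single countable ordinal, for every $\alpha$. Such $F_\alpha$ are produced by coding well-founded trees on $\omega$ of arbitrary countable rank into discrete countable closed subsets of $\omega^\omega$ of the form $\{s^\frown 0^\omega : s \text{ is a terminal node of } T\}$, and checking that for these the rank $\phi_{\p_0}$ inherits the combinatorial complexity of the coding tree $T$.

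For $1 \Rightarrow 3$ I proceed by transfinite induction on $\gamma := |X|_{K_\sigma}$. When $\gamma = 1$ the space $X$ is compact, and the preceding theorem gives $\phi_\p(F) < \omega|F|_{\cb}$, matching the required form with, say, $\alpha_\p = \beta_\p = 1$. For the inductive step, let $U \subseteq X$ be the open set of points at which $X$ is locally compact (with respect to the $K_\sigma$-derivative) and set $X^{(1)} = X \setminus U$; then $X^{(1)}$ is a closed Polish subspace with $|X^{(1)}|_{K_\sigma} < \gamma$. Given $F \in F_{\aleph_0}(X)$ and $F_1 := F \cap X^{(1)}$, the plan is to split the tree of $\p$-balls defining $\phi_\p(F)$ into those descending chains whose associated limit lies in $U$, contributing at most an additive term of size $\omega|F|_{\cb} + \alpha_\p$ via local compactness of $U$ and the compact-case estimate, and those whose limit lies in $X^{(1)}$, which reduce to $\phi_{\p'}(F_1)$ for a suitably restricted presentation $\p'$ of $X^{(1)}$ and are controlled by the inductive hypothesis. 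The ordinal identity $(\omega\xi + \alpha)(\delta+1) = (\omega\xi + \alpha)\delta + \omega\xi + \alpha$ and its limit-stage analogue then assemble these contributions into the required form $(\omega|F|_{\cb} + \alpha_\p)\gamma + \beta_\p$.

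The principal obstacle is this inductive step: making rigorous the decomposition of the ball-tree along $X = U \cup X^{(1)}$ so that each passage through the locally compact layer contributes only an additive, uniformly bounded term and the total rank truly scales multiplicatively in $|X|_{K_\sigma}$. This will require a careful choice of the balls of $\p$ compatible with the $K_\sigma$-filtration of $X$, control on how descending ball-chains with radii tending to zero can alternate between $U$ and $X^{(1)}$, and some ordinal arithmetic to absorb the inductive constants $\alpha_{\p'}, \beta_{\p'}$ into a single final pair $\alpha_\p, \beta_\p$.
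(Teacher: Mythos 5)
Your treatment of $(3)\Rightarrow(2)$ is fine, and your $(2)\Rightarrow(1)$ is essentially the paper's argument: restrict to a closed copy of $\omega^\omega$, use the uniformity consequence of Theorem \ref{boundednessthm} to compare $\phi_\p$ with a fixed rank on $F_{\aleph_0}(\omega^\omega)$, and contradict the existence of discrete sets of unbounded rank. For that last ingredient you do not need a fresh coding of well-founded trees: Corollary \ref{descritebairearbit} (built on the switch maps of Theorem \ref{strongdepmet}) already supplies, for the standard ultrametric presentation, discrete $F\in F_{\aleph_0}(\omega^\omega)$ with $\phi_{\p_0}(F)=\alpha$ for every $\alpha<\omega_1$. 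If you insist on your coding, the assertion that $\phi_{\p_0}$ ``inherits the combinatorial complexity of $T$'' is precisely the statement that requires proof and is comparable in difficulty to the construction you are trying to avoid.

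The genuine gap is $(1)\Rightarrow(3)$, which you present as a plan with a self-declared ``principal obstacle'' rather than an argument. Two concrete problems. First, the base case is wrong: $|X|_{K_\sigma}=1$ means $X^*=\emptyset$, i.e.\ $X$ is \emph{locally} compact, not compact, so Theorem \ref{upperbound} does not apply; indeed, by Theorem \ref{A}, for any non-compact $X$ (e.g.\ $\R$) there is a presentation $\p$ and $F$ with $\phi_\p(F)\geq\omega|F|_{\cb}$, so an additive constant is already unavoidable at this stage. Second, the induction on $|X|_{K_\sigma}$ through closed subspaces $X^{(1)}$ has no clear mechanism: the balls of $\p$ are centred at points of $X$ rather than of $X^{(1)}$, so there is no ``restricted presentation $\p'$'' whose derivative process is dominated by that of $\p$, and controlling chains of balls that meet both $U$ and $X^{(1)}$ is exactly the unproved content. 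The paper avoids inducting on subspaces altogether: it keeps the single presentation $\p$ and partitions the index set $\omega^2$ into $A^0_\beta$, $A^1_\beta$ ($\beta<|X|_{K_\sigma}$) and $C$, according to which layer $X_\beta\setminus X_{\beta+1}$ a ball meets and whether the closure of its trace on $X_\beta$ is compact. The two essential points are (i) a localized version of the compact estimate (its Claim 1, in the spirit of Lemma \ref{locallyupperbound}) handling the $A^0_\beta$, and (ii) the observation that $[A^1_\beta]_\p=[C]_\p=\emptyset$, so that these index sets have $D_\p$-rank bounded by fixed countable ordinals $\alpha_\p,\beta_\p$ independent of $F$; this is the sole source of the ``additive, uniformly bounded'' contribution of the non-compact layers. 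A single transfinite computation of $D_\p^\eta(A_F^\p)$ (its Claim 2) then assembles the bound $(\omega|F|_{\cb}+\alpha_\p)|X|_{K_\sigma}+\beta_\p$. Your outline contains neither (ii) nor a substitute for it, so the multiplicative scaling in $|X|_{K_\sigma}$ is not actually established.
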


The paper is organized as follows. The first section consists of various preliminaries and includes a very brief introduction to co-analytic ranks. In the second section we describe the construction of the ranks, and in the third section we investigate the dependence of the rank on the chosen presentation. In the fourth and final section we compare the family of obtained ranks to the Cantor-Bendixson rank and prove the characterizations described above. 

\medskip
{\it Acknowledgments.} I would like to thank my advisor, Asger T\"ornquist, for many valuable discussions. I would also like to thank Joshua Hunt and Alexander S. Kechris for several useful comments on  earlier drafts.

The author was partially supported by Lars Hesselholt's Niels Bohr Professorship.

\section{Preliminaries}
A \textbf{Polish space} is a separable completely metrizable topological space. 
Examples of these include $\R$ and $\omega^\omega$, where the latter is equipped with the product topology induced by the discrete topology on $\omega=\set{0,1,2,\ldots }$. 
A subset of a Polish space is called \textbf{analytic} if it is the continuous image of a Polish space. The complement of an analytic set is called \textbf{co-analytic}. For an uncountable Polish space both the family of analytic sets and the family of co-analytic sets contain the Borel sets as a proper subset. In fact, by Souslin's Theorem the Borel sets are exactly the sets that are both analytic and co-analytic. Both classes are also closed under Borel pre-images.

A \textbf{standard Borel space} is a Polish space equipped with its Borel $\sigma$-algebra. An important example of such a space is the \textbf{Effros Borel space}, which is defined as follows. Let $X$ be a Polish space and let $F(X)$ denote the set of all closed subsets of $X$. We equip $F(X)$ with the Borel structure generated by the sets
\[\set{F\in F(X)\mid F\cap U\neq \emptyset },\]
where $U\subseteq X$ varies over all open subsets, which turns $F(X)$ into a standard Borel space. We are in particular interested in two subsets of the Effros Borel space, namely the subset $K(X)$ consisting of all the compact subsets and the subset $F_{\aleph_0}(X)$ consisting of all countable closed subsets. 
The former is Borel, while the latter is a non-Borel co-analytic subset when $X$ is uncountable. 
\\

A \textbf{rank} on a set $S$ is a map $\phi\colon S\rightarrow \O\N$. Here $\O\N$ denotes the ordinals. We will let $\alpha_\phi=\sup\set{ \phi(x)\mid x\in S}$. If $\phi(S)=\alpha_\phi$ the rank is called \textbf{regular}. For each rank $\phi\colon S\rightarrow \O\N$ we obtain a prewellordering $\leq_\phi$ on $S$ given by
\[x\leq_\phi y \iff \phi(x)\leq \phi(y).\]
For the class of co-analytic sets certain ranks are of special interest, as these ensure that this prewellordering has nice definability properties.
\begin{defi} Let $X$ be a Polish space and $A\subseteq X$ a co-analytic set. A rank $\phi\colon A\rightarrow \O\N$ is said to be co-analytic if there exist binary relations $R_\phi^{\Pi_1^1},R_\phi^{\Sigma_1^1}\subseteq X\times X$ such that $R_\phi^{\Pi_1^1}$ is co-analytic, $R_\phi^{\Sigma_1^1}$ is analytic and for any $y\in A$ we have
\[xR_\phi^{\Pi_1^1}y \iff xR_\phi^{\Sigma_1^1}y \iff (x\in A )\wedge (x\leq_\phi y)\]
for all $x\in X$.
\end{defi}

So a rank is co-analytic if the induced prewellordering on $A$ extends to both an analytic and a co-analytic relation on $X$, which preserve the initial segments of $A$. This implies in particular that for each $\alpha<\alpha_\phi$ we have
\[A_\alpha^\phi =\set{x\in A\mid \phi(x)\leq \alpha}\]
is Borel in $X$. 
The following well-known theorem ensures that there always exists a co-analytic rank on any co-analytic set with $\alpha_\phi\leq \omega_1$. For a proof see Theorem 34.4 in \cite{Kechris}.

\begin{thm}\label{existencecorank} Let $X$ be Polish and $A\subseteq X$ a co-analytic set. There exists a co-analytic rank $\phi\colon A\rightarrow \omega_1$.
\end{thm}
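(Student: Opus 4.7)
The plan is to invoke the classical Lusin--Sierpi\'nski representation of co-analytic sets via well-founded trees and transfer the natural ordinal rank on trees to $A$. The standard fact is that every co-analytic $A \subseteq X$ admits a Borel map $x \mapsto T_x$ from $X$ into the Polish space of trees on $\omega$ such that $x \in A$ if and only if $T_x$ is well-founded. It therefore suffices to produce a co-analytic rank $\rho$ on the set $\mathrm{WF}$ of well-founded trees and define $\phi(x) = \rho(T_x)$, since the required $\Sigma_1^1$ and $\Pi_1^1$ relations for $\phi$ are then obtained as Borel pre-images of the corresponding relations for $\rho$ under $(x,y) \mapsto (T_x, T_y)$.

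For $\rho$ itself I would take the usual tree rank: set $\rho_T(s) = \sup\{\rho_T(t) + 1 : t \in T,\ s \subsetneq t,\ |t| = |s| + 1\}$ (with $\sup \emptyset = 0$) and $\rho(T) = \rho_T(\emptyset)$; well-foundedness of $T$ ensures this is a countable ordinal. For the witnessing relations on the space of trees I would declare $T \leq^{\Sigma} S$ to hold when there exists $f \colon \omega^{<\omega} \to \omega^{<\omega}$ with $f(T) \subseteq S$ and $s \subsetneq t \implies f(s) \subsetneq f(t)$, which is manifestly $\Sigma_1^1$; and $T \leq^{\Pi} S$ to hold when there is no $g \colon \omega^{<\omega} \to \omega^{<\omega}$ satisfying $g(S) \subseteq T$, $g(\emptyset) \neq \emptyset$, and $s \subsetneq t \implies g(s) \subsetneq g(t)$, which is manifestly $\Pi_1^1$. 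The assertion is that both relations coincide with ``$T \in \mathrm{WF}$ and $\rho(T) \leq \rho(S)$'' whenever $S \in \mathrm{WF}$.

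The main obstacle is the \emph{comparison lemma}: for $T, S \in \mathrm{WF}$, $\rho(T) \leq \rho(S)$ if and only if there exists a monotone map $T \to S$ as above. The reverse implication is a quick induction on $\rho(T)$ using that the image of a proper extension properly extends the image of its predecessor. The forward direction is proved by transfinite induction on $\rho(S)$: set $f(\emptyset) = \emptyset$ and, for each child $\langle n \rangle$ of the root of $T$, choose a child $\langle m \rangle$ of the root of $S$ with $\rho(T_{\langle n \rangle}) \leq \rho(S_{\langle m \rangle})$ (possible because $\rho(T) \leq \rho(S)$ and these ranks are suprema of successors of child-ranks), and then extend $f$ above $\langle n \rangle$ using the inductive hypothesis applied to $T_{\langle n \rangle}$ and $S_{\langle m \rangle}$. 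Once this lemma is established, verifying that $\leq^{\Sigma}$ and $\leq^{\Pi}$ both capture the intended order on the fiber $\{S \in \mathrm{WF}\}$ is routine: the well-foundedness transfer along $\leq^{\Sigma}$ and along $\leq^{\Pi}$ is handled by an infinite-branch argument, while the shift construction $g = \langle m \rangle \mathbin{{}^{\frown}\!} f$ converts any $f$ witnessing $\rho(S) < \rho(T)$ into one with $g(\emptyset) \neq \emptyset$. Pulling back along $x \mapsto T_x$ then produces $R_\phi^{\Sigma_1^1}$ and $R_\phi^{\Pi_1^1}$ as required.
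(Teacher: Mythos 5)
The paper does not prove this theorem itself but defers to \cite{Kechris}*{Theorem 34.4}, and your argument is precisely the standard proof found there: reduce $A$ to the set of well-founded trees via a Borel map $x\mapsto T_x$, take the usual tree rank, and witness the induced prewellordering by the $\Sigma^1_1$ relation ``there exists a monotone map $T\to S$'' and the $\Pi^1_1$ relation ``there is no monotone map $S\to T$ moving the root,'' with the comparison lemma and the shift trick supplying the needed equivalences. The proposal is correct and takes essentially the same route as the cited source (the only cosmetic point being that monotonicity of the witnessing maps should be required only on pairs of nodes of the tree, or else one must note that a monotone map defined on $T$ extends to all of $\omega^{<\omega}$).
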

The main interest in these co-analytic ranks comes from the Boundedness Theorem. A proof can be found in \cite[Theorem 35.23]{Kechris}.
\begin{thm}[The Boundedness Theorem]\label{boundednessthm} Let $X$ be Polish, $A\subseteq X$ a co-analytic set and $\phi\colon A\rightarrow \omega_1$ a co-analytic rank. Then the following hold:
\begin{itemize}
\item[1)] $A$ is Borel if and only if $\alpha_\phi<\omega_1$.
\item[2)] If $B\subseteq A$ is analytic, then $\sup\set{\phi(x)\mid x\in B}<\omega_1$.
\end{itemize}
\end{thm}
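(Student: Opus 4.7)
For part (1), I would first establish the auxiliary fact that every initial segment $A_\alpha^\phi$ with $\alpha<\omega_1$ is Borel in $X$. If some $z\in A$ realizes $\phi(z)=\alpha$, then by the very definition of a co-analytic rank
\[A_\alpha^\phi=\set{x\in X\mid xR_\phi^{\Pi_1^1}z}=\set{x\in X\mid xR_\phi^{\Sigma_1^1}z},\]
which is simultaneously analytic and co-analytic, hence Borel by Souslin's theorem. The remaining $\alpha$ fall into place by a straightforward transfinite induction on $\alpha<\omega_1$. Granted this, the implication $\alpha_\phi<\omega_1\Rightarrow A$ Borel is immediate since $A=A_{\alpha_\phi}^\phi$. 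The converse $A$ Borel $\Rightarrow \alpha_\phi<\omega_1$ reduces at once to part (2) applied with $B=A$, so the whole theorem hinges on part (2).

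For part (2), my plan is to extract an analytic well-founded relation of rank $\sup\set{\phi(x)\mid x\in B}$ and invoke the Kunen--Martin theorem. On $X\times X$ I would define
\[x<_\phi^* y\iff xR_\phi^{\Sigma_1^1}y\wedge\neg\bigl(yR_\phi^{\Pi_1^1}x\bigr),\]
which is the intersection of $R_\phi^{\Sigma_1^1}$ with the complement of a co-analytic relation (with coordinates swapped), hence analytic. Whenever $x,y\in A$ this unwinds precisely to $\phi(x)<\phi(y)$, so the restriction $R_0=\mathord{<_\phi^*}\cap(B\times B)$ is a well-founded relation on $B\subseteq A$ whose rank equals the order type of $\phi(B)\subseteq\omega_1$. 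If we had $\sup\set{\phi(x)\mid x\in B}=\omega_1$ then $\phi(B)$ would be cofinal in $\omega_1$ and $R_0$ would have rank at least $\omega_1$. But $R_0$ is analytic in $X\times X$, so by Kunen--Martin its rank must be countable — contradiction. Therefore $\sup\set{\phi(x)\mid x\in B}<\omega_1$.

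The main obstacle is the Kunen--Martin theorem itself, which is where all the real work lives. Its proof represents the analytic well-founded relation as the projection of a tree on $\omega^\omega\times\omega^\omega$, builds an auxiliary tree on a countable set whose well-foundedness is forced by well-foundedness of the original relation, and transfers ranks via a monotone assignment; the conclusion then follows from the elementary fact that well-founded trees on a countable set have countable rank. If one wished to avoid invoking Kunen--Martin as a black box, a plausible alternative is a reflection argument: assuming $\sup\set{\phi(x)\mid x\in B}=\omega_1$, the analytic set $V=\set{y\in X\mid\exists x\in B,\ \neg(xR_\phi^{\Pi_1^1}y)}$ contains the co-analytic set $A$, and one would seek a contradiction via Lusin separation applied to the resulting tower of Borel separators. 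However, I expect that carrying out such an argument essentially unfolds the combinatorial content of Kunen--Martin, so the theorem seems unavoidable in any efficient proof.
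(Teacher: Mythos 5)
Your argument is correct and is essentially the standard proof that the paper itself defers to by citing \cite[Theorem 35.23]{Kechris}: initial segments are Borel via Souslin's theorem applied to the two rank relations, the forward direction of (1) follows from $A=A^\phi_{\alpha_\phi}$, the converse reduces to (2), and (2) follows by observing that $x<^*_\phi y\iff xR_\phi^{\Sigma_1^1}y\wedge\neg(yR_\phi^{\Pi_1^1}x)$ restricts to an analytic well-founded relation on $B$ of rank $\geq\omega_1$ if $\phi$ were unbounded on $B$, contradicting Kunen--Martin. You are also right that Kunen--Martin is where the substantive work lies; since the paper quotes this theorem without proof, invoking it as a black box is entirely appropriate here.
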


Note that it follows directly from this theorem that if $\phi,\psi\colon A\rightarrow \omega_1$ are both co-analytic ranks, then there exists $f\colon \omega_1\rightarrow \omega_1$ such that
$\phi(x)\leq f(\psi(x))$ for all $x\in A$. Indeed, put $f(\alpha)=\sup\set{\phi(x)\mid \psi(x)\leq \alpha}$ for all $\alpha<\omega_1$. In this case we say that $\phi$ is \textbf{bounded} by $\psi$ via $f$. If we have a family of ranks $(\phi_i)_{i\in I}$  and there is a function $f\colon \omega_1\rightarrow \omega_1$ such that $\phi_i(x)\leq f(\psi(x))$ for all $x\in A$ and all $i\in I$, we say that the family is \textbf{uniformly bounded} by $\psi$ via $f$. \\
 
One way to obtain co-analytic ranks on co-analytic subsets of the Effros Borel space is done by use of Borel derivatives. Let $X$ be Polish and let $\d= F(X)$ or $\d=K(X)$. A Borel derivative on $\d$ is a Borel map $D\colon \d\rightarrow \d$ satisfying $D(F)\subseteq F$ and $F_0\subseteq F_1\implies D(F_0)\subseteq D(F_1)$ for all $F,F_0,F_1\in \d$. Whenever we have such a derivative, we define the iterated derivatives of $F\in \d$ as follows:
\[D^0(F)=F, \qquad D^{\alpha+1}(F)=D(D^\alpha(F)) \quad \ \text{and} \ \quad  D^\lambda(F) =\bigcap_{\beta<\lambda} D^\beta(F),\]
where $\alpha,\lambda\in \O\N$ and $\lambda$ is a limit ordinal. 
The least $\alpha\in \O\N$ satisfying $D^\alpha(F)=D^{\alpha+1}(F)$ is denoted by $|F|_D$ and we let $D^\infty(F)=D^{|F|_D}(F)$. Thus we obtain a rank on $\d$ given by $F\mapsto |F|_D$. 

An important example of such a derivative is the \textbf{Cantor-Bendixson derivative} 
$D_{\cb}\colon F(X)\rightarrow F(X)$ which is given by
\[D_\cb(F)=\set{x\in F\mid x \ \text{is a limit point in} \ F}.\]
Note that  $|F|_\cb<\omega_1$ for all $F\in F(X)$. We call $|F|_\cb$ the \textbf{Cantor-Bendixson rank} of $F$.  To simplify notation later on we will write $F^\alpha$ instead of $D_\cb^\alpha(F)$ for all $F\in F(X)$ and all $\alpha<\omega_1$. 
We should also point out that $D_\cb^\infty(F)=P_F$, where $P_F$ denotes the perfect kernel of $F$.
Recall that a subspace of a Polish space is called \textbf{perfect} if it is closed and does not contain any isolated points. Note that if such a set is non-empty, then it is uncountable. Every Polish space $X$ decomposes uniquely as
$X=P\sqcup C$, where $P$ is perfect and $C$ is countable (see \cite[Theorem 6.4]{Kechris}). The subset $P$ is called the \textbf{perfect kernel} of $X$. Thus $D^\infty(F)=\emptyset$ if and only if $F\in F_{\aleph_0}(X)$.

The theorem below provides a sufficient criterion for when a rank induced by a Borel derivative is co-analytic. For a proof see Theorem 34.10 in \cite{Kechris}.

\begin{thm}\label{co-anrankfx} Let $X$ be a Polish space. Assume $D\colon \d \rightarrow \d$ is a Borel derivative, where either $\d=K(X)$, or $X$ is $\sigma$-compact and $\d=F(X)$. Then
\[\Omega_D=\set{F\in \d \mid D^\infty(F)=\emptyset}\]
is a co-analytic set and the rank $\phi_D\colon \Omega_D\rightarrow \O\N$ given by $\phi_D(F)=|F|_D$ is co-analytic.
\end{thm}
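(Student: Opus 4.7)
The plan is to split the proof into two parts---co-analyticity of $\Omega_D$, and the co-analytic-rank property of $\phi_D$---both resting on the key identity that $D^\infty(F)$ is the largest $D$-stable element of $\d$ contained in $F$, namely
\[D^\infty(F) = \bigcup \set{K \in \d \mid K \subseteq F,\ D(K) = K}.\]
For this identity I would verify by transfinite induction on $\alpha$ that any $K \in \d$ with $K \subseteq F$ and $D(K) = K$ satisfies $K \subseteq D^\alpha(F)$ (using monotonicity of $D$ at successors and the intersection definition at limits); conversely, $D^\infty(F)$ is itself $D$-stable at the fixed point of the iteration, which is reached at some countable ordinal because strictly decreasing chains of closed sets in the second-countable space $X$ have countable length.

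From this identity, $F \in \Omega_D$ iff no non-empty $K \in \d$ with $K \subseteq F$ satisfies $D(K) = K$; equivalently,
\[F \in \Omega_D \iff \forall K \in \d\;(K = \emptyset \vee K \not\subseteq F \vee D(K) \neq K),\]
which is $\Pi_1^1$ since $\d$ is standard Borel, $D$ is Borel, and the Effros inclusion relation is Borel. For the rank claim I would exhibit a $\Pi_1^1$ relation and a $\Sigma_1^1$ relation on $\d \times \d$ extending the prewellordering $\leq_{\phi_D}$ as required by the definition of a co-analytic rank. The initial segments $\set{F \in \d \mid D^\alpha(F) = \emptyset}$ are uniformly Borel in $\alpha < \omega_1$ by transfinite induction, using that $D$ is Borel and that the Borel $\sigma$-algebra is closed under countable intersection. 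To upgrade this to the required pair of relations, I would apply the stable-set characterization to pair configurations in $\d \times \d$: the $\Sigma_1^1$ relation asserts the existence of a witness aligning the $D$-iteration on $F$ with a possibly longer one on $G$---for instance, a well-founded tree of $\d$-elements encoding the descent---while the $\Pi_1^1$ relation dually asserts the absence of a witness that $F$'s iteration outruns $G$'s.

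The main obstacle is this last step: expressing the ordinal comparison $|F|_D \leq |G|_D$ in a first-order way over $\d$, without explicit quantification over $\omega_1$. The resolution leverages the same countable-length fact used in the first part, so that the depth of iteration can be captured by countable well-founded objects definable inside the standard Borel structure. Compactness in the $\d = K(X)$ case, or $\sigma$-compactness of $X$ in the $\d = F(X)$ case, is essential to ensure the iteration actually terminates at $\emptyset$ rather than at some non-empty fixed point, which is precisely what makes $\phi_D$ take values in $\omega_1$ and the construction yield a genuine co-analytic rank.
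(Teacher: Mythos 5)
The paper does not prove this theorem; it quotes it from Kechris (Theorem 34.10 of \cite{Kechris}), so your attempt has to be judged against the standard argument. The first half of your proposal is correct and is essentially that argument: $D^\infty(F)$ is the largest $D$-fixed element of $\d$ contained in $F$, inclusion is a Borel relation on $\d\times\d$, and the resulting universal definition exhibits $\Omega_D$ as $\Pi^1_1$. (Note that this half uses neither compactness nor $\sigma$-compactness and works for $\d=F(X)$ over any Polish $X$.)

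The rank half has a genuine gap, located exactly where you place ``the main obstacle,'' and it is compounded by a misdiagnosis of where the hypotheses are used. First, the claim that the initial segments $\set{F\in\d\mid D^\alpha(F)=\emptyset}$ are uniformly Borel ``using that the Borel $\sigma$-algebra is closed under countable intersection'' conflates closure of a $\sigma$-algebra under intersections of \emph{sets} with Borelness of the intersection \emph{map} $\d^\omega\to\d$, $(F_n)\mapsto\bigcap_nF_n$, which is what the limit stages of the iteration actually require. That map is Borel on $K(X)^\omega$, and on $F(X)^\omega$ when $X$ is $\sigma$-compact, but not in general: for $X=\omega^\omega$ the set of decreasing sequences of closed sets with nonempty intersection is $\Sigma^1_1$-complete (send a tree $T$ on $\omega$ to $F_n=\set{x\in\omega^\omega\mid x|n\in T}$, so that $\bigcap_nF_n=[T]$). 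This---and not ``ensuring the iteration terminates at $\emptyset$''---is the role of the hypothesis on $\d$: termination at $\emptyset$ is just the definition of membership in $\Omega_D$ and is not guaranteed by compactness (for $D_{\cb}$ on $K(2^\omega)$, $D^\infty(K)$ is the perfect kernel, typically nonempty), while stabilization at some countable ordinal holds in every Polish space. Since the paper itself observes that $F\mapsto|F|_{\cb}$ fails to be a co-analytic rank on $F_{\aleph_0}(\omega^\omega)$ even though that set is still co-analytic, any correct proof must invoke the hypothesis in the rank part; yours never does. Second, the construction of the required $\Sigma^1_1$ and $\Pi^1_1$ relations is only gestured at. The standard resolution is to code countable ordinals by linear orders $w$ on $\omega$, prove that the iteration $(w,F)\mapsto D^{w}(F)$ along $w$ is Borel (again via the Borel intersection map), and then define the two relations by quantifying over such codes, using boundedness for $WO$ to check that they agree with $\leq_{\phi_D}$ on $\Omega_D$. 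As written, your proposal names this step but does not carry it out.
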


Therefore, when $X$ is a $\sigma$-compact Polish space the rank $F\mapsto |F|_\cb$ is co-analytic on $F_{\aleph_0}(X)$. However, the subset
\[\set{F\in F_{\aleph_0}(\omega^\omega)\mid F \ \text{is discrete}}=\set{F\in F_{\aleph_0}(\omega^\omega)\mid |F|_{\cb}\leq 1}\]
is not Borel, and hence in this case the rank cannot be co-analytic. In fact, since every Polish space which is not $\sigma$-compact contains a closed copy of $\omega^\omega$, this implies that the Cantor-Bendixson rank is co-analytic on $F_{\aleph_0}(X)$ if and only if $X$ is $\sigma$-compact.

\section{The construction}
 
Let $X$ be a Polish space. A  \textbf{presentation} of $X$ is a triple $(X,d,\ol{x})$, where $d$ is a complete compatible metric on $X$ and $\ol{x}=(x_n)_n$ is a dense sequence in $X$. In this section we will construct a co-analytic rank on $F_{\aleph_0}(X)$ from a presentation $(X,d,\ol{x})$ of $X$. As we will see, this rank will share many of the properties of the standard Cantor-Bendixson rank.\\

Fix a presentation $\p=(X,d,\ol{x})$ of  $X$. For each $n\in \omf$ we let \[\bap(n)=\set{x\in X\mid d(x,x_{n(0)})<2^{-n(1)-1}}\] and \[\cbap(n)=\set{x\in X\mid d(x,x_{n(0)})\leq 2^{-n(1)-1}}.\] 
In general for $x\in X$ and $r>0$ we will let $B_d(x,r)$ and $\ol{B}_d(x,r)$ denote the open and closed ball around $x$ with radius $r$ with respect to $d$.
 Define binary relations $\prec_\p$ and $\cw_\p$ on $\omega^2$ by
\begin{align*}
n \prec_\p m &\iff  n(1)<m(1) \ \text{and} \ \cbap(m) \subseteq \bap(n)   \\
\\
n\cw_\p m &\iff \cbap(n) \cap \cbap(m) = \emptyset
\end{align*}
for all $n,m\in \omega^2$. We say that a subset $A\subseteq \omf$ is \textbf{$\p$-closed} if for any $n\in A$ and $m\in \omega^2$ with $m\prec_\p n$, we have that $m\in A$.
We say that $A$ is \textbf{$\p$-perfect} if for all $n\in A$ there are $u,v\in A$ such that $n\prec_\p u,v$ and $u\cw_\p v$. Lastly, we say that $A$ is  \textbf{$\p$-pruned} if for all $n\in A$ there is $m\in A $ with $n\pc_\p m$. Clearly any $\p$-perfect subset is also $\p$-pruned.

We will now establish a definable correspondence between the closed subsets of $X$ and the subsets of $\omf$ that are $\p$-closed and $\p$-pruned. First, for $F\in F(X)$ we let
\[A_F^\p=\set{ n\in \omf \mid \bap(n)\cap F \neq \emptyset}.\]
It is easily seen that $A_F^\p$ is $\p$-closed and $\p$-pruned for all $F\in F(X)$ and that the map $F\mapsto A_F^\p$ from $F(X)$ to $P(\omf)$ is Borel. For the other direction, consider the set of $\p$-compatible sequences in $\omf$ given by
\[ [\omf ]_\p = \set{ (n_i)_i\in (\omf)^\omega \mid (\forall i\in \omega) \ n_i\pc_\p n_{i+1} }.\] 
Note that $[\omf]_\p\subseteq (\omega^2)^\omega$ is closed and hence Polish.
Since $d$ is a complete metric, we obtain a surjective continuous map $\pi_\p \colon [\omf]_\p \rightarrow X$ given by
\[ \pi_\p((n_i)_i)=x \iff \bigcap_{i\in \omega}\bap(n_i)=\set{x}.\]
For any $A\in P(\omf)$ we let 
\[ [A]_\p = \set{ (n_i)_i\in [\omf]_\p \mid (\forall i\in \omega) \ n_i \in A }\]
and let $F_A^\p=\pi_\p([A]_\p)$. If $A$ is $\p$-closed, then $F_A^\p$ is closed in $X$. \\

The following proposition is now straightforward to check.
\begin{prop}\label{correspondence} Let $\p =(X,d,\ol{x})$ be a presentation of a Polish space $X$. Then the following hold:
\begin{itemize}
\item[1)] The map $F\mapsto A_F^\p$ is a Borel isomorphism from $F(X)$ to the space of $\p$-pruned and $\p$-closed subsets of $\omf$ with inverse $A\mapsto F_A^\p$.
\item[2)] For any $F\in F(X)$ it holds that $F$ is perfect if and only if $A_F^\p$ is $\p$-perfect.
\end{itemize}
\end{prop}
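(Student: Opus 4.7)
My plan is to organize part 1) around a single routine \emph{approximation lemma}, and to deduce part 2) from part 1) together with a careful use of $\p$-perfectness. The approximation lemma says: given $n\in\omf$, a point $x\in\bap(n)$, and an open $U\subseteq\bap(n)$ containing $x$, there exists $m\in\omf$ with $n\prec_\p m$, $x\in\bap(m)$, and $\cbap(m)\subseteq U$. This follows from density of $(x_i)$ together with the freedom to choose $m(1)$ large. Granting it, $\p$-closedness of $A_F^\p$ is immediate (since $m\prec_\p n$ forces $\bap(n)\subseteq\cbap(n)\subseteq\bap(m)$, so any witness for $n$ is also one for $m$), and $\p$-prunedness of $A_F^\p$ is the lemma applied with $U=\bap(n)$ and any witness $x\in\bap(n)\cap F$. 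The identity $F_{A_F^\p}^\p=F$ then splits cleanly: given $x\in F$, iterate the lemma to build a chain $(n_i)$ in $A_F^\p$ whose $\bap(n_i)$ shrink to $\{x\}$; conversely, given a chain $(n_i)$ in $A_F^\p$ with witnesses $y_i\in\bap(n_i)\cap F$, the shrinking diameters force $y_i\to\pi_\p((n_i))$, and closedness of $F$ delivers the limit point.

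Next, $A_{F_A^\p}^\p=A$. The inclusion $A\subseteq A_{F_A^\p}^\p$ is again a chain-building argument, using $\p$-prunedness of $A$ to extend any $n\in A$ to a chain whose $\pi_\p$-image lies in $\bap(n)\cap F_A^\p$. For the reverse, suppose $y\in\bap(n)\cap F_A^\p$ with $y=\pi_\p((m_i))$ for a chain in $A$. Since $\cbap(m_i)$ shrinks to $\{y\}$ inside the open set $\bap(n)$ and $m_i(1)\to\infty$, for $i$ large we have $n\prec_\p m_i$, and $\p$-closedness of $A$ yields $n\in A$. For Borel measurability, $F\mapsto A_F^\p$ is Borel directly from the definition of the Effros Borel structure, since $\{F:n\in A_F^\p\}=\{F:F\cap\bap(n)\neq\emptyset\}$ is a generator. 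For the inverse, I would establish the equivalence $F_A^\p\cap U\neq\emptyset \iff (\exists n\in A)\,\bap(n)\subseteq U$ for each open $U\subseteq X$---one direction by extending such $n$ via $\p$-prunedness, the other by shrinking a chain witness using that the $\bap(n)$ form a basis---and note that the right-hand side is a countable disjunction of coordinate conditions on $A\subseteq\omf$.

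For part 2), assume $F$ is perfect and $n\in A_F^\p$. Pick two distinct points $x\neq x'$ in $\bap(n)\cap F$ (possible since any point of $F$ in the open neighborhood $\bap(n)$ is a limit point of $F$) and apply the approximation lemma twice at a common depth $k$ chosen so small that $2^{-k-1}<d(x,x')/3$; this yields $u,v\in A_F^\p$ above $n$, centered near $x$ and $x'$ respectively, with $\cbap(u)\cap\cbap(v)=\emptyset$. Conversely, assume $A_F^\p$ is $\p$-perfect and $x\in F$. Part 1) supplies a chain $(n_i)$ in $A_F^\p$ with $\pi_\p((n_i))=x$; splitting each $n_i$ via $\p$-perfectness into $u_i,v_i$ with $\cbap(u_i)\cap\cbap(v_i)=\emptyset$, the point $x$ lies in at most one of $\cbap(u_i),\cbap(v_i)$, so a witness of $F$ in the other produces a point of $F\setminus\{x\}$ inside $\bap(n_i)$; since $\diam(\bap(n_i))\to 0$, these witnesses converge to $x$, so $x$ is a limit point of $F$. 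The one genuine obstacle in the whole proof is the quantitative bookkeeping inside the approximation lemma and its two-ball refinement; once those are in hand, every remaining step is a direct application.
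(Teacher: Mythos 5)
Your argument is correct; the paper offers no proof of this proposition (it is declared ``straightforward to check''), and your writeup via the approximation lemma, chain-building, and the generator characterization of the Effros Borel structure is exactly the intended routine verification. The only slip is the constant in part 2): closed balls of radius $2^{-k-1}<d(x,x')/3$ centered within $2^{-k-1}$ of $x$ and of $x'$ need not be disjoint (one needs $d(x,x')/4$), but this is immaterial since your approximation lemma already lets you place $\cbap(u)$ and $\cbap(v)$ inside prescribed disjoint open sets such as $B_d(x,d(x,x')/2)\cap\bap(n)$ and $B_d(x',d(x,x')/2)\cap\bap(n)$.
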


To finish the construction we will define a derivative on $P(\omf)$ and use the correspondence given in Proposition \ref{correspondence} to obtain a rank on $F(X)$, which is co-analytic when restricted to $F_{\aleph_0}(X)$.
 
 Define $D_\p\colon P(\omf) \rightarrow P(\omf)$ by 
\[D_\p(A)=\set{ n\in A \mid (\exists u,v\in A) \ n\prec_\p u, v \ \text{and} \ u\cw_\p v}.\]
 It is clear that $D_\p(A)\subseteq A$ and that $B\subseteq A$ implies $D_\p(B)\subseteq D_\p(A)$ for all $A,B\in P(\omf)$, hence $D_\p$ is a derivative on $P(\omf)$. Thus for each $A\in P(\omf)$ there is a least ordinal $\alpha <\omega_1$ such that $D_\p^\alpha(A)=D_\p^{\alpha+1}(A)$. We let $|A|_\p$ denote this least ordinal. 
 
It remains to prove the following theorem.  
 
\begin{thm}\label{RankDef} Let $\p=(X,d,\ol{x})$ be a presentation of a Polish space $X$. The map $\phi_\p\colon F_{\aleph_0}(X)\to \omega_1$ given by $F\mapsto |A_F^\p|_\p$ is a co-analytic rank.
\end{thm}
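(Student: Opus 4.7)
The plan is to reduce the claim to Theorem \ref{co-anrankfx} by viewing $P(\omf)$ as the Effros Borel space $F(Y)$ for $Y=\omf$ equipped with the discrete topology. Under this topology $Y$ is a countable (hence $\sigma$-compact) Polish space, every subset of $Y$ is closed, and the Effros Borel structure on $F(Y)$ coincides with the product Borel structure on $2^\omf = P(\omf)$ (the generators $\set{A\in F(Y)\mid A\cap U\neq \emptyset}$ reduce, when $U$ is a singleton, to the cylinders ``$n\in A$''). Since $D_\p$ is Borel and a derivative in the sense of the paper, Theorem \ref{co-anrankfx} applies and yields a co-analytic rank $A\mapsto |A|_\p$ on $\Omega_{D_\p}=\set{A\in P(\omf)\mid D_\p^\infty(A)=\emptyset}$. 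Pulling this rank back along the Borel map $F\mapsto A_F^\p$ from Proposition \ref{correspondence} then produces the required co-analytic rank $\phi_\p$ on $F_{\aleph_0}(X)$, provided we identify $F_{\aleph_0}(X)$ with the preimage of $\Omega_{D_\p}$.

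The essential step is therefore the equivalence
\[ F\in F_{\aleph_0}(X)\iff D_\p^\infty(A_F^\p)=\emptyset. \]
For the forward direction I would note that $\p$-closedness is preserved by $D_\p$ (using transitivity of $\prec_\p$) and by arbitrary intersections, so $D_\p^\infty(A_F^\p)$ inherits $\p$-closedness from $A_F^\p$; as a fixed point of $D_\p$ it is moreover $\p$-perfect, hence in particular $\p$-pruned. By Proposition \ref{correspondence} it therefore equals $A_G^\p$ for some closed perfect $G\subseteq F$. If $F$ is countable, so is $G$, and since a non-empty perfect Polish space is uncountable we must have $G=\emptyset$, i.e.\ $D_\p^\infty(A_F^\p)=\emptyset$. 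For the converse, if $F$ is uncountable the perfect kernel $P_F$ is non-empty, $A_{P_F}^\p$ is $\p$-perfect by Proposition \ref{correspondence}, and a short transfinite induction (using $\p$-perfectness at successors and intersection at limits) gives $A_{P_F}^\p\subseteq D_\p^\alpha(A_F^\p)$ for all $\alpha$, whence $D_\p^\infty(A_F^\p)\supseteq A_{P_F}^\p\neq \emptyset$.

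Granting the equivalence, the proof concludes by composing the defining $\Pi_1^1$ and $\Sigma_1^1$ relations of the co-analytic rank $A\mapsto |A|_\p$ on $\Omega_{D_\p}$ with the Borel map $F\mapsto A_F^\p$; this is the standard fact that a co-analytic rank pulls back to a co-analytic rank along a Borel map. Countability of $\omf$ ensures $|A|_\p<\omega_1$ for every $A\in P(\omf)$, so $\phi_\p$ genuinely takes values in $\omega_1$. The main obstacle is the displayed equivalence: one must carefully track which of the properties $\p$-closed, $\p$-pruned, $\p$-perfect survive the transfinite iteration of $D_\p$ and transport the fixed point back to a closed subset of $F$ via Proposition \ref{correspondence}. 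Once this is settled, the remainder is a direct appeal to Theorem \ref{co-anrankfx}.
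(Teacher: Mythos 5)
Your proposal is correct and follows essentially the same route as the paper: apply Theorem \ref{co-anrankfx} to the derivative $D_\p$ on $P(\omf)$ (viewed as the closed subsets of a countable discrete, hence $\sigma$-compact, space), identify $D_\p^\infty(A_F^\p)$ with the largest $\p$-perfect subset corresponding via Proposition \ref{correspondence} to the perfect kernel of $F$, and pull the resulting rank back along the Borel map $F\mapsto A_F^\p$. You merely spell out more explicitly the identification $P(\omf)\cong F(\omf_{\mathrm{disc}})$ and the two directions of the equivalence $F\in F_{\aleph_0}(X)\iff D_\p^\infty(A_F^\p)=\emptyset$, both of which the paper treats more tersely.
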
 
\begin{proof}
First, since $D_\p$ is continuous, it follows by Theorem \ref{co-anrankfx} that the set
\[ \Omega_\p = \set{ A\in P(\omf) \mid D_\p^{|A|_\p}(A)=\emptyset} \]
is co-analytic and that the map $A\mapsto |A|_\p$ is a co-analytic rank. By the definition of $D_\p$, we have that $D_\p^\alpha(A)$ is $\p$-closed for all $\alpha<\omega_1$. Moreover, $D_\p^{|A|_\p}(A)$ is the largest $\p$-perfect subset of $A$, in the sense that it contains any $\p$-perfect subset of $A$. Therefore, by Proposition \ref{correspondence}, $\pi_\p\left([D_\p^{|A|_\p}(A_F^\p)]_\p\right)$ must be the perfect kernel of $F$ for any $F\in F(X)$. So for any $F\in F(X)$ it holds that
\[F\in F_{\aleph_0}(X) \iff A_F^\p\in \Omega_\p.\] 
Hence, since the map $F\mapsto A_F^\p$ from $F(X)$ to $P(\omf)$ is Borel, we may conclude that $\phi_\p$ is a co-analytic rank on $F_{\aleph_0}(X)$.
\end{proof}

Since every Polish space $X$ admits many presentations, we obtain a huge family,
\[ \set{\phi_\p \mid \p \ \text{is a presentation of} \ X},\]
of co-analytic ranks on $F_{\aleph_0}(X)$. In the rest of this paper we will investigate the behaviour of this family for different classes of Polish spaces. 
\section{Dependence on presentation}
In the previous section we constructed a co-analytic rank $\phi_\p \colon F_{\aleph_0}(X)\rightarrow \omega_1$ for each presentation $\p=(X,d,\ol{x})$ of a Polish space $X$ . In this section, we investigate the extend to which the rank depends on the chosen presentation. We divide this investigation in two parts. 

First we consider the variations that occur when varying the dense sequence while holding the metric fixed. We isolate a class of Polish spaces for which the construction is completely independent of the dense sequence, and a broader class for which the ranks agree up to one step. In general the ranks can change more significantly, but we will recover a bound on this change.

Next we instead consider the variations that occur when varying the metric while fixing the dense sequence. It is clear that even a small change in the chosen metric can affect the induced rank. We will see that even for the discrete countable Polish space there exist presentations for which the rank of the whole space varies arbitrarily. We will also find a bound for the variation in the case where the two metrics are equivalent (in the strong sense).\\

A useful tool to compare the ranks induced by these presentations is the following simple lemma. The proof is straightforward.

\begin{lem}\label{homoless} Let $\p=(X,d,\ol{x})$ and $\s=(Y,\delta,\ol{y})$ be presentations of the Polish spaces $X$ and $Y$, respectively. If $A\subseteq \omf$ and $\psi \colon A\rightarrow \omf$ satisfies
\[n\pc_\p m \implies \psi(n)\pc_\s \psi(m)\quad \text{and} \quad n\cw_\p m\implies \psi(n)\cw_\s \psi(m)\]
for all $n,m\in A$, then $|A|_\p\leq |\psi(A)|_\s$.
\end{lem}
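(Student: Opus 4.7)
The plan is to prove, by transfinite induction on $\alpha<\omega_1$, the containment
\[
\psi(D_\p^\alpha(A))\subseteq D_\s^\alpha(\psi(A)),
\]
and then use it to conclude. The base case $\alpha=0$ is immediate since both sides equal $\psi(A)$. For the successor step, I would pick $n\in D_\p^{\alpha+1}(A)=D_\p(D_\p^\alpha(A))$ and obtain witnesses $u,v\in D_\p^\alpha(A)$ with $n\prec_\p u,v$ and $u\cw_\p v$; the two hypotheses on $\psi$ then give $\psi(n)\prec_\s \psi(u),\psi(v)$ and $\psi(u)\cw_\s \psi(v)$, while the inductive hypothesis places $\psi(u),\psi(v)$ inside $D_\s^\alpha(\psi(A))$. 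Hence $\psi(n)\in D_\s(D_\s^\alpha(\psi(A)))=D_\s^{\alpha+1}(\psi(A))$. For a limit ordinal $\lambda$, I would use only the general fact that $\psi$ commutes with intersections up to containment,
\[
\psi(D_\p^\lambda(A))=\psi\!\left(\bigcap_{\beta<\lambda}D_\p^\beta(A)\right)\subseteq\bigcap_{\beta<\lambda}\psi(D_\p^\beta(A))\subseteq\bigcap_{\beta<\lambda}D_\s^\beta(\psi(A))=D_\s^\lambda(\psi(A)).
\]

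To finish, set $\beta^{*}=|\psi(A)|_\s$, so that $D_\s^{\beta^{*}}(\psi(A))=D_\s^{\beta^{*}+1}(\psi(A))$. The containment just established gives $\psi(D_\p^{\beta^{*}}(A))\subseteq D_\s^{\beta^{*}}(\psi(A))$, an $\s$-stable set. In the situation that will actually arise in applications (the image $\psi(A)$ will be countable and hence sit in $\Omega_\s$, because any $\s$-perfect subset of $\omega^2$ yields a perfect subset of $X$ via $\pi_\s$), this $\s$-stable set is empty, which immediately forces $D_\p^{\beta^{*}}(A)=\emptyset$ and therefore $|A|_\p\leq\beta^{*}$, as desired. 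More generally, one runs the same style of argument on the maximal $\p$-perfect subset: $\psi$ maps any $\p$-perfect set into an $\s$-perfect set by the successor step applied verbatim, so the $\p$-perfect kernel of $A$ lands in the $\s$-perfect kernel of $\psi(A)$, and this is what pins $|A|_\p$ down by stage $\beta^{*}$.

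The only genuinely nontrivial point is the successor step, and the main subtlety there is that the preservation of $\prec$ and $\cw$ runs one way only, from $\p$ to $\s$. This is just enough because $D_\p$ is an existential condition on branching witnesses: one needs only to ship branching witnesses forward through $\psi$, never to pull $\s$-branching witnesses back to $\p$-branching witnesses.
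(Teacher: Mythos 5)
Your transfinite induction establishing $\psi(D_\p^\alpha(A))\subseteq D_\s^\alpha(\psi(A))$ is correct and is surely the ``straightforward'' argument the paper has in mind (the paper omits the proof entirely); just make explicit in the successor step that $\psi(n)$ itself lies in $D_\s^\alpha(\psi(A))$ --- which follows from the inductive hypothesis since $D_\p^{\alpha+1}(A)\subseteq D_\p^\alpha(A)$ --- before placing it in $D_\s\bigl(D_\s^\alpha(\psi(A))\bigr)$. What this induction genuinely yields is the implication $D_\s^\alpha(\psi(A))=\emptyset\implies D_\p^\alpha(A)=\emptyset$, and hence $|A|_\p\leq|\psi(A)|_\s$ whenever the $\s$-derivatives of $\psi(A)$ terminate at the empty set. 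That covers every invocation of the lemma in the paper, since it is only ever applied with $A=A_F^\p$ for countable closed $F$ and with $\psi(A)$ contained in some $A_{F'}^\s$ with $F'$ countable closed.

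Your attempt to dispose of the remaining case --- where $D_\s^{\beta^{*}}(\psi(A))$ is a nonempty $\s$-perfect set --- does not work, however. It is true that $\psi$ sends $\p$-perfect sets to $\s$-perfect sets and hence sends $D_\p^{\infty}(A)$ into $D_\s^{\infty}(\psi(A))$, but this says nothing about how many derivative steps $A$ needs to reach its own stable set: that is controlled by the elements of $A$ \emph{outside} $D_\p^{\infty}(A)$, and for such an $n$ of $\p$-rank $\gamma$ the inclusion only tells you $\psi(n)\in D_\s^{\gamma}(\psi(A))$. Nothing prevents $\psi(n)$ from landing inside $D_\s^{\infty}(\psi(A))$, where it contributes nothing to $|\psi(A)|_\s$, so the bound on $\gamma$ evaporates. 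To get $D_\p^{\beta^{*}}(A)=D_\p^{\beta^{*}+1}(A)$ you would need to pull the $\s$-witnesses of $\psi(n)$ back through $\psi$ to $\p$-witnesses for $n$, which the one-directional hypotheses do not permit --- exactly the asymmetry you yourself highlight in your closing remark. So either restrict the statement to the case $D_\s^{|\psi(A)|_\s}(\psi(A))=\emptyset$ (which is all the paper ever needs, and is what your first, correct argument proves), or supply a genuinely different argument for the nonempty-perfect-kernel case rather than asserting that the kernel inclusion ``pins down'' the rank.
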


 Note that $|A|_\p=|\psi(A)|_\s$ if we have bi-implications instead of implications in the lemma above.

\subsection{Change of dense sequence}
First we investigate what happens when we change the dense sequence. The objective is to isolate classes of Polish metric spaces for which the induced rank does not depend on the chosen sequence. 

\begin{prop}\label{indepdenseultra} Let $X$ be a Polish space and $d$ a compatible complete ultrametric on $X$. Then any pair of presentations $\p=(X,d,\ol{x})$ and $\s=(X,d,\ol{y})$ satisfies  $\phi_\p=\phi_\s$.
\end{prop}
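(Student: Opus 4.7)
The plan is to apply Lemma \ref{homoless} (in its bi-implication form) in both directions. The key ingredient is the standard ultrametric fact that if $d$ is an ultrametric and $z\in B_d(w,r)$, then $B_d(z,r)=B_d(w,r)$, and the same identity holds with closed balls in place of open ones; so open and closed balls of a fixed radius depend only on the equivalence class of their centers, not on any particular chosen center. Combined with density of $\ol{y}$, this says that every $\bap(n)$ is of the form $B_d(y_j,2^{-n(1)-1})$, and symmetrically in $\ol{x}$, so that the indexed families $\set{\bap(n)\mid n\in \omf}$ and $\set{\bas(n)\mid n\in \omf}$ enumerate exactly the same subsets of $X$.

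First I would define $\psi\colon \omf\to \omf$ by $\psi(i,k)=(j,k)$, where $j=\min\set{j'\in \omega\mid y_{j'}\in B_d(x_i,2^{-k-1})}$; this is well-defined by density of $\ol{y}$. The ultrametric identity immediately gives $\bap(n)=\bas(\psi(n))$ and $\cbap(n)=\cbas(\psi(n))$ for every $n\in \omf$. Since $\pc_\p$ and $\cw_\p$ are defined purely in terms of inclusions and intersections of $\p$-balls, and likewise for the $\s$-relations, one reads off
\[ n\pc_\p m \iff \psi(n)\pc_\s \psi(m), \qquad n\cw_\p m \iff \psi(n)\cw_\s \psi(m), \]
for all $n,m\in \omf$. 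In particular $\psi$ carries $A_F^\p$ into $A_F^\s$ for every $F\in F(X)$.

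Now fix $F\in F_{\aleph_0}(X)$. Restricting $\psi$ to $A_F^\p$ and appealing to the bi-implication form of Lemma \ref{homoless} gives $|A_F^\p|_\p = |\psi(A_F^\p)|_\s$. Since $\psi(A_F^\p)\subseteq A_F^\s$ and $A_F^\s\in \Omega_\s$, monotonicity of $D_\s$ forces $D_\s^\alpha(\psi(A_F^\p))\subseteq D_\s^\alpha(A_F^\s)$ for every $\alpha$, and in particular $|\psi(A_F^\p)|_\s\leq |A_F^\s|_\s$; hence $\phi_\p(F)\leq \phi_\s(F)$. Defining $\psi'\colon \omf\to \omf$ symmetrically, with the roles of $\ol{x}$ and $\ol{y}$ swapped, gives the reverse inequality, and we conclude $\phi_\p=\phi_\s$.

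The only genuinely non-routine step is the ultrametric identity for balls, which is immediate from the strong triangle inequality; once this is in hand the argument reduces to tracing definitions through the identifications $\bap(n)=\bas(\psi(n))$ and $\cbap(n)=\cbas(\psi(n))$.
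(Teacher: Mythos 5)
Your proof is correct and takes essentially the same route as the paper's: both use the ultrametric fact that a ball of fixed radius is independent of the choice of center inside it to produce a map $\psi\colon\omf\to\omf$ preserving $\pc$ and $\cw$ in both directions and carrying $A_F^\p$ into $A_F^\s$, and then conclude via Lemma \ref{homoless}, monotonicity of the derivative, and symmetry. (Your version is in fact slightly more careful in letting the chosen index $j$ depend on the radius coordinate $k$ as well as on $i$.)
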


\begin{proof} First, note that, since $d$ is an ultrametric, we have
\[ \set{ \bap((i,j)) \mid i\in \omega } = \set{ \bas((i,j)) \mid i\in \omega}\]
for each $j\in \omega$. Therefore we can choose $f\colon \omega \to \omega$ such that $\bap((i,j))=\bas((f(i),j))$ for all $i\in \omega$, and define $\psi \colon \omf \to \omf$ by $\psi(n)=(f(n(0)),n(1))$. Note that $n\pc_\p m \iff \psi(n)\pc_\s \psi(m)$ and $n\cw_\p m \iff \psi(n)\cw_\s \psi(m)$ for all $n,m\in \omf$. 
Now let $F\in F_{\aleph_0}(X)$. Since $\psi(A_F^\p)\subseteq A_F^\s$, it follows by Lemma \ref{homoless} that $\phi_\p(F)\leq \phi_\s(F)$. By symmetry we conclude $\phi_\p=\phi_\s$, as wanted.

\end{proof}

Next we will see that for compact Polish spaces the ranks only depend on the chosen metric. We will need the following analogue of Königs Lemma. 
 
\begin{lem}\label{Konig} Let $X$ be compact, $\p=(X,d,\ol{x})$ a presentation of $X$ and let $A\subseteq \omf$ be $\p$-closed. If there is $(n_k)_k\in A$ such that $n_k(1)\rightarrow \infty$ as $k\rightarrow \infty$, then $[A]_\p\neq \emptyset$.
\end{lem}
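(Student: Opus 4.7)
The plan is to use the compactness of $X$ to extract a limit from the shrinking closed balls $\cbap(n_k)$, and then to build a $\p$-compatible chain in $A$ converging to that limit using the density of $(x_i)_i$. Since $X$ is compact, the sequence of centres $(x_{n_k(0)})_k$ admits a convergent subsequence $x_{n_{k_j}(0)}\to x$, and the radii $2^{-n_{k_j}(1)-1}$ tend to $0$ because $n_{k_j}(1)\to\infty$; informally the balls $\cbap(n_{k_j})$ collapse to $\{x\}$.

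I would then construct $(m_l)_{l\in\omega}\in[A]_\p$ by setting $m_l=(i_l,l)$, where $i_l$ is chosen, via the density of $(x_i)_i$, so that $d(x,x_{i_l})<2^{-l-3}$. Two things need checking. First, the nesting $m_l\pc_\p m_{l+1}$: the second coordinate strictly increases, and for $y\in\cbap(m_{l+1})$ the triangle inequality yields $d(y,x_{i_l})\leq 2^{-l-2}+2^{-l-4}+2^{-l-3}<2^{-l-1}$, so $\cbap(m_{l+1})\subseteq\bap(m_l)$. Second, and less trivially, $m_l\in A$: this I would obtain from $\p$-closedness after verifying $m_l\pc_\p n_{k_j}$ for some (hence for all sufficiently large) $j$. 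Choosing $j$ so that simultaneously $n_{k_j}(1)>l$, $d(x_{n_{k_j}(0)},x)<2^{-l-4}$, and $2^{-n_{k_j}(1)-1}<2^{-l-4}$ (all feasible by the defining properties of the subsequence), a similar triangle inequality estimate bounds $d(y,x_{i_l})$ by $2^{-l-2}<2^{-l-1}$ for every $y\in\cbap(n_{k_j})$, giving the required containment $\cbap(n_{k_j})\subseteq\bap(m_l)$.

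I expect the only real difficulty to be the bookkeeping of constants: the $2^{-l-3}$ margin on $d(x,x_{i_l})$ has to absorb simultaneously the $2^{-l-2}$ radius of $\cbap(m_{l+1})$ (for the nesting) and the three small errors appearing in the $\p$-closedness check via $n_{k_j}$ (for membership in $A$). Once these choices are made compatibly, the construction delivers an element $(m_l)_l\in[A]_\p$ directly, which shows $[A]_\p\neq\emptyset$ as desired.
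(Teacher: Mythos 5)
Your proof is correct and follows essentially the same route as the paper: compactness yields a limit point of the shrinking balls $\cbap(n_k)$, and $\p$-closedness together with a triangle-inequality comparison against a far-enough tail term $n_{k_j}$ puts each chosen $m_l$ into $A$. The only difference is cosmetic — you explicitly build the compatible chain $(m_l)_l\in[A]_\p$ around the limit point, a step the paper leaves implicit — and your constants do work out ($2^{-l-2}+2^{-l-4}+2^{-l-3}<2^{-l-1}$).
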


\begin{proof}
Assume there is $(n_k)_k\in A$ such that $n_k(1)\rightarrow \infty$ as $k\rightarrow \infty$, and let $z_k\in \bap(n_k)$ for all $k\in \omega$. Then, by compactness of $X$, there is $z\in X$ and a subsequence $(z_{k_i})_i\subseteq (z_k)_k$ such that $z_{k_i}\rightarrow z$ as $i\rightarrow \infty$. 
To prove $z\in [A]_\p$, assume that $z\in \bap(m)$ for some $m\in \omf$. Then there is $\epsilon >0$ and $N\in \omega$ such that $B_d(z,\epsilon)\subseteq \bap(m)$ and $z_{k_i}\in B_d(z,\epsilon/3)$ for all $i\geq N$. Moreover, as $\diam_d(\bap(n_k))\rightarrow 0$ as $k\rightarrow \infty$, we may choose $M\geq N$ such that $\diam_d(\bap(n_{k_j}))<\epsilon /3$ for all $j\geq M$. Then $m\pc n_{k_j}$ for any $j\geq M$ and hence, since $A$ is $\p$-closed, we conclude that $m\in A$.
\end{proof}

\begin{remark}\label{remkonig} The argument in the proof of Lemma \ref{Konig} can also be used to prove the following statement. Let $\p$ be a presentation of any Polish space $X$, let $F\in F_{\aleph_0}(X)$ and $n\in A_F^\p$ with $\ol{\bap(n)\cap F}$ compact. Assume $(n_k)_k\in A_F^\p$ satisfy that $n\pc_\p n_k$ for each $k\in \omega$ and that $n_k(1)\rightarrow \infty$ as $k\rightarrow \infty$. Then there exists $x\in \ol{\bap(n)\cap F}$ such that for any $U\subseteq X$ open with $x\in U$ there is $N\in \omega$ such that $\cbap(n_k)\subseteq U$ for all $k\geq N$.

\end{remark}

We will now prove that for a class of Polish spaces, which includes the compact Polish spaces, the rank only depends on the chosen metric. We call a metric \textbf{proper} if all the closed balls are compact. 
\begin{thm}\label{indepdenseHB} Let $X$ be a Polish space and $d$ a complete compatible proper metric on $X$. Then any pair of presentations $\p=(X,d,\ol{x})$ and $\s=(X,d,\ol{y})$ satisfy  $\phi_\p=\phi_\s$.
\end{thm}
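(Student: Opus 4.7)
By symmetry, it suffices to prove $\phi_\p(F)\le\phi_\s(F)$ for every $F\in F_{\aleph_0}(X)$. Because the centres $x_i$ and $y_j$ are unrelated, no canonical level-preserving map between $\omf$-indices exists, and one cannot directly apply Lemma~\ref{homoless} in the style of Proposition~\ref{indepdenseultra}. Instead, I would prove by transfinite induction on $\alpha<\omega_1$ the translation statement
\[
(\ast_\alpha):\quad \forall n\in A_F^\p,\ \ n\in D_\p^\alpha(A_F^\p)\ \Longrightarrow\ \exists\, m\in D_\s^\alpha(A_F^\s)\ \text{with}\ \cbas(m)\subseteq \bap(n).
\]
Applied at $\alpha=\phi_\s(F)$, the right-hand side is vacuous, forcing $D_\p^\alpha(A_F^\p)=\emptyset$ and giving $\phi_\p(F)\le\phi_\s(F)$.

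\paragraph{Base and successor steps.} For $\alpha=0$: given $n\in A_F^\p$, pick $z\in \bap(n)\cap F$ and use density of $\ol y$ together with openness of $\bap(n)$ to produce $m=(j,l)\in\omf$ with $z\in\bas(m)$ and $\cbas(m)\subseteq \bap(n)$. For $\alpha+1$: extract $u,v\in D_\p^\alpha$ with $n\pc_\p u,v$ and $u\cw_\p v$; the inductive hypothesis yields $m_u,m_v\in D_\s^\alpha$ inside $\bap(u),\bap(v)$, and $\cbap(u)\cap\cbap(v)=\emptyset$ passes to $m_u\cw_\s m_v$. Since $\cbas(m_u)\cup\cbas(m_v)$ is a compact subset of the open ball $\bap(n)$ (properness), density of $\ol y$ together with a dyadic choice of radius produces $m\in\omf$ with $\cbas(m_u)\cup\cbas(m_v)\subseteq\bas(m)$ and $\cbas(m)\subseteq\bap(n)$; then $m\pc_\s m_u,m_v$ together with $m_u\cw_\s m_v$ witness $m\in D_\s^{\alpha+1}$. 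To guarantee enough dyadic room in this last step, and to prepare for the limit step, one strengthens $(\ast_\alpha)$ so that $m$ may additionally be required to have $\bas(m)$ of arbitrarily small prescribed diameter lying inside any preassigned open $V\subseteq\bap(n)$ with $\ol V\subseteq\bap(n)$; this strengthening propagates through the constructions above provided one first refines $u,v$ to sufficiently small sub-$\p$-balls clustered inside $V$, which is possible via iterated splitting coming from $n\in D_\p^{\alpha+k}$ for $k\gg 0$ when needed.

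\paragraph{Limit case, the main obstacle.} For a limit $\lambda$, the inductive hypothesis delivers $m_\beta\in D_\s^\beta$ inside $\bap(n)$ for each $\beta<\lambda$, but a descending intersection of non-empty subsets of the countable set $\{m:\cbas(m)\subseteq\bap(n)\}$ can a priori be empty, so producing a single $m\in D_\s^\lambda$ is the main obstacle---and is precisely where properness of $d$ is needed. Pick a cofinal sequence $\beta_k\nearrow\lambda$; since $\lambda$ is a limit, $n\in D_\p^{\beta_k+k}$ for each $k$, which furnishes $k$ pairwise $\cw_\p$-disjoint sub-$\p$-balls of $n$ in $D_\p^{\beta_k}$. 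Applying the strengthened $(\ast_{\beta_k})$ to these with shrinking diameter thresholds inside a common compactly-contained sub-region $V$ of $\bap(n)$, one obtains pairwise $\cw_\s$-disjoint $m_{\beta_k}\in D_\s^{\beta_k}$ with $\cbas(m_{\beta_k})\subseteq V$ and $\s$-levels tending to infinity. Remark~\ref{remkonig}, applicable thanks to compactness of $\cbap(n)$, yields an accumulation point $x\in \ol V\cap F\subseteq\bap(n)\cap F$ into every neighbourhood of which the $\cbas(m_{\beta_k})$ eventually fall. A small $\s$-ball $\bas(m)$ around $x$ with $\cbas(m)\subseteq\bap(n)$ then satisfies $m\in D_\s^\gamma$ for every $\gamma\le\lambda$ by a routine induction on $\gamma$: at successor $\gamma+1$, two of the cofinally many disjoint $m_{\beta_k}$ with $\beta_k>\gamma$ that already lie inside $\bas(m)$ serve as $\cw_\s$-splitting witnesses; at limits one invokes the definition of $D_\s^\mu$ as an intersection. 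Hence $m\in D_\s^\lambda$, closing the induction and the proof.
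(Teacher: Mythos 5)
Your overall architecture (transfinite induction transferring witnesses from $D_\p^\alpha(A_F^\p)$ to $D_\s^\alpha(A_F^\s)$, using properness for the successor step and a K\"onig-type compactness argument at limits) is the same as the paper's, and your limit case is essentially the paper's. But your inductive statement $(\ast_\alpha)$ is too strong, and the successor step breaks exactly where you wave at ``a dyadic choice of radius.'' Requiring $\cbas(m)\subseteq\bap(n)$ is not achievable in general: take $X=\R$, $\bap(n)=(a,b)$ of radius $r=2^{-n(1)-1}$, and $F\cap(a,b)=\set{a+\delta,\,b-\delta}$ with $0<\delta<r/2$. Then $n\in D_\p(A_F^\p)$, but any $m\in D_\s(A_F^\s)$ with $\cbas(m)\subseteq(a,b)$ would need $\bas(m)$ to contain both points of $F\cap(a,b)$ (its two $\cw_\s$-disjoint witnesses each meet $F$ inside $\bas(m)\subseteq(a,b)$), forcing its radius into the interval $(r-\delta,\,r)$ -- which contains no dyadic value $2^{-k-1}$. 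So $(\ast_1)$ already fails. The same obstruction kills your successor step: a single $\s$-ball containing $\cbas(m_u)\cup\cbas(m_v)$ may need radius arbitrarily close to, but strictly less than, that of $\bap(n)$, and no such dyadic radius exists.

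Your proposed repair -- strengthening $(\ast_\alpha)$ to produce $m$ with $\bas(m)$ of arbitrarily small diameter inside a preassigned $V$ -- makes the statement strictly more false (in the example above every eligible $m$ has $\diam(\bas(m))\geq 2r-2\delta$), and the mechanism offered to propagate it does not exist: at the successor step you only know $n\in D_\p^{\alpha+1}$, and splitting a witness $u\in D_\p^\alpha$ yields balls in $D_\p^{\alpha-1}$, so ``iterated splitting'' cannot shrink the witnesses while keeping them at level $\alpha$. The correct fix, which is what the paper does, is to weaken rather than strengthen the inductive statement: ask only for $m\in D_\s^\alpha(A_F^\s)$ with $m(1)\geq n(1)$ and $\cbas(m)\subseteq B_d(x_{n(0)},2^{-n(1)-1}+\epsilon)$, i.e.\ an $\s$-ball at the \emph{same} dyadic level as $n$ with centre $\epsilon$-close to $x_{n(0)}$, allowed to protrude slightly outside $\bap(n)$. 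Properness is then used in the successor step to find $\rho>0$ so that the $\rho$-enlargements of $\cbap(u)$ and $\cbap(v)$ remain disjoint and inside $B_d(x_{n(0)},2^{-n(1)-1}-\rho)$, which absorbs the $\epsilon$-slack coming from the induction hypothesis. With that modification your base, successor and limit steps go through as in the paper.
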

\begin{proof} 
Let $F\in F_{\aleph_0}(X)$. First, we prove by induction on $\alpha<\omega_1$ that if $n\in D_\p^\alpha(A_F^\p)$ and $\epsilon>0$ , then there is $m\in D^\alpha_\s(A_F^\s)$ such that \[\cbas(m)\subseteq B_d( x_{n(0)},2^{-n(1)-1}+\epsilon)\]
and $m(1)\geq n(1)$.

For $\alpha = 0$ the statement is easily seen to be true. 

Assume the statement holds for some $\alpha<\omega_1$. Let $n\in D^{\alpha+1}_\p(A_F^\p)$ and let $\epsilon>0$ be given. Then there is $u,v\in D_\p^\alpha(A_F^\p)$ such that $n\pc_\p u,v$ and $u\cw_\p v$. The compactness of $\cbap(n)$ now implies that there exists $\rho>0$  such that 
\[B_d(x_{u(0)},2^{-u(1)-1} + \rho),B_d(x_{v(0)},2^{-v(1)-1} + \rho)\subseteq B_d(x_{n(0)},2^{-n(1)-1} - \rho)\] and \[B_d(x_{u(0)},2^{-u(1)-1} + \rho)\cap B_d(x_{v(0)},2^{-v(1)-1} + \rho)=\emptyset.\] 
By the induction hypothesis there is $\tilde{v},\tilde{u}\in D_\s^\alpha(A_F^\s)$ such that
\[\cbas(\tilde{u})\subseteq B_d(x_{u(0)},2^{-u(1)-1}+\rho),\qquad \cbas(\tilde{v})\subseteq B_d(x_{v(0)},2^{-v(1)-1}+\rho),\]
$\tilde{u}(1)\geq u(1)$ and $\tilde{v}(1)\geq v(1)$.
Now choose $m\in \omega^2$ such that $d(x_{n(0)},y_{m(0)})<\min\set{\epsilon, \rho}$ and  $m(1)=n(1)$. Then $m\pc_\s \tilde{u},\tilde{v}$ and $\tilde{u}\cw_\s \tilde{v}$. Therefore $m\in D_\s^{\alpha+1}(A_F^\s)$, as wanted. 

Finally, assume that the statement holds for all $\beta <\lambda$ for some limit ordinal $\lambda <\omega_1$.
Let $n\in D_\p^\lambda(A_F^\p)$ and let $\epsilon>0$ be given. Now choose $(\beta_k)_k\in \lambda$ such that $\bigcup_{k\in \omega}\beta_k=\lambda$. Then there must exist $(n_k)_k\in A_F^\p$ satisfying $n\pc_\p n_k$, $n_k\in D^{\beta_k}_\p(A_F^\p)$ and $n_k(1)\rightarrow \infty$ as $k\rightarrow \infty$.  Therefore we may let $z\in \ol{\bap(n)\cap F}$ satisfy the statement of Remark \ref{remkonig}. Now choose $m\in \omega^2$ such that $m(1)\geq n(1)$, $2^{-m(1)-1}<\epsilon/2$ and $z\in \bas(m)$. Then there is $N\in \omega$ such that $\cbap(n_k)\subseteq \bas(m)$ for all $k\geq N$. So, by use of compactness of $\cbas(m)$ and the induction hypothesis as above, we deduce that there exist $m_k\in D^{\beta_k}_\s(A_F^\s)$ with $m\pc_\s m_k$ for all $k\geq N$. Therefore we have $m\in D^\lambda(A_F^\s)$.\\

We have now established that if for some $\alpha<\omega_1$ we have $n\in D_\p^\alpha(A_F^\p)$, then there is $m\in D^\alpha_\s(A_F^\s)$. Therefore $D^\alpha_\s(A^\s_F)=\emptyset$ implies $D_\p^\alpha(A^\p_F)=\emptyset$ for all $\alpha<\omega_1$. By symmetry we conclude $\phi_\p=\phi_\s$.

\end{proof}

The next proposition isolates the two most important properties used in the proof above. 

\begin{prop}\label{indepdensecriter} Let $X$ be a Polish space and $d$ a complete compatible metric on $X$ such that for all $x,y\in X$ and $\epsilon,\xi>0$ we have that
\begin{itemize}
\item[1)] $\ol{B}_d(x,\epsilon)\subseteq B_d(y,\xi)$ implies that there exists $\rho>0$ such that $$B_d(x,\epsilon+\rho)\subseteq B_d(y,\xi-\rho).$$
\item[2)] $\ol{B}_d(x,\epsilon)\cap \ol{B}_d(y,\xi)=\emptyset$ implies that there exists $\rho>0$ such that $$ B_d(x,\epsilon+\rho)\cap B_d(y,\xi + \rho)=\emptyset.$$
\end{itemize}
For any pair of presentations $\p=(X,d,\ol{x})$ and $\s=(X,d,\ol{y})$ we have $\phi_\p(F)\leq \phi_\s(F)+1$ for all $F\in F_{\aleph_0}(X)$.
\end{prop}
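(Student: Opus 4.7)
The plan is to mirror the proof of Theorem~\ref{indepdenseHB} wherever possible, using conditions~(1) and~(2) as substitutes for the compactness of closed balls that powered the proper-metric argument; the loss of compactness will cost exactly one derivative step, paid at the limit ordinals of the recursion. Fix $F\in F_{\aleph_0}(X)$. I prove by transfinite induction on $\beta<\omega_1$ the statement $P(\beta)$: for every $n\in D_\p^{\beta+1}(A_F^\p)$ and every $\epsilon>0$ there exists $m\in D_\s^\beta(A_F^\s)$ with $m(1)\geq n(1)$ and $\cbas(m)\subseteq B_d(x_{n(0)},2^{-n(1)-1}+\epsilon)$. Taking $\beta=\phi_\s(F)$ and using $D_\s^\beta(A_F^\s)=\emptyset$ then forces $D_\p^{\beta+1}(A_F^\p)=\emptyset$, i.e.\ $\phi_\p(F)\leq\phi_\s(F)+1$.

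A preliminary observation streamlines both the successor and the limit step: if $\tilde u,\tilde v\in D_\s^\gamma(A_F^\s)$ with $m\prec_\s\tilde u,\tilde v$ and $\tilde u\cw_\s\tilde v$, then $m\in D_\s^{\gamma+1}(A_F^\s)$. This is immediate by a short inner induction on $\gamma$; the key remark is that $m\prec_\s\tilde u$ and $\tilde u\in A_F^\s$ already force $m\in A_F^\s$, handling the base case, while the successor and limit steps just unpack the definition of $D_\s$. With this in hand, the base case $\beta=0$ of $P$ reduces to picking $z\in\bap(n)\cap F$ (available since $n\in D_\p^1\subseteq A_F^\p$) and selecting a sufficiently fine $\s$-ball around $z$ of level $\geq n(1)$ whose closed version sits inside a slight enlargement of $\bap(n)$. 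The successor case $\beta=\gamma+1$ runs as in Theorem~\ref{indepdenseHB}: from $u,v\in D_\p^{\gamma+1}(A_F^\p)$ with $n\prec_\p u,v$ and $u\cw_\p v$, conditions~(1) and~(2) supply $\rho\in(0,\epsilon)$ so that $B_d(x_{u(0)},2^{-u(1)-1}+\rho)$ and $B_d(x_{v(0)},2^{-v(1)-1}+\rho)$ are disjoint and both contained in $B_d(x_{n(0)},2^{-n(1)-1}-\rho)$; invoking $P(\gamma)$ with error $\rho$ gives $\tilde u,\tilde v\in D_\s^\gamma(A_F^\s)$ inside these enlargements, and any $m=(j,n(1))$ with $d(x_{n(0)},y_j)<\rho$ then satisfies, by two triangle-inequality estimates, both $m\prec_\s\tilde u,\tilde v$ and $\cbas(m)\subseteq B_d(x_{n(0)},2^{-n(1)-1}+\epsilon)$. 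The preliminary observation now puts $m$ in $D_\s^\beta(A_F^\s)$.

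The limit case $\beta=\lambda$ is where the step of slack is actually spent. Given $n\in D_\p^{\lambda+1}(A_F^\p)$ and $\epsilon>0$, extract $u,v\in D_\p^\lambda(A_F^\p)$ and $\rho$ exactly as above; the crucial move is to commit to a single $m=(j,n(1))$ with $d(x_{n(0)},y_j)<\rho/2$ \emph{before} using the induction hypothesis. For each $\gamma<\lambda$, applying $P(\gamma)$ (legitimate because $u,v\in D_\p^\lambda\subseteq D_\p^{\gamma+1}$) with error $\rho$ produces $\tilde u_\gamma,\tilde v_\gamma\in D_\s^\gamma(A_F^\s)$ inside the same $\rho$-enlargements, and the same triangle estimates yield $m\prec_\s\tilde u_\gamma,\tilde v_\gamma$ and $\tilde u_\gamma\cw_\s\tilde v_\gamma$. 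By the preliminary observation, $m\in D_\s^{\gamma+1}(A_F^\s)$ for every $\gamma<\lambda$, and since $\lambda$ is a limit this forces $m\in D_\s^\lambda(A_F^\s)$.

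The main obstacle is precisely this limit step: in Theorem~\ref{indepdenseHB} the required convergence of centers was supplied by compactness via Remark~\ref{remkonig}, and that route is closed here. The remedy is to pin $y_{m(0)}$ near $x_{n(0)}$ once and for all and then use conditions~(1) and~(2) (via the buffer $\rho$) to tighten the IH-approximations $\tilde u_\gamma,\tilde v_\gamma$ around $x_{u(0)}$ and $x_{v(0)}$ uniformly in $\gamma<\lambda$. This manoeuvre is what buys the bound $\phi_\p(F)\leq\phi_\s(F)+1$ and explains why the loss is exactly one rank step rather than none (as in Theorem~\ref{indepdenseHB}) or arbitrarily many.
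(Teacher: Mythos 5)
Your proof is correct and follows essentially the same route as the paper: the same shifted induction hypothesis, the same use of conditions (1) and (2) to manufacture the buffer $\rho$, and the same mechanism (that the iterated $\s$-derivatives of $A_F^\s$ are $\s$-closed) to place the chosen $m$ in the right derivative. The only cosmetic difference is in the limit step, where the paper applies the induction hypothesis to a single $\tilde n\in D_\p^\lambda(A_F^\p)$ above $n$ and concludes $m\in D_\s^\beta(A_F^\s)$ directly from $m\prec_\s m_\beta$, whereas you run the successor-style argument with both witnesses $u,v$ for each $\gamma<\lambda$; both yield $m\in D_\s^\lambda(A_F^\s)$.
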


\begin{proof}Let $F\in F_{\aleph_0}(X)$. As before, we prove by induction on $\alpha<\omega_1$ that for all $n\in D^{\alpha+1}_\p(A_F^\p)$ and $\epsilon>0$ there is $m\in D^\alpha_\s(A_F^\s)$ with $\cbas(m)\subseteq B_d(x_{n(0)},2^{-n(1)-1}+\epsilon)$ and $m(1)\geq n(1)$.

For $\alpha=0$ and the successor case is done exactly as before. So let us briefly argue for the limit case. Assume the statement holds for all $\beta<\lambda$ for some limit $\lambda<\omega_1$. Let $n\in D_\p^{\lambda+1}(A_F^\p)$ and $\epsilon>0$ be given. Then there is $\tilde{n}\in D^\lambda_\p(A_F^\p)$ with $n\pc_\p \tilde{n}$. So there must exist $0<\rho<\epsilon$ such that
\[B_d(x_{\tilde{n}(0)},2^{-\tilde{n}(1)-1}+\rho )\subseteq B_d(x_{n(0)},2^{-n(1)-1}-\rho ).\]
Then, since $\tilde{n}\in D^{\beta+1}_\p(A_F^\p)$ for all $\beta<\lambda$, it follows by the induction hypothesis that there is $m_\beta\in D^\beta_\s(A_F^\s)$ with $m_\beta(1)\geq \tilde{n}(1)$ and
\[\cbas(m_\beta)\subseteq B_d(x_{\tilde{n}(0)},2^{-\tilde{n}(1)-1}+\rho)\subseteq B_d(x_{n(0)},2^{-n(1)-1}-\rho)\]
for all $\beta<\omega_1$. Now choose $m\in \omf$ such that $d(x_{n(0)},y_{m(0)})<\min\set{\epsilon,\rho}$ and $m(1)=n(1)$. Then one easily checks that $\cbas(m_\beta)\subseteq \bap(m)$ and $m(1)<m_\beta(1)$, hence $m\pc_\s m_\beta$ for all $\beta<\omega_1$. So we must have $m\in D^\lambda_\s(A_F^\s)$ as wanted.
\end{proof}
\begin{remark} The proposition above includes all separable Banach spaces and the Urysohn space.
\end{remark}

The following example shows that the conclusion of Proposition \ref{indepdensecriter} is optimal.

\begin{example} Let $H$ be the real infinite dimensional separable Hilbert space. Denote by $(e_i)_i$ an orthonormal basis and by $d$ the metric induced by the inner product.  Consider the presentations $\p=(H,d,\ol{x})$ and $\s=(H,d,\ol{y})$, where $\ol{x}$ is an enumeration of the dense subset
\[D=\set{ \sum_{i\leq n}\lambda_ie_i \mid n\in \omega, \lambda_i\in \Q}\]
and $\ol{y}$ is an enumeration of $D\setminus \set{0}$. 
For each $i\in \omega$ let 
\[F_i\subseteq \set{ \lambda e_i \mid \lambda\in \left[\frac{1}{2}-\frac{1}{2^{i+1}},\frac{1}{2}\right)}\] be finite with $\phi_\s(F_i),\phi_\p(F_i)\geq i$. Then put $F=\bigcup_{i\in \omega}F_i$. We must have $\phi_\s(F)\leq \phi_\p(F)$ and $D_\p^{\omega+1}(A_F^\p)=\emptyset$. Moreover $n\in D_\p^\omega(A_F^\p)$ if and only if $x_{n(0)}=0$ and $n(1)=0$, hence we conclude $\phi_\s(F)=\omega$ and $\phi_\p(F)=\omega+1$.
\end{example}

If we instead consider general Polish spaces, more variation can occur.

\begin{thm}\label{changingsequence}There exist a Polish space $X$,  presentations $\p=(X,d,\ol{x})$, $\s=(X,d,\ol{y})$ of $X$ and $F\in F_{\aleph_0}(X)$ such that $\phi_\p(F)=\omega+1$ and $\phi_\s(F)=2$.
\end{thm}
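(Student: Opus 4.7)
The plan is to exhibit an explicit example. Proposition~\ref{indepdensecriter} guarantees $|\phi_\p(F)-\phi_\s(F)|\leq 1$ whenever $X$ satisfies conditions $(1)$ and $(2)$, so the desired gap of $\omega-1$ requires a Polish space in which $(2)$ fails at a cofinal family of configurations. My approach is to extend the Hilbert-space construction following Proposition~\ref{indepdensecriter} --- where the one-step gap arises from $0\in\ol{x}\setminus\ol{y}$ --- by cascading this phenomenon over countably many scales, concentrating them at a single accumulation point that absorbs them into the ordinal $\omega$.

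First, I would take $X$ to be a non-proper $G_\delta$ subspace of a separable Banach space, for instance $X=\ell^2\setminus\{c_n:n\in\omega\}$, where the ``virtual centers'' $c_n$ accumulate at a distinguished point $z\in X$ (e.g.\ $z=0$ and $c_n=(1/2^n)e_n$). At each removed point $c_n$ condition $(2)$ of Proposition~\ref{indepdensecriter} fails: pairs of closed balls in $X$ touching only at $c_n$ cannot be strictly expanded while remaining disjoint.

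Inside $X$ I would attach near each $c_n$ a finite cluster $F_n\subseteq X$, designed so that a $\p$-ball of radius roughly $2^{-k_n}$ centered near $c_n$ exactly captures $F_n$ and admits an internal derivative recursion of depth at least $n$ (mirroring the internal structure of the $F_i$ in the Hilbert-space example). Set $F=\{z\}\cup\bigcup_{n\in\omega} F_n$, a countable closed subset of $X$ whose only topological accumulation point is $z$. Choose $\ol{x}$ dense in $X$, containing $z$ and approximating each $c_n$ at the right scale; the standard level-by-level computation then yields $\phi_\p(F_n)\geq n$, and the ball $\bap((i_0,0))$ with $x_{i_0}=z$ witnesses $D_\p^{\omega}(A_F^\p)\neq\emptyset$. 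A further application of $D_\p$ kills this --- any two $\cw_\p$-disjoint refinements of such a ball must both contain $z$ --- yielding $\phi_\p(F)=\omega+1$.

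For $\ol{y}$, I would take a dense sequence in $X$ that avoids $z$ and approximates each $c_n$ only at positions misaligned with the discrete set of radii $\{2^{-j-1}\}$. The failure of $(2)$ near each $c_n$, together with $c_n\notin X$, is then used to force the dichotomy that every $\s$-ball approximating $F_n$ is either too small to contain more than one point of $F_n$, or too large, in which case it swallows $z$ and infinitely many other clusters. Consequently $D_\s(A_F^\s)$ is contained in the collection of balls containing $z$, so $D_\s^2(A_F^\s)=\emptyset$ by the standard disjointness argument, and $\phi_\s(F)=2$. The main technical obstacle is precisely this dichotomy: showing rigorously that the interplay between the discrete radii $\{2^{-j-1}\}$, the placements of the holes $c_n$, and the internal geometry of the clusters $F_n$ prevents any choice of $y_k\in\ol{y}$ and $j\in\omega$ from producing an $\s$-ball that cleanly isolates $F_n$'s internal structure without also capturing $z$.
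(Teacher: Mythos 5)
Your high-level blueprint matches the paper's: a distinguished point ($z$, resp.\ $0\in[0,1]$ in the paper) that lies in $\ol{x}$ but not $\ol{y}$, serving as the center of one large ball that sees infinitely many clusters $F_n$ of unbounded $\p$-rank, which yields $\phi_\p(F)=\omega+1$, while the $\s$-rank is supposed to collapse to $2$. But there is a genuine gap, and it sits exactly where you locate it: the collapse $\phi_\s(F)=2$ is asserted, not proved, and in the ambient space you propose it appears to be false. If each cluster $F_n$ carries an internal derivative recursion of depth $n$, it must contain pairs of points at scales far smaller than its diameter, hence far smaller than $d(F_n,z)$. Since $\ol{y}$ is dense in $X$ and the available radii $2^{-j-1}$ realize every scale up to a factor of $2$, for any such pair $a,b$ there is an $\s$-ball centered near their midpoint of radius comparable to $d(a,b)$ that contains both but not $z$; ``misaligning'' the radii cannot prevent this. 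So $D_\s$ reproduces the internal recursion of each $F_n$ rather than killing it --- indeed, Proposition~\ref{indepdensecriter} applies to separable Banach spaces precisely because of this midpoint phenomenon, and deleting countably many points does not destroy it. A second, more technical problem: the restriction of the $\ell^2$ metric to $X=\ell^2\setminus\{c_n\}$ is not complete, so $(X,d,\ol{x})$ is not a presentation; any complete compatible metric must distort neighborhoods of the deleted points $c_n$, which invalidates the Euclidean reasoning about balls ``touching at $c_n$.''

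The paper circumvents exactly this obstacle by abandoning subspaces of Banach spaces altogether and hand-building tree-like metric spaces $X_k$: each contains auxiliary convergent-type sequences $(z_l^s)_l$ arranged so that \emph{every} ball centered at a point of the second dense sequence that captures a sibling pair of $F_k$ necessarily swallows tails of all the $z_l^s$, forcing any two such balls to intersect and hence $D_{\s_k}^2(A_{F_k}^{\s_k})=\emptyset$; the first dense sequence retains the tree's branch points $w_0^s$, which recover the full depth-$k$ recursion. Without an analogous mechanism that blocks intermediate-scale separating balls in the $\s$-presentation, your construction does not close.
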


\begin{proof} First, for $2\leq k<\omega$, we will construct a Polish space $X_k$, presentations $\p_k=(X_k,d_k,\ol{x}_k)$, $\s_k=(X_k,d_k,\ol{y}_k)$ of $X_k$ and $F_k\in F_{\aleph_0}(X_k)$ such that $ \phi_{\p_k}(F)=2+k $ and $\phi_{\s_k}(F)=2$.

Fix $2\leq k <\omega$. Put
\[Y_k=\set{w_i\mid i\in[0,2^{-3}]}\cup\set{z_l\mid l\geq k}\]
and define for each $m\leq k$ a complete metric $\delta_m^k$ on $Y_k$ by
\[\delta_{m}^k(w_i,w_j)=2^{-m-1}|i-j|,\qquad
\delta_{m}^{k}(z_{l},w_i)=2^{-m-1}(1-2^{-l}-i)\]
and
\begin{align*}
\delta_{m}^{k}(z_l,z_n)=\begin{cases} 
0 \qquad \qquad \text{if} \quad l=n\\
2^{-k-1} \qquad \text{if} \quad l\neq n
\end{cases}
\end{align*}
for all $i,j\in [0,2^{-3}]$ and $l,n\geq k$. Note that $\delta_m^k$ induce the same Polish topology on $Y_k$ for all $m\leq k$. 

Next for $s\in 2^{\leq k}$ let 
$Y_k^{s}=\set{y^{s}\mid y\in Y_k}$
denote a copy of $Y_k$ and let $F_k=\set{f_t\mid t\in 2^{k+1}}$. Put
\[X_k=\left(\bigsqcup_{s\in 2^{\leq k}} Y_k^s\right)\sqcup F_k.\]
Define a complete metric $d_k$ on $X_k$ by letting
\begin{itemize}
\item[1)] $d_{k|Y_k^s}=\delta_{\text{ln}(s)}^k$ for all $s\in 2^{\leq k}$.
\item[2)] $d_k(w_0^s,w_0^{s^\smallfrown j})=2^{-\text{ln}(s)-2}+\frac{1}{k+1}2^{-k-3}$ for all $s\in 2^{<k}$ and $j\in 2$.
\item[3)] $d_k(z_l^s,z_l^{s^\smallfrown j})=2^{-\text{ln}(s)-l-2}+\frac{1}{k+1}2^{-k-l-1}$ for all $s\in 2^{<k}$, $j\in 2$ and $l\geq k$.
\item[4)] $d_k(w_0^s,f_{s^\smallfrown j})=2^{-k-1}-2^{-k-3}$ for all $s\in 2^k$ and $j\in 2$.
\end{itemize}
That $d_k$ is indeed a metric on $X_k$ follows from the fact that for each $s\in 2^{<k}$, $j\in 2$, $n\in \omega$ and $l\geq k$ both of the squares
\[\xymatrixcolsep{7pc}
\xymatrix{
w_0^{s} \ar@{-}^{2^{-\text{ln}(s)-1}(1-2^{-l})}[r] \ar@{-}_{2^{-\text{ln}(s)-2}+\frac{1}{k+1}2^{-k-3}}[d] & z^{(t)}_l \ar@{-}^{2^{-\text{ln}(s)-l-2}+\frac{1}{k+1}2^{-k-l-1}}[d]
  \\
w^{s^\smallfrown j}_0 \ar@{-}_{2^{-\text{ln}(s)-2}(1-2^{-l})}[r] & z^{s^\smallfrown j}_l 
} \]
and
\[\xymatrixcolsep{4pc}
\xymatrix{
z_l^{s} \ar@{-}^{2^{-k-1}}[r] \ar@{-}_{2^{-\text{ln}(s)-l-2} +\frac{1}{k+1} 2^{-k-l-1}}[d] & z^{(t)}_{l+n} \ar@{-}^{2^{-\text{ln}(s)-l-n-2}+ \frac{1}{k+1}2^{-k-l-n-1}}[d]
  \\
z_l^{s^\smallfrown j} \ar@{-}_{2^{-k-1}}[r] & z^{s^\smallfrown j}_{l+n} 
}
\]
satisfy the triangle inequality.

Now, let
$$\ol{v}_k=\set{w_i^s, z_l^s \mid i\in [0,2^{-3}]\cap \Q, \ l\geq k, \ s\in 2^{\leq k}}\cup F_k$$
and 
$$\ol{u}_k=\set{w_i^s, z_l^s \mid i\in (0,2^{-3}]\cap \Q, \ l\geq k, \ s\in 2^{\leq k}}\cup F_k.$$
Clearly, both $\ol{v}_k$ and $\ol{u}_k$ are countable dense in $X_k$.
Put 
\[\p_k=(X_k,d_k,\ol{v}_k) \quad \text{and}\quad \s_k=(X_k,d_k,\ol{u}_k).\]

We will first argue that $ \phi_{\p_k}(F_k)=2+k$. Clearly $\phi_{\p_k}(F_k)\leq 2+k$ since $F_k$ contains $2^{k+1}$ elements. Moreover, it is straightforward to check that 
\[B_{d_k}(w_0^s,2^{\text{ln}(s)-1})=\ol{B}_{d_k}(w_0^s,2^{\text{ln}(s)-1})=\left(\bigsqcup_{t\in 2^{\leq k}, \ s\subseteq t}Y_k^t\right)\sqcup\set{ f_t\mid t\in 2^{k+1}, \ s\subseteq t}\]
for all $s\in 2^{\leq k}$. Therefore, we have
\[\ol{B}_{d_k}(w_0^{s^\smallfrown 0},2^{\text{ln}(s)-2}),\ol{B}_{d_k}(w_0^{s^\smallfrown 1},2^{\text{ln}(s)-2})\subseteq B_{d_k}(w_0^s,2^{\text{ln}(s)-1})\]
and
\[ \ol{B}_{d_k}(w_0^{s^\smallfrown 0},2^{\text{ln}(s)-2})\cap\ol{B}_{d_k}(w_0^{s^\smallfrown 1},2^{\text{ln}(s)-2})=\emptyset\]
for all $s\in 2^{<k}$. Since it also holds that
\[f_{s^\smallfrown 0},f_{s^\smallfrown 1}\in B_{d_k}(w_0^s,2^{\text{ln}(s)-1})\] for all $s\in 2^k$, we obtain $\phi_{\p_k}(F_k)= k+2$.

We will now argue that $\phi_{\s_k}(F_k)=2$. In this case it is clear that $\phi_{\s_k}(F_k)\geq 2$, as $f_{s^\smallfrown 0},f_{s^\smallfrown 1}\in F_k$ with $d_k(f_{s^\smallfrown 0},f_{s^\smallfrown 1})<2^{-1}$ for all $s\in 2^k$. Moreover, it is straightforward to check that if $x\in X_k\setminus \set{w_0^s\mid s\in 2^{\leq k}}$, $t\in 2^k$ and $m\in \omega$ such that 
\[f_{t^\smallfrown 0},f_{t^\smallfrown 1}\in B_{d_k}(x,2^{-m-1}),\]
then for all $s\in 2^{\leq k}$ there is $N_s\in \omega$ such that \[z_l^s\in B_{d_k}(x,2^{-m-1})\]
for all $l\geq N_s$. Hence, if $x,y\in \ol{u}_k$ and $n,m\in \omega$ satisfy that there is $s,t\in 2^k$ such that 
\[f_{s^\smallfrown 0},f_{s^\smallfrown 1}\in B_{d_k}(x,2^{-n-1}) \quad \text{and}\quad f_{t^\smallfrown 0},f_{t^\smallfrown 1}\in B_{d_k}(y,2^{-m-1}),\]
then $B_{d_k}(x,2^{-n-1})\cap B_{d_k}(y,2^{-m-1})\neq \emptyset$. We can therefore conclude that $\phi_{\s_k}(F_k)=2$.

Finally, let
\[X=[0,1]\sqcup\left(\bigsqcup_{2\leq k<\omega} X_k\right), \qquad F=\bigsqcup_{2\leq k<\omega} F_k, \qquad \ol{x}=\left([0,1]\cap \Q\right)\sqcup\left(\bigsqcup_{2\leq k<\omega}\ol{v}_k\right)\] 
and
\[ \ol{y}=\left((0,1]\cap \Q\right)\sqcup\left(\bigsqcup_{2\leq k<\omega}\ol{u}_k\right).\]
Moreover, define a complete metric $d$ on $X$ by letting
\begin{itemize}
\item[1)] $d_{|X_k}=2^{-1}d_k$ for all $2\leq k<\omega$.
\item[2)] $d(i,j)=|i-j|$ for all $i,j\in [0,1]$.
\item[3)] $d(0,w_0^{\emptyset,k})=2^{-2}$ for all $2\leq k<\omega$. Here $w_0^{\emptyset,k}$ denotes the element $w_0^\emptyset\in X_k$. 
\end{itemize}
Put 
\[\s=(X,d,\ol{x}) \qquad \text{and} \qquad \p=(X,d,\ol{y}).\]
Then, as
\[ \ol{B}_d(w_0^{\emptyset,k},2^{-2})\subseteq B_d(0,2^{-1})\]
and $\ol{B}_{d_k}(w_0^{\emptyset,k},2^{-1})=\ol{B}_d(w_0^{\emptyset,k},2^{-2})\cap X_k$
for all $2\leq k< \omega$, we must have $\phi_\p(F)=\omega+1$. Moreover, if $2\leq k<\omega$, $m\in \omega$ and $y\in X_k$ satisfy  that there is $t\in 2^k$ such that 
\[ f_{t^\smallfrown 0},f_{t^\smallfrown 1}\in B_{d}(y,2^{-m-1}),\]
then 
\[B_d(y,2^{-m-1})\setminus B_d(r,2^{-1})\neq \emptyset.\]
Therefore we can conclude that $\phi_\s(F)=2$.
\end{proof}

The next result gives a bound on the variation one can obtain by changing the dense sequence. 

\begin{thm}\label{boundchangseq} Let $\p=(X,d,\ol{x})$ and $\s=(X,d,\ol{y})$ be presentations of a Polish space $X$. Then
\[\phi_\p(F)\leq \omega\phi_\s(F)+2\]
for all $F\in F_{\aleph_0}(X)$.
\end{thm}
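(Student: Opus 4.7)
The plan is to prove, by transfinite induction on $\alpha < \omega_1$, the tracking claim
\[(\ast)_\alpha:\quad \forall n \in D_\p^{\omega\alpha + 2}(A_F^\p),\ \forall \epsilon > 0,\ \exists m \in D_\s^\alpha(A_F^\s) \text{ with } \cbas(m) \subseteq B_d(x_{n(0)}, 2^{-n(1)-1} + \epsilon) \text{ and } m(1) \geq n(1).\]
Applying $(\ast)_\alpha$ with $\alpha = \phi_\s(F)$ forces $D_\p^{\omega\phi_\s(F) + 2}(A_F^\p) = \emptyset$, yielding the theorem. The overall skeleton mirrors the inductions in \cref{indepdenseHB} and \cref{indepdensecriter}; the novelty is that now each step of the $\s$-derivative is allowed to absorb a full $\omega$-block of $\p$-derivative steps.

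The base case $\alpha = 0$ is routine: from $n \in D_\p^2(A_F^\p) \subseteq A_F^\p$ pick $x \in \bap(n) \cap F$ and use density of $\ol{y}$ to find $m \in A_F^\s$ with $y_{m(0)}$ close to $x$, $m(1) \geq n(1)$, and $2^{-m(1)-1} < \epsilon/2$; the triangle inequality gives the containment. The successor case is the main work. Given $n \in D_\p^{\omega\alpha + \omega + 2}$, select $u, v \in D_\p^{\omega\alpha + \omega + 1}$ above $n$ with $u \cw_\p v$; since $u, v \in D_\p^{\omega\alpha + k}$ for every $k < \omega$, there exist deeper $\prec_\p$-descendants $u^* \prec_\p u$ and $v^* \prec_\p v$ in $D_\p^{\omega\alpha + 2}$ whose closed balls $\cbap(u^*), \cbap(v^*)$ fit comfortably inside $\bap(u), \bap(v)$ (with radii much smaller than those of $u, v$). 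Applying $(\ast)_\alpha$ to $u^*, v^*$ with sufficiently small auxiliary parameters produces $u', v' \in D_\s^\alpha$ whose $\cbas$-images sit strictly inside $\bap(u), \bap(v)$, ensuring $u' \cw_\s v'$; a density argument in $\ol{y}$ then yields the common $\s$-parent $m \in D_\s^{\alpha+1}$ with $m \prec_\s u', v'$ and $\cbas(m) \subseteq B_d(x_{n(0)}, 2^{-n(1)-1} + \epsilon)$. For the limit step at $\lambda < \omega_1$, pick a cofinal sequence $(\beta_k)_k \uparrow \lambda$, extract from $n \in D_\p^{\omega\lambda + 2}$ a chain of $\prec_\p$-descendants $n_k \in D_\p^{\omega\beta_k + 2}$ with $n_k(1) \to \infty$, and mimic the argument in the limit step of \cref{indepdenseHB} to assemble a single $m$ belonging to $D_\s^{\beta_k}(A_F^\s)$ for every $k$, hence to $D_\s^\lambda(A_F^\s)$.

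The principal obstacle is the successor step: in a general Polish metric space, disjoint open balls $\bap(u), \bap(v)$ need not be separated by any uniform positive distance, and $\cbap(u)$ need not lie at positive distance from the boundary of $\bap(n)$. Simultaneously meeting the three constraints on $m$ --- that $\bas(m)$ cover $\cbas(u') \cup \cbas(v')$, that $u' \cw_\s v'$, and that $\cbas(m)$ stay in the $\epsilon$-expansion of $\bap(n)$ --- requires delicate control of the radii and centers involved, and the extra $\omega$-block of $\p$-descent available between $\omega\alpha + 2$ and $\omega\alpha + \omega + 2$ is precisely what supplies the margins that a shrinkability hypothesis on $d$ would otherwise furnish.
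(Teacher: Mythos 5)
There is a genuine gap in the successor step, and it is exactly at the point you flag as the ``principal obstacle'' without resolving it. Your claim is that applying $(\ast)_\alpha$ to deep descendants $u^*\prec_\p u$, $v^*\prec_\p v$ ``with sufficiently small auxiliary parameters produces $u',v'\in D_\s^\alpha$ whose $\cbas$-images sit strictly inside $\bap(u),\bap(v)$, ensuring $u'\cw_\s v'$.'' This does not follow from $(\ast)_\alpha$: the conclusion of $(\ast)_\alpha$ only places $\cbas(u')$ inside $B_d(x_{u^*(0)},2^{-u^*(1)-1}+\epsilon)$, and in a general Polish metric space the $\epsilon$-fattening of a ball need not be contained in $\bap(u)$ for \emph{any} $\epsilon>0$, even though $\cbap(u^*)\subseteq\bap(u)$ --- there may be points of $X$ at distance strictly between $2^{-u^*(1)-1}$ and $2^{-u^*(1)-1}+\epsilon$ from $x_{u^*(0)}$ lying outside $\bap(u)$, for every $\epsilon$. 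So $\cbas(u')$ may leak out of $\bap(u)$ and meet $\cbas(v')$, and the disjointness $u'\cw_\s v'$ is not secured. Saying that ``the extra $\omega$-block supplies the margins'' is not an argument; the whole difficulty is to say \emph{how}. The paper's device is different: it does not attempt to trap the $\s$-balls inside $\bap(u)$ and $\bap(v)$ at all. It uses the $\omega$-block to produce sequences $n^u_k,n^v_k\in D_\p^{\omega\alpha+1}(A_F^\p)$ below $u$ and $v$ with $n^u_k(1),n^v_k(1)>n(1)+k+1$, and then invokes completeness of $d$ together with $\cbap(u)\cap\cbap(v)=\emptyset$ to find two of these whose centers are at distance $>2^{-k}$ while all the relevant radii are $\ll 2^{-k}$; the corresponding $\s$-balls are then disjoint by the triangle inequality alone. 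That three-case separation argument is the missing idea.

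A second, related defect is your invariant $m(1)\geq n(1)$. The paper instead takes $m(1)=n(1)-1$, i.e.\ the tracking $\s$-ball has \emph{twice} the radius of the $\p$-ball, centered within $2^{-n(1)-2}$ of $x_{n(0)}$. This doubling is what allows a single $\bas(m)$ to contain both tiny descendant balls $\cbas(u'),\cbas(v')$ no matter where they sit inside $\bap(n)$. With your constraint $m(1)\geq n(1)$, the ball $\bas(m)$ has radius at most that of $\bap(n)$, and if $u^*$ and $v^*$ sit near diametrically opposite parts of $\bap(n)$ (at mutual distance close to $2^{-n(1)}$), no admissible $\bas(m)$ contains both, so the successor step of $(\ast)_\alpha$ as you have stated it is not even provable. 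The cost of the doubling is only that at the end one extra derivative is needed to kill the indices with $n(1)=0$, which is where the ``$+2$'' in the bound comes from; your bookkeeping reaching $\omega\phi_\s(F)+2$ is fine, but the invariant driving it needs to be the paper's, not yours.
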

\begin{proof}
We will prove by induction on $\alpha<\omega_1$ that if $n\in D^{\omega\alpha +1}_\p(A_F^\p)$ with $n(1)>0$ and $\epsilon>0$, then there is $m\in D_\s^{1+\alpha}(A_F^\s)$ satisfying $d(x_{n(0)},y_{m(0)})<\epsilon$ and $m(1)=n(1)-1$. Note that this sufficient, since if $\phi_\s(F)=\alpha_0$ for some $\alpha_0<\omega_1$, then $D^{\alpha_0}_\s(A_F^\s)=\emptyset$ and hence $D_\p^{\omega\alpha_0 +1}(A_F^\p)\subseteq \omega\times \set{0}$. So $\phi_\p(F)\leq \omega\alpha_0+2$, as desired. 

For $\alpha=0$ the statement above is clearly true. 

Assume that the statement holds for some $\alpha<\omega_1$. Let $\epsilon>0$ and $n\in D_\p^{\omega(\alpha +1)+1}(A_F^\p)$ with $n(1)>0$. Then there are $u,v\in D_\p^{\omega(\alpha +1)}(A_F^\p)$ with $n\pc_\p u,v$ and $u\cw_\p v$.
Therefore for each $k\in \omega$ there exist $n^u_k,n_k^v\in D_\p^{\omega\alpha +1}(A_F^\p)$ such that $u\pc_\p n_k^u$, $v\pc_\p n_k^v$ and $n_k^u(1),n_k^v(1)> n(1)+k+1$. Next, as $d$ is complete and $u\cw_\p v$, we are in one of the following three cases
\begin{itemize}
\item[1)] there is $k\in \omega$ such that $d(x_{n_k^u(0)},x_{n_k^v(0)})> 2^{-k}$.
\item[2)] there are $k_0,k_1\in \omega$ such that $d(x_{n^u_{k_0}(0)},x_{n^u_{k_1}(0)})> 2^{-\min\set{k_0,k_1}}.$
\item[3)] there are $k_0,k_1\in \omega$ such that   $d(x_{n^v_{k_0}(0)},x_{n^v_{k_1}(0)})> 2^{-\min\set{k_0,k_1}}.$
\end{itemize} 
If we are in the first case, it follows by the induction hypothesis that there exist $m_0,m_1\in D_\s^{1+\alpha}(A_F^\s)$ with $m_0(1),m_1(1)\geq n(1)+ k+1$, $d(x_{n_k^u(0)},y_{m_0(0)})<2^{-n_k^u(1)-1}$ and $d(x_{n_k^v(0)},y_{m_1(0)})<2^{-n_k^v(1)-1}$. Then we have $m_0\cw_\s m_1$. Moreover, for any $m\in \omega^2$ with $d(x_{n(0)},y_{m(0)})< 2^{-n(1)-2}$ and $m(1)=n(1)-1$, we have $m\pc_\s m_0,m_1$. Therefore $m\in D^{1+\alpha +1}_\s(A_F^\s)$. The second and third case are handled analogously. 

Finally, assume that the statement holds for all $\beta<\lambda$ for some limit ordinal $\lambda<\omega_1$. Let $\epsilon>0$ and $n\in D^{\omega\lambda + 1}_\p(A_F^\p)$ be given. Then there are $\tilde{n}_\beta\in D^{\omega\beta +1}_\p(A_F^\p)$ for all $\beta<\lambda$ such that $n\pc_\p\tilde{n}_\beta$ and  $\tilde{n}_\beta(1)>n(1)+1$. Thus  for each $\beta<\lambda$ there is $m_\beta\in D^{1+\beta}_\s(A_F^\s)$ with $$d(x_{\tilde{n}_\beta(0)},y_{m_\beta(0)})<2^{-\tilde{n}_\beta(1)-1}$$ and $m_\beta(1)=\tilde{n}_\beta(1)-1$. Now, choose any $m\in \omega^2$ with $d(x_{n(0)},y_{m(0)})<2^{-n(1)-2}$ and $m(1)=n(1)-1$. Then $m\pc_\s m_\beta$ for all $\beta<\lambda$ and hence $m\in D^{1+\lambda}_\s(A_F^\s)$, as wanted. 
\end{proof}

\subsection{Change of metric}

Now we will investigate what happens when we change the complete metric. It is clear that the rank depends heavily on the chosen metric. But if for some $k\in \omega$ we have presentations $\p=(X,d,\ol{x})$ and $\s=(X,2^{-k}d,\ol{x})$ of a Polish space $X$, then $$\phi_\p(F)\leq \phi_\s(F) \leq \phi_\p(F)+k$$ for all $F\in F_{\aleph_0}(X)$. In general we have the following result bounding the change, when passing to an equivalent metric. Here we say that two compatible metrics $d,\delta$ on a Polish space $X$ are \textbf{equivalent} if there exists $N>0$ such that 
\[\frac{1}{N} d(x,y)\leq \delta(x,y)\leq Nd(x,y)\]
for all $x,y\in X$.
\begin{prop} Let $\p=(X,d,\ol{x})$ and $\s=(X,\delta,\ol{x})$ be presentations of a Polish space $X$ such that $d$ and $\delta$ are equivalent metrics. Then
\[\phi_\p(F)\leq \omega(\phi_\s(F)+1)\]
for all $F\in F_{\aleph_0}(X)$.
\end{prop}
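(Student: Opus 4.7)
The plan is to adapt the induction used in the proof of Theorem \ref{boundchangseq}, absorbing the metric distortion constant $N$ into a finite shift $k_0$ that plays the role of the ``$+1$'' there. Let $N \geq 1$ satisfy $d/N \leq \delta \leq Nd$, and fix $k_0 \in \omega$ large enough (say $2^{k_0} > 4N^2$) that the metric/ball estimates below have strict room. I will prove by transfinite induction on $\alpha<\omega_1$ the following statement: for every $n \in D_\p^{\omega\alpha + k_0}(A_F^\p)$ with $n(1) \geq 2k_0$ and every $\epsilon > 0$, there exists $m \in D_\s^{\alpha}(A_F^\s)$ satisfying $d(x_{n(0)}, x_{m(0)}) < \epsilon$ and $m(1) = n(1) - k_0$. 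Handling the case $n(1) < 2k_0$ by first performing a $2k_0$-step $\p$-descent from $n$, this will yield $D_\p^{\omega\phi_\s(F) + 3k_0}(A_F^\p) = \emptyset$, and therefore $\phi_\p(F) \leq \omega\phi_\s(F) + 3k_0 < \omega(\phi_\s(F) + 1)$.

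The base case $\alpha = 0$ will be a direct calculation: for any $m(0)$ with $x_{m(0)}$ sufficiently $d$-close to $x_{n(0)}$, the choice of $k_0$ forces $\bap(n) \subseteq \bas(m)$ and hence $m \in A_F^\s$. The limit case $\alpha = \lambda$ follows the blueprint of the limit step in the proof of Theorem \ref{boundchangseq}: for each $\beta < \lambda$, a $\p$-descent from $n$ produces a sufficiently deep $\tilde n_\beta \in D_\p^{\omega\beta + k_0}$ with $n \prec_\p \tilde n_\beta$, the induction hypothesis yields $m_\beta \in D_\s^\beta(A_F^\s)$ close to $\tilde n_\beta$, and a single $m = (m(0), n(1) - k_0)$ with $x_{m(0)}$ close to $x_{n(0)}$ satisfies $m \prec_\s m_\beta$ for every $\beta$ thanks to the $k_0$ estimates. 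A short transfinite induction on $\gamma$ using transitivity of $\prec_\s$ shows that $m \prec_\s m'$ with $m' \in D_\s^\gamma(A_F^\s)$ implies $m \in D_\s^\gamma(A_F^\s)$, so $m \in \bigcap_{\beta<\lambda} D_\s^\beta(A_F^\s) = D_\s^\lambda(A_F^\s)$, as required.

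The successor step is the main obstacle. Given $n \in D_\p^{\omega(\alpha + 1) + k_0}(A_F^\p)$, the iterated $\p$-descent of Theorem \ref{boundchangseq} produces, for each $K \in \omega$, descendants $\tilde u_K, \tilde v_K \in D_\p^{\omega\alpha + k_0}$ with $n \prec_\p \tilde u_K, \tilde v_K$, $\tilde u_K \cw_\p \tilde v_K$, and $\tilde u_K(1), \tilde v_K(1) \geq n(1) + K$. A naive attempt to apply the induction hypothesis to a single such pair would fail: the resulting balls $\cbas(m_u), \cbas(m_v)$ can have $d$-diameter up to a factor $N$ larger than $\cbap(\tilde u_K), \cbap(\tilde v_K)$, so bare disjointness of the latter does not rule out overlap of the former. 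The key idea is to import the three-case trichotomy of Theorem \ref{boundchangseq} applied to $(x_{\tilde u_K(0)})_K$ and $(x_{\tilde v_K(0)})_K$: either (1) some pair $\tilde u_K, \tilde v_K$ already has $d(x_{\tilde u_K(0)}, x_{\tilde v_K(0)}) > 2^{-K}$, or (2) the sequence $(x_{\tilde u_K(0)})_K$ contains two members at $d$-distance $> 2^{-\min}$, or (3) the analogue for $(x_{\tilde v_K(0)})_K$; the three cases are exhaustive because the failure of all three would, by completeness of $d$, yield $d$-Cauchy limits $x_u \in \cbap(u), x_v \in \cbap(v)$ with $d(x_u, x_v) = 0$, contradicting the disjointness $\cbap(u) \cap \cbap(v) = \emptyset$. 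Applying the induction hypothesis to the two elements at play with a tolerance far smaller than $2^{-K}/N$ delivers $m_1, m_2 \in D_\s^\alpha(A_F^\s)$ whose $\delta$-centers are bounded apart from below by a quantity of order $2^{-K}/N$ and whose $\s$-radii are at most $2^{k_0 - n(1) - K - 1}$; the hypothesis $n(1) \geq 2k_0$ together with $2^{k_0} > 4N^2$ then secures $m_1 \cw_\s m_2$. A final choice of $m = (m(0), n(1) - k_0)$ with $x_{m(0)}$ sufficiently $d$-close to $x_{n(0)}$ gives $\cbas(m_1), \cbas(m_2) \subseteq \bas(m)$ by the same estimates, placing $m$ in $D_\s^{\alpha + 1}(A_F^\s)$ and completing the induction.
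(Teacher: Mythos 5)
Your proposal is correct and takes essentially the same route as the paper: the paper likewise fixes $l$ with $2^{-l}d\leq \delta\leq 2^{l}d$ and proves, by the same transfinite induction adapted from Theorem \ref{boundchangseq} (including the three-case completeness trichotomy at successor stages), that $n\in D_\p^{\omega\alpha+1}(A_F^\p)$ with $n(1)>l$ yields $(n(0),n(1)-l-1)\in D_\s^{1+\alpha}(A_F^\s)$, and then concludes via the same observation that a set of indices with bounded second coordinate dies after finitely many further derivatives. Your write-up simply makes explicit the constants and the distortion estimates that the paper leaves as ``a similar argument.''
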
 

\begin{proof} First, since $d$ and $\delta$ are equivalent, we may fix $l\in \omega$ such that 
\[2^{-l}d(x,y)\leq \delta(x,y)\leq 2^ld(x,y)\]
for all $x,y\in X$.

By a similar argument as the one in the proof of Theorem \ref{boundchangseq}, one can prove the following.  
For all $\alpha<\omega_1$, if $n\in D^{\omega\alpha+1}_\p(A_F^\p)$ with $n(1)>l$, then  $\tilde{n}=(n(0),n(1)-l-1)\in D_\s^{1+\alpha}(A_F^\s)$.
 
This completes the proof, since if $\phi_\s(F)=\alpha_0$ for some $\alpha_0<\omega_1$, then $D^{\alpha_0}_\s(A_F^\s)=\emptyset$ and hence $D_\p^{\omega\alpha_0+1}(A_F^\p)\subseteq \omega\times \set{0,\ldots,l}$. Therefore we obtain $D_\p^{\omega(\alpha_0+1)}(A_F^\p)=\emptyset$, as wanted.

\end{proof}

Next we will see that there is no bound in general. 
For a presentation $\p=(X,d,\ol{x})$ of a Polish space $X$ and a homeomorphism $f\colon X\to X$ we define a presentation $\p_f=(X,d_f,\ol{y})$ of $X$ by $d_f(x,y)=d(f(x),f(y))$ for all $x,y\in X$ and $y_i=f^{-1}(x_i)$ for all $i \in \omega$. We then have the following connection between the two ranks $\phi_\p$ and $\phi_{\p_f}$.

\begin{prop}\label{changemetric} Let $\p$ be a presentation of a Polish space $X$ and $f\colon X\to X$ a homeomorphism. Then $\phi_{\p_f}(F)=\phi_\p(f(F))$ for all $F\in F_{\aleph_0}(X)$.
\end{prop}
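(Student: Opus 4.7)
My proof plan is to show that the construction of $\phi_{\p_f}$ is obtained from the construction of $\phi_\p$ purely by pulling back through $f$, so the two ranks agree after applying $f$ to the closed set.

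First, I would compute how the two families of balls are related. From $d_f(x,y) = d(f(x),f(y))$ and $y_i = f^{-1}(x_i)$, a direct calculation gives
\[
\text{B}_{\p_f}(n) = f^{-1}(\bap(n)) \quad \text{and} \quad \ol{\text{B}}_{\p_f}(n) = f^{-1}(\cbap(n))
\]
for every $n \in \omega^2$. Because $f$ is a bijection, applying $f^{-1}$ preserves inclusions and disjointness, so the relations $\pc_{\p_f}$ and $\cw_{\p_f}$ coincide as subsets of $\omega^2 \times \omega^2$ with $\pc_\p$ and $\cw_\p$, respectively. Hence the derivative $D_{\p_f}\colon P(\omega^2) \to P(\omega^2)$ is literally the same operator as $D_\p$, since both are defined purely in terms of these two relations. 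In particular $|A|_{\p_f} = |A|_\p$ for every $A \subseteq \omega^2$.

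Next, I would translate the input. Using $\text{B}_{\p_f}(n) = f^{-1}(\bap(n))$,
\[
\text{B}_{\p_f}(n) \cap F \neq \emptyset \iff \bap(n) \cap f(F) \neq \emptyset,
\]
so $A_F^{\p_f} = A_{f(F)}^{\p}$. Combining this with the preceding paragraph,
\[
\phi_{\p_f}(F) = |A_F^{\p_f}|_{\p_f} = |A_{f(F)}^\p|_\p = \phi_\p(f(F)),
\]
as desired.

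There is no real obstacle here: the content of the statement is just that the whole construction is natural with respect to homeomorphisms, in the sense that $\p \mapsto \p_f$ corresponds on the ball side to pre-composing with $f$. The only things to verify are that the combinatorial data $(\pc_\p,\cw_\p)$ and the map $F \mapsto A_F^\p$ both transform in the expected way, which is the elementary computation above.
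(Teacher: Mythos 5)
Your proof is correct and is essentially the paper's argument spelled out: the paper cites the identity $x\in \ba_{\p_f}(n)\iff f(x)\in \bap(n)$ together with Lemma \ref{homoless} (applied with the identity map and the bi-implication remark), and your computation that $\pc_{\p_f}=\pc_\p$, $\cw_{\p_f}=\cw_\p$, and $A_F^{\p_f}=A_{f(F)}^\p$ is exactly what makes that application work.
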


\begin{proof} This follows directly from Lemma \ref{homoless} and the fact that 
\[x\in \ba_{\p_f}(n)\iff f(x)\in \bap(n)\]
for all $n\in \omf$.
\end{proof}

We will now use Proposition \ref{changemetric} to prove that a change in metric may result in arbitrarily countable change in the induced rank of a fixed closed discrete subset of $\omega^\omega$.

Let $S=\set{ s_i\in \omega^{<\omega} \mid i\in \omega}$ and $T=\set{t_i\in \omega^{<\omega}\mid i\in \omega}$ satisfy $\text{ln}(s_i)=\text{ln}(t_i)$, $s_i\perp s_j$, $t_i\perp t_j$ and $s_i\perp t_j$ for all $i,j\in \omega$ with $i\neq j$. We define the map $f_{S,T}\colon \omega^\omega\rightarrow \omega^\omega$ given by
\begin{equation*}
f_{S,T}(x)=\left\lbrace \begin{array}{cc}
t_i^\smallfrown z \quad &\text{if}\quad  x=s_i^\smallfrown z \ \text{for some} \ i\in \omega\ \text{and} \ z\in \omega^\omega\\
s_i^\smallfrown z \quad &\text{if}\quad  x=t_i^\smallfrown z \ \text{for some} \ i\in \omega\ \text{and} \ z\in \omega^\omega\\
x \quad &\text{otherwise} 
\end{array}\right. 
\end{equation*}
It is straightforward to check that $f_{S,T}$ is a homeomorphism. We call the pair $S,T$  \textbf{compatible sets} of initial segments, and we will say that $f_{S,T}$ is the induced \textbf{switch map}.

Below we consider the standard ultra-metric $\rho$ on $\omega^\omega$ given by
\[\rho(x,y)= \left\lbrace \begin{array}{cc}
0 \quad &\text{if} \quad x=y\\
2^{-\min\set{n\in \omega\mid x(n)\neq y(n)}-1} \quad &\text{if} \quad x\neq y
\end{array} \right. \]
for all $x,y\in \omega^\omega$.

\begin{thm}\label{strongdepmet} Let $\p=(\omega^\omega,\rho,\ol{x})$ be a presentation of $\omega^\omega$, where $\rho$ denotes the standard ultra-metric.
There is $F\in F_{\aleph_0}(\omega^\omega)$ and 
 homeomorphisms $f_\alpha \colon \omega^\omega\to \omega^\omega$ such that $\phi_\p(F)=2$ and $\phi_{\p_{f_\alpha}}(F)=\alpha$ for all $2\leq\alpha<\omega_1$. 
\end{thm}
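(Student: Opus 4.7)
By Proposition~\ref{changemetric}, $\phi_{\p_{f_\alpha}}(F) = \phi_\p(f_\alpha(F))$, so it suffices to fix one $F \in F_{\aleph_0}(\omega^\omega)$ with $\phi_\p(F) = 2$ and, for each $\alpha \in [2, \omega_1)$, produce a countable closed discrete $F_\alpha \subseteq \omega^\omega$ with $\phi_\p(F_\alpha) = \alpha$ together with a self-homeomorphism $f_\alpha$ of $\omega^\omega$ satisfying $f_\alpha(F) = F_\alpha$.

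Take $F = \set{(n,0)^\smallfrown 0^\omega,\, (n,1)^\smallfrown 0^\omega : n \in \omega}$, a countably infinite closed discrete subset of $\omega^\omega$. A direct check from the definitions shows that the only balls in $A_F^\p$ admitting $\cw_\p$-disjoint refinements meeting $F$ are the level-$0$ balls $\set{y : y(0)=n}$ (split at level $1$ by $y(1) \in \set{0,1}$), and no ball in $D_\p(A_F^\p)$ has two $\cw_\p$-disjoint refinements in $D_\p(A_F^\p)$; hence $\phi_\p(F) = 2$.

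Build $F_\alpha$ by transfinite recursion using two operations on countable closed subsets $G \subseteq \omega^\omega$: the shift $G \mapsto \set{(i)^\smallfrown z : z \in G}$ into the clopen slice $\set{y : y(0) = i}$, and the disjoint union $\bigsqcup_n G_n$ of sets living in pairwise disjoint first-coordinate slices. Direct analysis of $D_\p$ yields: (i) $\phi_\p\bigl(\bigsqcup_n G_n\bigr) = \sup_n \phi_\p(G_n)$, with no ``$+1$'' appearing because $A_{\bigsqcup G_n}^\p$ decomposes disjointly along the slices and contains no ball covering two of them simultaneously; and (ii) $\phi_\p\bigl(\set{(i)^\smallfrown z : z \in G}\bigr) = \phi_\p(G) + 1$ whenever $G$ is \emph{spread}, meaning that for each $\gamma < \phi_\p(G)$ the set $D_\p^\gamma(A_G^\p)$ contains level-$0$ balls for at least two distinct first coordinates. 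Setting $F_2 = F$, $F_{\beta+1} = \bigsqcup_{i \in \omega} \set{(i)^\smallfrown z : z \in F_\beta}$, and $F_\lambda = \bigsqcup_{n \in \omega} \set{(n)^\smallfrown z : z \in F_{\beta_n}}$ for any strictly increasing sequence $(\beta_n)_{n \in \omega}$ cofinal in the limit $\lambda$, an induction delivers $\phi_\p(F_\alpha) = \alpha$ with each $F_\alpha$ remaining spread.

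Finally, since both $F$ and $F_\alpha$ are countably infinite closed discrete subsets of the zero-dimensional Polish space $\omega^\omega$, a homeomorphism $f_\alpha \colon \omega^\omega \to \omega^\omega$ with $f_\alpha(F) = F_\alpha$ is constructed by a standard clopen-partition argument: partition $\omega^\omega$ into clopen pieces separating the points of $F$ (the natural partition $\set{y : y(0) = n,\, y(1) = b} : n, b \in \omega$ works, with the pieces $b \geq 2$ containing no $F$-point), analogously partition $\omega^\omega$ into clopen pieces for $F_\alpha$ (possible by discreteness of $F_\alpha$ combined with zero-dimensionality), match pieces bijectively aligning $F$-points with $F_\alpha$-points, and build a homeomorphism between each pair of matched pieces sending the distinguished $F$-point to the distinguished $F_\alpha$-point, using that every nonempty open subset of $\omega^\omega$ is homeomorphic to $\omega^\omega$ and that $\omega^\omega$ is topologically homogeneous. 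Proposition~\ref{changemetric} then gives $\phi_{\p_{f_\alpha}}(F) = \phi_\p(F_\alpha) = \alpha$, concluding the proof. The main obstacle is fact~(ii): verifying that the shift operation increases the rank by exactly one --- neither zero nor more --- which relies delicately on the spread condition being preserved throughout the recursion.
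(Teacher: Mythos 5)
Your global strategy --- reducing via Proposition~\ref{changemetric} to producing, for each $\alpha$, a countable closed discrete set $F_\alpha$ with $\phi_\p(F_\alpha)=\alpha$ and then carrying $F$ onto $F_\alpha$ by an autohomeomorphism of $\omega^\omega$ --- is sound, and the final homeomorphism step (clopen partition, homogeneity of $\omega^\omega$, every nonempty open subset of $\omega^\omega$ being homeomorphic to $\omega^\omega$) is fine. The gap is in your fact~(ii), exactly the point you flag as delicate. The relation $\cw_\p$ is defined via \emph{closed} balls, and for the standard ultrametric $\rho$ the closed ball $\ol{B}_\rho(x,2^{-k-1})$ is the cylinder determined by $x|_k$, one level \emph{coarser} than the open ball $B_\rho(x,2^{-k-1})$, which is the cylinder determined by $x|_{k+1}$. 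In particular every closed ball of radius $2^{-1}$ is all of $\omega^\omega$, so no two level-$0$ codes are ever $\cw_\p$-related; and after the shift $z\mapsto (i)^\smallfrown z$ the images of two level-$0$ balls become level-$1$ codes whose closed balls both equal $\set{y\mid y(0)=i}$ and hence intersect. Your ``spread'' hypothesis therefore does not supply the $\cw_\p$-incompatible witnesses that the top ball $\set{y\mid y(0)=i}$ needs in order to survive one more derivative. Concretely, $D_\p(A_{F_2}^\p)$ consists only of level-$0$ codes of the balls $\set{y\mid y(0)=n}$, no two of which are $\cw_\p$-related, and a direct computation gives $\phi_\p\bigl(\set{(i,n,b)^\smallfrown 0^\omega\mid n\in\omega,\ b\in\set{0,1}}\bigr)=2$ rather than $3$; your (correct) fact~(i) then propagates $\phi_\p(F_\alpha)=2$ for every $\alpha$, so the recursion never increases the rank.

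What is actually needed for the top ball to reach stage $\gamma+1$ is that the $\gamma$-th derivative of the shifted set contain two codes of level at least $1$ with \emph{disjoint closed balls}, i.e.\ lying in distinct cylinders of length at least $1$ below $\set{y\mid y(0)=i}$; a bare shift cannot produce this at the top stage, where only level-$0$ codes survive. One must interleave a genuine doubling at every stage --- for instance placing two recoded copies of $G$ into the depth-two cylinders $\set{y\mid y(0)=i,\ y(1)=0}$ and $\set{y\mid y(0)=i,\ y(1)=1}$ and checking that this branching persists through every derivative --- which is precisely what the paper's recursive construction of the compatible sets $S_\alpha,T_\alpha$ (duplicating each $s_i$ into $\tilde s_{2i},\tilde s_{2i+1}$ via even/odd recoding and prepending a coordinate on the $T$-side) is engineered to achieve. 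As written, fact~(ii) is false and the construction of the sets $F_\alpha$ collapses.
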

\begin{proof}
First, let $F =\set{ \ol{n}\mid n\in \omega}$ and note that $\phi_\p(F)=2$. Here $\ol{n}=(n,n,n,\ldots )$ for $n\in \omega$.
We will now recursively construct compatible sets of initial segments $S_\alpha, T_\alpha$, such that the induced switch map $f_\alpha$ satisfies $\phi_{\p_{f_\alpha}}(F)=\alpha$ for all $2\leq \alpha <\omega_1$. 

For $\alpha=2$, let 
$S_2=\set{ s_i\in \omega^{<\omega} \mid i\in \omega}$ and $T_2=\set{t_i\in \omega^{<\omega}\mid i\in \omega},$
where $s_i=t_i=(i)$ for all $i\in \omega$. Clearly $S_2,T_2$ are compatible sets of initial segments.

Now assume that we have built the compatible sets of initial segments $S_\alpha=\set{s_i\in \omega^{<\omega}\mid i\in \omega}$ and $T_\alpha=\set{t_i\in \omega^{<\omega}\mid i\in \omega}$ 
for some $\alpha<\omega_1$. 
Then put \[ S_{\alpha+1}=\lbrace \tilde{s}_i\in \omega^{<\omega}\mid i\in \omega\rbrace, 
\]
where $\tilde{s}_{2i}$ and $\tilde{s}_{2i+1}$ is obtained from $s_i$ by replacing any occurrence of $n$ in $s_i(0)^\smallfrown s_i$ with $2n$ and $2n+1$, respectively, for all $n\in \omega$. Also put
\[ T_{\alpha+1}=\lbrace \tilde{t}_i\in \omega^{<\omega}\mid i\in \omega\rbrace, 
\]
where $\tilde{t}_{2i}=(0)^\smallfrown\tilde{t}^0_{2i}$ and $\tilde{t}_{2i+1}=(1)^\smallfrown \tilde{t}^0_{2i+1}$,  and $\tilde{t}_{2i}^0$ and $\tilde{t}^0_{2i+1}$ is obtained from $t_i$ by replacing any occurrence of $n$ in $t_i$ with $2n$ and $2n+1$, respectively, for all $n\in \omega$. It is  straightforward to check that if $S_\alpha,T_\alpha$ are compatible sets of initial segments, then $S_{\alpha+1},T_{\alpha+1}$ are compatible, as well.

Next let $\lambda<\omega_1$ be a limit ordinal and assume we have constructed compatible sets of initial segments $$S_\beta=\set{s_k^\beta\in \omega^{<\omega}\mid k\in \omega} \quad \text{and}\quad T_\beta=\set{t_k^\beta\in \omega^{<\omega}\mid k\in \omega}$$ for all $\beta<\lambda$.  Then let $(\beta_i)_i<\lambda$ be an increasing sequence such that $\bigcup_{i\in \omega}\beta_i =\lambda$ and fix an enumeration $(p_i)_i$ of the prime numbers. Then put 
\[ S_{\lambda}=\lbrace \tilde{s}_{i,k}\in \omega^{<\omega}\mid i,k\in \omega\rbrace, 
\]
where $\tilde{s}_{i,k}$  is obtained from $s_k^{\beta_i}$ by replacing any occurrence of $n$ in $s_k^{\beta_i}(0)^\smallfrown s_k^{\beta_i}$ with $p_i^{n+1}$ for all $n\in \omega$. Also put
\[ T_{\lambda}=\lbrace \tilde{t}_{i,k}\in \omega^{<\omega}\mid i,k\in \omega\rbrace, 
\]
where $\tilde{t}_{i,k}=(p_i)^\smallfrown \tilde{t}_{i,k}^0$ and $\tilde{t}_{i,k}^0$ is obtained from $t^{\beta_i}_k$ by replacing any occurrence of $n$ in $t_k^{\beta_i}$ by $p_i^{n+1}$ for all $n\in \omega$. Again, it is easy to check that $S_\lambda, T_\lambda$ are compatible sets of initial segments.

Finally, the construction of $S_\alpha,T_\alpha$ for $\alpha<\omega_1$  ensures that a straightforward induction argument shows that $\phi_{\p_\alpha}(F)=\phi_\p(f_\alpha(F))=\alpha$.

\end{proof}

 Note that if $d$ is an ultra-metric on a space $X$ and $f\colon X\to X$ is a homeomorphism, then $d_f$ is also an ultra-metric. Therefore it follows by Proposition \ref{indepdenseultra} that the presentations constructed in the theorem above are independent of the dense sequence.  Thus we obtain the following corollary.

\begin{cor} Let $\p=(\omega^\omega,\rho,\ol{x})$ be a presentation of $\omega^\omega$, where $\rho$ is the standard ultra-metric on $\omega^\omega$. There exists $F\in F_{\aleph_0}(\omega^\omega)$ such that for each $\alpha<\omega_1$ there is a presentation $\p_\alpha=(\omega^\omega,d_\alpha,\ol{x})$ satisfying $\phi_\p(F)+\alpha<\phi_{\p_\alpha}(F)$.
\end{cor}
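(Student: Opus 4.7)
The plan is to deduce this corollary essentially as a bookkeeping consequence of Theorem \ref{strongdepmet} together with the ultrametric-independence result, Proposition \ref{indepdenseultra}, and the observation recorded in the remark preceding the corollary.

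First, I would fix the witness $F \in F_{\aleph_0}(\omega^\omega)$ supplied by Theorem \ref{strongdepmet}, so that $\phi_\p(F) = 2$ and, for every countable ordinal $\beta \geq 2$, there is a homeomorphism $f_\beta \colon \omega^\omega \to \omega^\omega$ (arising from a switch map on compatible sets of initial segments) with $\phi_{\p_{f_\beta}}(F) = \beta$. Given $\alpha < \omega_1$, the goal is to exhibit a presentation of the form $\p_\alpha = (\omega^\omega, d_\alpha, \ol{x})$, i.e.\ using the \emph{original} dense sequence $\ol{x}$, with rank exceeding $2 + \alpha = \phi_\p(F) + \alpha$. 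The natural candidate metric is $d_\alpha := d_{f_\beta}$ for any fixed $\beta < \omega_1$ with $\beta > 2 + \alpha$ (for instance $\beta = \alpha + 3$).

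The only thing to justify is that swapping the dense sequence $\ol{y}$ (defined from $f_\beta$ and $\ol{x}$ in the definition of $\p_{f_\beta}$) for $\ol{x}$ does not alter the rank of $F$. This is exactly where the remark after Theorem \ref{strongdepmet} is used: since $\rho$ is an ultrametric and $f_\beta$ is a homeomorphism, the pulled-back metric $d_{f_\beta}(x,y) = \rho(f_\beta(x), f_\beta(y))$ is again an ultrametric on $\omega^\omega$. Proposition \ref{indepdenseultra} then gives
\[
\phi_{(\omega^\omega, d_{f_\beta}, \ol{x})}(F) \;=\; \phi_{(\omega^\omega, d_{f_\beta}, \ol{y})}(F) \;=\; \phi_{\p_{f_\beta}}(F) \;=\; \beta.
\]
So setting $\p_\alpha = (\omega^\omega, d_{f_\beta}, \ol{x})$ yields $\phi_{\p_\alpha}(F) = \beta > 2 + \alpha = \phi_\p(F) + \alpha$, completing the argument.

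There is no genuine obstacle here: the corollary is purely a corollary. The only slightly subtle point, and the one worth being explicit about in the write-up, is the verification that an ultrametric is preserved under pullback by a homeomorphism, since this is precisely what allows Proposition \ref{indepdenseultra} to decouple the dense sequence $\ol{y}$ inherent to the construction of $\p_{f_\beta}$ from the fixed sequence $\ol{x}$ required by the statement of the corollary. Once that is noted, the rest is choosing any $\beta < \omega_1$ strictly larger than $2+\alpha$ and invoking Theorem \ref{strongdepmet}.
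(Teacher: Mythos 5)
Your proposal is correct and is essentially the paper's own argument: the paper derives the corollary from Theorem \ref{strongdepmet} by noting that $d_{f_\alpha}$ is again an ultrametric and invoking Proposition \ref{indepdenseultra} to swap the dense sequence $\ol{y}$ for $\ol{x}$ without changing the rank. Your explicit choice of $\beta=\alpha+3$ and the check that $\beta>2+\alpha$ in ordinal arithmetic is the only bookkeeping the paper leaves implicit.
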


Before we end this section, we will point out two more direct consequences of Theorem \ref{strongdepmet} that will become useful in the next section.

\begin{cor}\label{descritebairearbit} Let $\p=(\omega^\omega,\rho,\ol{y})$ be a presentation of $\omega^\omega$, where $\rho$ is the standard ultra-metric on $\omega^\omega$. For each $\alpha<\omega_1$ there is a discrete $F\in F_{\aleph_0}(\omega^\omega)$ with $\phi_\p(F)=\alpha$.
\end{cor}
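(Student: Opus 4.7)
The plan is to observe that this corollary is essentially a direct repackaging of Theorem \ref{strongdepmet} via Proposition \ref{changemetric}. Concretely, Theorem \ref{strongdepmet} hands us a fixed set $F_0=\set{\ol{n}\mid n\in \omega}$ with $\phi_\p(F_0)=2$ and, for each $2\leq \alpha<\omega_1$, a homeomorphism $f_\alpha\colon \omega^\omega\to\omega^\omega$ such that $\phi_{\p_{f_\alpha}}(F_0)=\alpha$. By Proposition \ref{changemetric} this means $\phi_\p(f_\alpha(F_0))=\alpha$, so the candidate for the desired set is simply $F_\alpha=f_\alpha(F_0)$.

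First I would handle the two small cases by hand: for $\alpha=0$ take $F=\emptyset$, and for $\alpha=1$ take $F=\set{\ol{0}}$, both of which are trivially discrete, closed and countable, and a direct inspection of $D_\p$ gives $\phi_\p$ equal to $0$ and $1$ respectively (in the singleton case any two $\p$-balls meeting $F$ share the point $\ol 0$, so they can never be $\cw_\p$-apart, forcing $D_\p(A_F^\p)=\emptyset$).

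For $2\leq \alpha<\omega_1$, I would then verify that $F_\alpha=f_\alpha(F_0)$ is discrete, closed, and countable in $\omega^\omega$. The set $F_0$ is closed in $\omega^\omega$ because in the ultra-metric $\rho$ we have $\rho(\ol{n},\ol{m})=2^{-1}$ for $n\neq m$, so $F_0$ is uniformly discrete and thus closed; in particular every point of $F_0$ is isolated in $F_0$. Since $f_\alpha$ is a homeomorphism of $\omega^\omega$ onto itself, it maps closed sets to closed sets and preserves isolated points of a subspace, so $F_\alpha$ is again a discrete closed countable subset of $\omega^\omega$. Applying Proposition \ref{changemetric} gives $\phi_\p(F_\alpha)=\phi_{\p_{f_\alpha}}(F_0)=\alpha$, completing the argument.

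There is no real obstacle to overcome: all the work is already contained in Theorem \ref{strongdepmet}. The only minor point that needs attention is the observation that discreteness (in the subspace-topology sense used in the paper) transfers through the homeomorphism $f_\alpha$, which follows immediately from $f_\alpha|_{F_0}\colon F_0\to F_\alpha$ being a homeomorphism between subspaces.
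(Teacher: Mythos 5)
Your proposal is correct and follows the same route as the paper: take $F$ and $f_\alpha$ from Theorem \ref{strongdepmet}, use Proposition \ref{changemetric} to get $\phi_\p(f_\alpha(F))=\phi_{\p_{f_\alpha}}(F)=\alpha$, note that discreteness and closedness are preserved by the homeomorphism, and handle $\alpha=0,1$ with $\emptyset$ and a singleton. Nothing to add.
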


\begin{proof}
Let $F$ and  $f_\alpha$ be as in the proof of Theorem \ref{strongdepmet} for each $2\leq \alpha<\omega_1$. One then has that $f_\alpha(F)\in F_{\aleph_0}(\omega^\omega)$  is discrete and satisfies $\phi_\p(f_\alpha(F))=\alpha$.  
Moreover, $\phi_\p(\emptyset)=0$ and $\phi_\p(\set{x})=1$ for all $x\in \omega^\omega$. 
\end{proof}

\begin{cor}\label{discretearbit} For each $1\leq\alpha<\omega_1$ there is a presentation $\p_\alpha=(\omega,d_\alpha, \omega)$ of the discrete Polish space $\omega$ such that ${\rm diam}_{d_\alpha}(\omega)<\infty$ and  $\phi_{\p_\alpha}(\omega)=\alpha$. 
\end{cor}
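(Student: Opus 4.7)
My plan is to handle the case $\alpha=1$ separately with the discrete metric, and for $\alpha\geq 2$ to pull back the presentation supplied by Corollary \ref{descritebairearbit} from $\omega^\omega$ to $\omega$ via a bijection. For $\alpha=1$ I take $d_1(i,j)=1$ for $i\neq j$: every $\p_1$-ball is then a singleton, no $n$ admits two $\cw_{\p_1}$-disjoint $\prec_{\p_1}$-extensions, and $\phi_{\p_1}(\omega)=1$ is immediate. For $\alpha\geq 2$, Corollary \ref{descritebairearbit} supplies a discrete (hence infinite) $F\in F_{\aleph_0}(\omega^\omega)$ with $\phi_\p(F)=\alpha$, where by Proposition \ref{indepdenseultra} I may choose the dense sequence of $\p=(\omega^\omega,\rho,\ol{x})$ to list $F$; I enumerate $F=\set{f_k\mid k\in\omega}$ bijectively and fix indices $i_k$ with $x_{i_k}=f_k$. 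I then define $d_\alpha(i,j)=\rho(f_i,f_j)$ on $\omega$, which is an ultrametric with $\diam_{d_\alpha}(\omega)\leq 1$; completeness and compatibility with the discrete topology on $\omega$ follow from the closedness and discreteness of $F$ in $\omega^\omega$. Set $\p_\alpha=(\omega,d_\alpha,\omega)$.

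The remaining task is to show $\phi_{\p_\alpha}(\omega)=\alpha$ by two applications of Lemma \ref{homoless}. For $\phi_{\p_\alpha}(\omega)\leq\alpha$, I define $\psi\colon\omega^2\to\omega^2$ by $\psi((k,l))=(i_k,l)$; then $\ba_\p(\psi(m))$ is a $\rho$-ball around $f_{m(0)}\in F$, so $\psi(m)\in A_F^\p$, and Lemma \ref{homoless} yields $|\omega^2|_{\p_\alpha}\leq|\psi(\omega^2)|_\p\leq|A_F^\p|_\p=\alpha$. For $\alpha\leq\phi_{\p_\alpha}(\omega)$, for each $n\in A_F^\p$ I pick some $f_{k(n)}\in\ba_\p(n)\cap F$ and set $\psi'(n)=(k(n),n(1))\in A_\omega^{\p_\alpha}=\omega^2$; Lemma \ref{homoless} then gives $\alpha=|A_F^\p|_\p\leq|\psi'(A_F^\p)|_{\p_\alpha}\leq\phi_{\p_\alpha}(\omega)$.

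The main obstacle --- and where the ultrametric structure of $\rho$ is essential --- will be the verification that both $\psi$ and $\psi'$ preserve the relations $\prec$ and $\cw$, since $\rho$-balls in $\omega^\omega$ are a priori strictly larger than their intersections with $F$. For $\psi$ and $\prec$: if $m_1\prec_{\p_\alpha}m_2$, then $f_{m_2(0)}\in B_\rho(f_{m_1(0)},2^{-m_1(1)-1})$, so the ultrametric identity gives $B_\rho(f_{m_1(0)},2^{-m_1(1)-1})=B_\rho(f_{m_2(0)},2^{-m_1(1)-1})$; since $\rho$ takes values only in $\set{0}\cup\set{2^{-j-1}\mid j\geq 0}$ and $m_2(1)>m_1(1)$, one has $\ol{B}_\rho(f_{m_2(0)},2^{-m_2(1)-1})=B_\rho(f_{m_2(0)},2^{-m_2(1)})\subseteq B_\rho(f_{m_2(0)},2^{-m_1(1)-1})$, yielding $\cba_\p(\psi(m_2))\subseteq\ba_\p(\psi(m_1))$. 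For $\psi$ and $\cw$: any common point of $\cba_\p(\psi(m_1))$ and $\cba_\p(\psi(m_2))$ would force by the ultrametric triangle $d_\alpha(m_1(0),m_2(0))\leq\max(2^{-m_1(1)-1},2^{-m_2(1)-1})$, placing (WLOG $m_1(1)\leq m_2(1)$) $m_2(0)$ into both $\cba_{\p_\alpha}(m_1)$ and $\cba_{\p_\alpha}(m_2)$, contradicting $m_1\cw_{\p_\alpha}m_2$. The verifications for $\psi'$ are analogous: the key observation is that since $f_{k(n)}\in\ba_\p(n)$, the ultrametric property gives $\ba_\p(n)=B_\rho(f_{k(n)},2^{-n(1)-1})$ and $\cba_\p(n)=\ol{B}_\rho(f_{k(n)},2^{-n(1)-1})$, so the $\p$-relations on $A_F^\p$ transfer directly via the isometry $k\mapsto f_k$ to the corresponding $\p_\alpha$-relations on $\omega$.
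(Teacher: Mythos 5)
Your proof is correct and follows essentially the same route as the paper: both handle $\alpha=1$ with the discrete metric and, for $\alpha\geq 2$, transport the standard ultrametric $\rho$ from an infinite discrete closed subset of $\omega^\omega$ of rank $\alpha$ (supplied by Theorem \ref{strongdepmet}, i.e.\ Corollary \ref{descritebairearbit}) to $\omega$ via a bijection. The only difference is that you spell out, via Lemma \ref{homoless} and the ultrametric re-centering identities, why the rank is preserved under this pullback --- a verification the paper leaves implicit.
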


\begin{proof} For each $2\leq \alpha<\omega_1$ let $F$ and $f_\alpha$ be as in the proof of Theorem \ref{strongdepmet}, and fix a homeomorphism $g_\alpha\colon \omega\rightarrow f_\alpha(F)$. Let $d_\alpha$ denote the metric on $\omega$ given by $d_{\alpha}(i,j)=\rho(g_\alpha(i),g_\alpha(j))$ for all $i,j\in \omega$, and put $\p_\alpha=(\omega,d_\alpha,\omega)$ for all $2\leq \alpha<\omega_1$. Then we must have $\phi_{\p_\alpha}(\omega)=\alpha$. Moreover, as $\rho< 1$, we also have $d_\alpha<1$ for all $2\leq \alpha <\omega_1$.
 Clearly the presentation $\p=(\omega,d,\omega)$, where $d(i,j)=1$ for all $i,j\in \omega$ with $i\neq j$, satisfies $\phi_\p(\omega)=1$.

\end{proof}
Note that if $\p=(X,d,\ol{x})$ is a presentation of a Polish space $X$, then the metric $d_b$ given by
\[d_b(x,y)=\min\set{d(x,y),1}\]
for all $x,y\in X$ satisfies $\diam_{d_b}(X)<\infty$, and for $\s=(X,d_b,\ol{x})$ we have $\phi_\p=\phi_\s$.

\section{The relation to the Cantor-Bendixson rank}

In this section we will investigate how these ranks relate to the standard Cantor-Bendixson rank. It turns out,  that it depends heavily on how big the considered Polish space is. Recall that for any Polish space $X$ and $F\in F(X)$ we let $|F|_\cb$ denote the Cantor-Bendixson rank of $F$, and for all $\alpha<\omega_1$ we let $F^\alpha$ denote the iterated Cantor-Bendixson derivative of $F$. First we will argue that the ranks constructed in Section 2 refine the Cantor-Bendixson rank.

\begin{prop} Let $\p$ be any presentation of a Polish space $X$. For $F\in F(X)$ we have $A_{F^\alpha}^\p\subseteq D_\p^\alpha(A_F^\p)$ for all $\alpha <\omega_1$. In particular, $F^\alpha\subseteq F_{D_\p^\alpha(A_F^\p)}^\p $ for all $\alpha <\omega_1$. 
\end{prop}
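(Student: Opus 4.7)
The plan is to prove the first inclusion by transfinite induction on $\alpha<\omega_1$; the ``in particular'' clause will then follow at once from the monotonicity of $A\mapsto F_A^\p = \pi_\p([A]_\p)$ under inclusion of $A$ (plus Proposition \ref{correspondence}, which gives $F^\alpha = F_{A_{F^\alpha}^\p}^\p$ since $F^\alpha$ is closed and hence $A_{F^\alpha}^\p$ is $\p$-closed and $\p$-pruned).

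The base case $\alpha = 0$ is immediate, and the limit case is essentially formal: if $\lambda$ is a limit ordinal and $n\in A_{F^\lambda}^\p$, then any witness $x\in B_\p(n)\cap F^\lambda = B_\p(n)\cap \bigcap_{\beta<\lambda}F^\beta$ shows $n\in A_{F^\beta}^\p\subseteq D_\p^\beta(A_F^\p)$ for every $\beta<\lambda$ by the inductive hypothesis, and therefore $n\in D_\p^\lambda(A_F^\p) = \bigcap_{\beta<\lambda}D_\p^\beta(A_F^\p)$.

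The main obstacle, and the step that actually uses the geometry of the presentation, is the successor case. Fix $n\in A_{F^{\alpha+1}}^\p$ and pick $x\in B_\p(n)\cap F^{\alpha+1}$. Since $F^{\alpha+1}$ consists of the limit points of $F^\alpha$, the open ball $B_\p(n)$ contains two distinct points $y_1, y_2 \in F^\alpha$. I will then choose indices $u,v\in\omega^2$ with large common second coordinate $k$ so that $n\prec_\p u, v$, $u\cw_\p v$, and $y_1\in B_\p(u)$, $y_2\in B_\p(v)$. Concretely, using that $B_\p(n)$ is open and $y_1\neq y_2$, pick $k>n(1)$ so large that $2^{-k-1}$ is smaller than $d(y_1,y_2)/3$ and than the ``distance to the boundary'' quantities $2^{-n(1)-1}-d(y_i,x_{n(0)})$ for $i=1,2$; then use the density of $\ol{x}$ to locate $u(0),v(0)$ with $d(x_{u(0)},y_1),d(x_{v(0)},y_2)<2^{-k-2}$, and set $u(1)=v(1)=k$. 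Two standard triangle-inequality estimates then verify $\ol{B}_\p(u),\ol{B}_\p(v)\subseteq B_\p(n)$ and $\ol{B}_\p(u)\cap\ol{B}_\p(v)=\emptyset$, i.e.\ $n\prec_\p u,v$ and $u\cw_\p v$.

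To conclude the successor step I combine this with the inductive hypothesis: since $y_1\in B_\p(u)\cap F^\alpha$ and $y_2\in B_\p(v)\cap F^\alpha$, both $u$ and $v$ lie in $A_{F^\alpha}^\p\subseteq D_\p^\alpha(A_F^\p)$, and since $F^{\alpha+1}\subseteq F^\alpha$ we also have $n\in A_{F^\alpha}^\p\subseteq D_\p^\alpha(A_F^\p)$. By the definition of $D_\p$, this gives $n\in D_\p^{\alpha+1}(A_F^\p)$, completing the induction. The only real delicacy throughout is the bookkeeping in the successor step to guarantee that the chosen $k$ simultaneously makes the two closed balls disjoint and contained in $B_\p(n)$; everything else is formal.
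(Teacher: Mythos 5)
Your proof is correct and follows essentially the same route as the paper: transfinite induction where the base and limit cases are formal and the successor case extracts two distinct points of $F^\alpha$ from $B_\p(n)$ and packages them into indices $u,v$ with $n\prec_\p u,v$ and $u\cw_\p v$ (you are in fact slightly more careful than the paper in noting that $n$ itself lies in $D_\p^\alpha(A_F^\p)$). The only quibble is a harmless constant in the bookkeeping: with the stated choice $2^{-k-1}<2^{-n(1)-1}-d(y_i,x_{n(0)})$ the triangle inequality gives $d(z,x_{n(0)})\leq 2^{-k-1}+2^{-k-2}+d(y_i,x_{n(0)})$, which just overshoots, so one should instead demand $2^{-k}<2^{-n(1)-1}-d(y_i,x_{n(0)})$.
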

\begin{proof}
We prove the result by induction on $\alpha<\omega_1$. The case $\alpha=0$ is trivial. Assume therefore that $A_{F^\alpha}^\p\subseteq D_\p^\alpha(A_F^\p)$ for some $\alpha<\omega_1$ and let $n\in A_{F^{\alpha+1}}^\p$. Then, by definition, $\bap(n)\cap F^{\alpha +1}\neq \emptyset$ and hence there must be $x,y\in \bap(n)\cap F^\alpha$ with $x\neq y$. Thus, we can find $k,l\in A_{F^\alpha}^\p\subseteq D_\p^\alpha(A_F^\p)$ such that $n\pc_\p k,l$ and $k\cw_\p l$.  Therefore we conclude
$n\in D_\p^{\alpha +1}(A_F^\p)$.

Next, assume $A_{F^\beta}^\p\subseteq D_\p^\beta(A_F^\p)$ for all $\beta <\lambda$ for some limit ordinal $\lambda<\omega_1$ and let $n\in A_{F^\lambda}^\p$. Since $A_{F^\lambda}^\p\subseteq A_{F^\beta}^\p\subseteq D_\p^\beta(A_F^\p)$ for all $\beta<\lambda$, we obtain that $n\in D_\p^\lambda(A_F^\p)$, as desired. 
\end{proof}

From this proposition we easily get the following result.
\begin{cor}\label{lowerbound} Let $\p$ be any presentation of a Polish space $X$. For all $F\in F_{\aleph_0}(X)$ we have $|F|_{\cb}\leq \phi_\p(F)$.
\end{cor}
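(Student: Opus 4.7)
The plan is to combine the previous proposition with the defining properties of the two ranks via a short unpacking of definitions. Let $F\in F_{\aleph_0}(X)$ and set $\alpha=\phi_\p(F)=|A_F^\p|_\p$. First I would invoke the construction in the proof of Theorem \ref{RankDef}: since $F\in F_{\aleph_0}(X)$, we have $A_F^\p\in \Omega_\p$, and by definition of $|A_F^\p|_\p$ the derivative stabilizes at $\alpha$ with $D_\p^\alpha(A_F^\p)=\emptyset$.

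Next I would apply the preceding proposition at this stage $\alpha$, which gives
\[ A_{F^\alpha}^\p \subseteq D_\p^\alpha(A_F^\p)=\emptyset. \]
So $A_{F^\alpha}^\p=\emptyset$, i.e.\ no basic $\p$-ball $\bap(n)$ meets $F^\alpha$.

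The last step is the observation that for any $G\in F(X)$, having $A_G^\p=\emptyset$ forces $G=\emptyset$: if some $y\in G$ existed, density of $\ol{x}$ would let me pick $i,k\in\omega$ with $d(y,x_i)<2^{-k-1}$, making $(i,k)\in A_G^\p$. Applying this to $G=F^\alpha$ yields $F^\alpha=\emptyset$, so $|F|_{\cb}\leq \alpha=\phi_\p(F)$, as required. There is no real obstacle here; the work is entirely in the previous proposition, and this corollary is a clean consequence once one checks that vanishing of the derived set $D_\p^\alpha(A_F^\p)$ transfers to vanishing of $F^\alpha$ through the correspondence of Proposition \ref{correspondence}.
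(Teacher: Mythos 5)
Your argument is correct and is exactly the intended derivation: the paper leaves this corollary as an immediate consequence of the preceding proposition, and your unpacking (that $D_\p^{\phi_\p(F)}(A_F^\p)=\emptyset$ for countable $F$, hence $A_{F^{\phi_\p(F)}}^\p=\emptyset$, hence $F^{\phi_\p(F)}=\emptyset$ by density of the chosen sequence) fills in precisely the routine steps the paper omits. No issues.
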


\subsection{A characterization of compact spaces}

In this subsection we will characterize the compact Polish spaces in terms of how the ranks constructed in Section 2 relate to the Cantor-Bendixson rank.  More precisely we will prove the following theorem.
\begin{thm}\label{A} Let $X$ be a Polish space. The following are equivalent:
\begin{itemize}
\item[1)] $X$ is compact.
\item[2)] The family $(\phi_\p)_\p$, where $\p$ varies over all presentations of $X$, is uniformly bounded by the Cantor-Bendixson rank.
\item[3)] For any presentation $\p$ of $X$ we have $\phi_\p(F)<\omega|F |_{\cb}$ for all $F\in F_{\aleph_0}(X)$.
\end{itemize}
\end{thm}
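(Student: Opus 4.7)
My plan is to establish the chain $(1)\Rightarrow(3)\Rightarrow(2)\Rightarrow(1)$. The step $(3)\Rightarrow(2)$ is immediate with $f(\alpha)=\omega\alpha+1$.

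For $(1)\Rightarrow(3)$, the central step is a transfinite induction on $\alpha<\omega_1$ proving
\[
(\ast)\qquad n\in D_\p^{\omega\alpha}(A_F^\p)\ \Longrightarrow\ \cbap(n)\cap F^\alpha\neq\emptyset.
\]
The base case is immediate. The limit step follows from the finite intersection property applied to the nested nonempty compact sets $\cbap(n)\cap F^\beta$ ($\beta<\lambda$) inside compact $X$. For the successor $\alpha=\gamma+1$, the assumption $n\in\bigcap_k D_\p^{\omega\gamma+k}(A_F^\p)$ yields, for every $k$, a complete binary tree of depth $k$ of descendants of $n$ whose $2^k$ leaves lie in $D_\p^{\omega\gamma}(A_F^\p)$ with pairwise disjoint $\cbap$'s (inherited from $\cw_\p$-disjointness at each level); applying $(\ast)_\gamma$ at each leaf produces $2^k$ distinct points of $F^\gamma$ in $\bap(n)$ for every $k$, so $\bap(n)\cap F^\gamma$ is infinite, and any of its accumulation points in the compact $\cbap(n)\cap F^\gamma$ lies in $F^{\gamma+1}\cap\cbap(n)$.

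From $(\ast)$ one immediately obtains $\phi_\p(F)\leq\omega|F|_\cb$. For the strict bound, compactness forces $|F|_\cb$ to be a successor ordinal $\beta+1$ (else FIP would yield $F^{|F|_\cb}\neq\emptyset$), so $F^\beta$ is a nonempty closed discrete, hence finite, subset of $X$. Applying $(\ast)_\beta$ to the two $\cw_\p$-disjoint descendants witnessing $n\in D_\p^{\omega\beta+1}(A_F^\p)$ gives two distinct points of $F^\beta$ in $\bap(n)$, so $D_\p^{\omega\beta+1}(A_F^\p)\subseteq A_{F^\beta}^\p$; since $\phi_\p(F^\beta)<\omega$ whenever $F^\beta$ is finite (indeed $D_\p^k(A_{F^\beta}^\p)\subseteq\{n:|\bap(n)\cap F^\beta|\geq 2^k\}$), iterating $D_\p$ yields $\phi_\p(F)<\omega(\beta+1)=\omega|F|_\cb$.

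For $(2)\Rightarrow(1)$ I argue by contraposition: if $X$ is not compact, extracting a sequence with no convergent subsequence produces a closed discrete countably infinite subset $G=\{g_n\}_{n\in\omega}$ of $X$ with $|G|_\cb=1$, and it suffices to exhibit, for each $\alpha<\omega_1$, a presentation $\p_\alpha$ of $X$ with $\phi_{\p_\alpha}(G)\geq\alpha$ (then no $f\colon\omega_1\to\omega_1$ can satisfy $\phi_{\p_\alpha}(G)\leq f(1)$ for all $\alpha$). I fix the ultrametric $d_\alpha$ on $\omega$ from Corollary \ref{discretearbit} (pulled back from the standard ultrametric on $\omega^\omega$, hence with values in $\{0\}\cup\{2^{-m-1}:m\in\omega\}$) satisfying $\phi_{\q_\alpha}(\omega)=\alpha$, transport it to $G$ via $\omega\cong G$, and construct a complete compatible metric $\rho_\alpha$ on $X$ restricting to a suitably scaled copy of $d_\alpha$ on $G$, together with a dense sequence of $X$ whose first $\omega$ entries enumerate $G$. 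A shift $\psi(i,k)=(i',k+K)$ (where $x_{i'}=g_i$) embeds $A_\omega^{\q_\alpha}$ into $A_G^{\p_\alpha}$, and Lemma \ref{homoless} then yields $\phi_{\p_\alpha}(G)\geq\phi_{\q_\alpha}(\omega)=\alpha$. The main obstacle is the construction of $\rho_\alpha$ itself: one must simultaneously extend a scaled copy of $d_\alpha$ from $G$ to a complete compatible metric on $X$ and control its behaviour off $G$ so that both the ball-inclusion ($\prec_{\p_\alpha}$) and the ball-disjointness ($\cw_{\p_\alpha}$) relations in $X$ reflect the corresponding relations in $(\omega,d_\alpha)$. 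Since balls in $X$ centred at $g_n$ contain strictly more than their trace on $G$ (as must happen for any connected $X$, where non-$G$ points accumulate to $G$), the transfer of $\cw$-disjointness is delicate, and resolving the tension between $\prec$- and $\cw$-preservation requires exploiting the discrete value set of the ultrametric $d_\alpha$ together with a careful construction of $\rho_\alpha$ off $G$, using an auxiliary complete compatible metric on $X$ to separate non-$G$ points from $G$ at the scales relevant to $\psi$.
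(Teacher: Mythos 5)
Your implications $(3)\Rightarrow(2)$ and $(1)\Rightarrow(3)$ are essentially correct and follow the paper's route: your induction $(\ast)$ is the paper's Lemma \ref{locallyupperbound} (stated there locally, under the hypothesis that $\ol{\bap(n)\cap F}$ is compact), and your derivation of strictness --- showing $|F|_\cb=\beta+1$ is a successor with $F^\beta$ finite and then trapping $D_\p^{\omega\beta+1+k}(A_F^\p)$ inside $\set{n \mid |\bap(n)\cap F^\beta|\geq 2^k}$ --- is a legitimate and arguably slicker alternative to the paper's, which instead proves that $\phi_\p(F)$ itself is a successor ordinal (Propositions \ref{bodysucc} and \ref{successor}) and then applies the lemma once.

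The gap is in $(2)\Rightarrow(1)$. Your set-up (infinite closed discrete $G$, an ultrametric on $\omega$ from Corollary \ref{discretearbit} of large rank, extension to a complete compatible metric on $X$ via Lemma \ref{extendmetric}, a dense sequence beginning with an enumeration of $G$) is exactly the paper's, but the transfer of the rank from $(G,d_\alpha)$ to $(X,\rho_\alpha)$ is asserted rather than proved, and the mechanism you propose cannot work as stated. A single shift $\psi(i,k)=(i',k+K)$ cannot satisfy both hypotheses of Lemma \ref{homoless}: preserving $\cw$ forces you to shrink radii (so that disjointness of traces on $G$ yields, by the triangle inequality, disjointness of the fatter ambient balls), while preserving $\pc$ forces you to grow them (with shrunken radii the triangle-inequality estimate for $\cbap(\psi(m))\subseteq\bap(\psi(n))$ only gives $\leq$ in the borderline case $m(1)=n(1)+1$, so containment can fail at points of $X\setminus G$). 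You name this tension but defer its resolution to an unspecified construction of $\rho_\alpha$ off $G$; that deferred step is precisely the missing content, and it would have to be verified simultaneously for all relevant pairs while keeping $\rho_\alpha$ complete and compatible. The paper resolves the tension combinatorially rather than metrically: it uses the radius-growing map $n\mapsto(2n(0),n(1)-1)$, so that $\pc$ transfers outright, and recovers $\cw$ by descending three levels in the derivative tree --- among the eight pairwise $\cw_\s$-separated nodes three steps below $n$ in $D_\s^{3\beta}(A_F^\s)$ one finds two whose enlarged ambient balls remain disjoint, because the ultrametric takes values in $\set{2^{-m-1}\mid m\in\omega}$. This gives $\phi_\s(F)\leq 3\phi_\p(F)+1$, which suffices after arranging $\phi_\s(F)>3\alpha$ at the outset. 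Without this (or an equivalent) transfer argument your proof of $(2)\Rightarrow(1)$ is incomplete.
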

 It is clear that $(3)\implies(2)$. Below we will show $(2)\implies (1)$ and $(1)\implies (3)$.
We will begin with the latter, where the goal is to prove that if $X$ is compact and $\p$ is a presentation of $X$, then $\phi_\p(F)< \omega|F|_{\cb}$ for all $F\in F_{\aleph_0}(X)$.
In order to obtain strict inequality we begin by proving that $\phi_\p(F)$ is a successor for any $F\in F_{\aleph_0}(X)$.  This will follow from the next couple of results.

\begin{prop}\label{independ} Let $\p$ be a presentation of any Polish space $X$ and let $F\in F(X)$. If $(m_i)_i,(n_i)_i\in [A_F^\p]_\p$ satisfy $\pi_\p((m_i)_i)=\pi_\p((n_i)_i)$, then 
\[(n_i)_i\in [D_\p^\alpha(A_F^\p)]_\p \iff (m_i)_i\in [D_\p^\alpha(A_F^\p)]_\p\]
for all $\alpha<\omega_1$. 
\end{prop}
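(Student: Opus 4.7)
The plan is to prove the statement by transfinite induction on $\alpha<\omega_1$, showing the one-sided implication (the other follows by symmetry). The base case $\alpha=0$ is just the hypothesis $(n_i)_i,(m_i)_i\in [A_F^\p]_\p$.

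The workhorse is a small geometric lemma which I would state once and reuse: \emph{if $(m_i)_i,(n_i)_i\in [A_F^\p]_\p$ have the same projection $x$, then for each fixed $j$ there exists $N$ such that $m_j\prec_\p n_i$ for all $i\geq N$.} This follows because $\p$-compatibility forces $n_i(1)<n_{i+1}(1)$, so $n_i(1)\to \infty$ and $\diam_d(\cbap(n_i))\leq 2^{-n_i(1)}\to 0$. Since $x\in \bigcap_i \cbap(n_i)$ and $x\in \bap(m_j)$, which is open, for sufficiently large $i$ one has both $n_i(1)>m_j(1)$ and $\cbap(n_i)\subseteq \bap(m_j)$, giving $m_j\prec_\p n_i$.

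For the successor step, assume $(n_i)_i\in [D_\p^{\alpha+1}(A_F^\p)]_\p$ and fix $j$. Pick $i\geq N$ so that $m_j\prec_\p n_i$. Since $n_i\in D_\p^{\alpha+1}(A_F^\p)$, there exist $u,v\in D_\p^\alpha(A_F^\p)$ with $n_i\prec_\p u,v$ and $u\cw_\p v$. Transitivity of $\prec_\p$ (which follows directly from $\cbap(k)\subseteq \bap(m)\subseteq \cbap(m)\subseteq \bap(n)$ whenever $n\prec_\p m\prec_\p k$) then yields $m_j\prec_\p u,v$, so $m_j\in D_\p^{\alpha+1}(A_F^\p)$. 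Notice that the induction hypothesis is not actually invoked here; this step is purely a consequence of the geometric lemma and transitivity.

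For the limit step at $\lambda<\omega_1$, if $(n_i)_i\in [D_\p^\lambda(A_F^\p)]_\p$ then every $n_i$ lies in $D_\p^\beta(A_F^\p)$ for all $\beta<\lambda$, so $(n_i)_i\in [D_\p^\beta(A_F^\p)]_\p$ for each such $\beta$. The induction hypothesis gives $(m_i)_i\in [D_\p^\beta(A_F^\p)]_\p$ for all $\beta<\lambda$, and intersecting over $\beta$ places each $m_j$ in $D_\p^\lambda(A_F^\p)$. The main obstacle is just making the geometric lemma precise; once it is in hand, the transfinite induction is essentially bookkeeping.
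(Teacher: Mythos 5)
There is a gap in the successor step, and it sits exactly where you claim the induction hypothesis is not needed. By definition, $D_\p(A)=\set{n\in A\mid (\exists u,v\in A)\ n\pc_\p u,v \ \text{and} \ u\cw_\p v}$, so to place $m_j$ in $D_\p^{\alpha+1}(A_F^\p)=D_\p\bigl(D_\p^\alpha(A_F^\p)\bigr)$ you need, besides witnesses $u,v\in D_\p^\alpha(A_F^\p)$ with $m_j\pc_\p u,v$ and $u\cw_\p v$, that $m_j$ itself already lies in $D_\p^\alpha(A_F^\p)$; at that point in your argument you only know $m_j\in A_F^\p$. The repair is one line, and it is precisely an appeal to the induction hypothesis you disclaim: since $(n_i)_i\in[D_\p^{\alpha+1}(A_F^\p)]_\p\subseteq[D_\p^\alpha(A_F^\p)]_\p$, the statement at stage $\alpha$ gives $(m_i)_i\in[D_\p^\alpha(A_F^\p)]_\p$, hence $m_j\in D_\p^\alpha(A_F^\p)$, and then your $u,v$ finish the step. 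Your geometric lemma and the limit case are fine as written.

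Once the lemma is in hand there is a cleaner packaging, which is what the paper does. Each $D_\p^\alpha(A_F^\p)$ is $\p$-closed (recorded in the proof of Theorem \ref{RankDef}; it is its own easy induction, since $D_\p$ preserves $\p$-closedness by the transitivity you note and $\p$-closedness passes to intersections at limits). For a $\p$-closed set $A$, whether $x\in\pi_\p([A]_\p)$ then depends only on which basic balls contain $x$: indeed $x\in\pi_\p([A]_\p)$ if and only if every $n\in\omf$ with $x\in\bap(n)$ lies in $A$, the forward direction being exactly your geometric lemma. Applying this with $A=D_\p^\alpha(A_F^\p)$ yields the proposition for all $\alpha$ simultaneously, with no induction on $\alpha$ inside the proposition itself: for each $j$ your lemma produces $i$ with $m_j\pc_\p n_i\in D_\p^\alpha(A_F^\p)$, and $\p$-closedness puts $m_j$ in $D_\p^\alpha(A_F^\p)$ directly.
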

\begin{proof}
For any $\p$-closed $A\subseteq \omf$, we have 
$x\in \pi_\p([A]_\p)$ if and only if for all $m\in \omf$ such that $x\in \bap(m)$ we have $m\in A$.
\end{proof}

\begin{prop}\label{bodysucc} Let $X$ be compact and $\p$ be a presentation of $X$. If $F\in F_{\aleph_0}(X)$ is non-empty and $\alpha<\omega_1$ is least such that $[D_\p^\alpha(A_F^\p)]_\p=\emptyset$, then $\alpha$ is a successor.
\end{prop}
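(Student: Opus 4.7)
The plan is to argue by contradiction. The case $\alpha=0$ is ruled out immediately: since $F$ is non-empty, $A_F^\p$ is a non-empty $\p$-pruned set, so starting from any $n_0\in A_F^\p$ and iteratively extending via $\p$-pruning produces a $\p$-compatible sequence in $[A_F^\p]_\p$. So it suffices to rule out the possibility that $\alpha$ is a limit ordinal.

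Suppose then that $\alpha$ is a limit. For each ordinal $\beta$ set
\[F_\beta = \pi_\p\bigl([D_\p^\beta(A_F^\p)]_\p\bigr) = F^\p_{D_\p^\beta(A_F^\p)}.\]
Each set $D_\p^\beta(A_F^\p)$ is $\p$-closed (by a straightforward induction on $\beta$ using the definition of $D_\p$ in the successor case and intersections in the limit case), so each $F_\beta$ is closed in $X$. By minimality of $\alpha$, $F_\beta\neq\emptyset$ for every $\beta<\alpha$, and since the family is decreasing in $\beta$ and $X$ is compact, the sets $F_\beta$ form a decreasing family of non-empty compact subsets of $X$. By Cantor's intersection theorem, $\bigcap_{\beta<\alpha}F_\beta\neq\emptyset$.

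I would then close the argument by showing $\bigcap_{\beta<\alpha}F_\beta\subseteq F_\alpha$, which contradicts $[D_\p^\alpha(A_F^\p)]_\p=\emptyset$. Fix $x\in\bigcap_{\beta<\alpha}F_\beta$. Applying the characterization recorded in the proof of Proposition \ref{independ} to each $\p$-closed set $D_\p^\beta(A_F^\p)$, we get that for every $\beta<\alpha$ and every $m\in\omf$ with $x\in\bap(m)$ we have $m\in D_\p^\beta(A_F^\p)$. Since $\alpha$ is a limit, $D_\p^\alpha(A_F^\p)=\bigcap_{\beta<\alpha}D_\p^\beta(A_F^\p)$, so $m\in D_\p^\alpha(A_F^\p)$. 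Using surjectivity of $\pi_\p$, pick any $(n_i)_i\in[\omf]_\p$ with $\pi_\p((n_i)_i)=x$; then $x\in\bap(n_i)$ for each $i$, hence $n_i\in D_\p^\alpha(A_F^\p)$, so $(n_i)_i\in[D_\p^\alpha(A_F^\p)]_\p$ and $x\in F_\alpha$.

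The main obstacle is the final inclusion: one must be careful that the characterization ``$x\in\pi_\p([A]_\p)$ iff $x\in\bap(m)\Rightarrow m\in A$'' really lets us upgrade membership in each $F_\beta$ simultaneously to membership in $F_\alpha$, and that a $\p$-compatible sequence converging to $x$ actually exists and takes values in $D_\p^\alpha(A_F^\p)$. Everything else — $\p$-closedness of the iterated derivatives, closedness and hence compactness of each $F_\beta$, and non-emptiness by minimality of $\alpha$ — is routine bookkeeping, while compactness of $X$ is used precisely once, to pass from the non-emptiness of each $F_\beta$ to the non-emptiness of their intersection.
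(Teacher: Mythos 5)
Your proof is correct and takes essentially the same route as the paper's: compactness of $X$ forces the decreasing family of non-empty closed sets $F_\beta=\pi_\p([D_\p^\beta(A_F^\p)]_\p)$ to have a common point, and the characterization of $\pi_\p([A]_\p)$ for $\p$-closed $A$ (recorded in the proof of Proposition \ref{independ}) upgrades that point to an element of $\pi_\p([D_\p^\alpha(A_F^\p)]_\p)$, a contradiction. The only cosmetic difference is that you apply Cantor's intersection theorem to the whole family, whereas the paper extracts a convergent subsequence from points $x_i\in F_{\beta_i}$ chosen along a cofinal sequence $(\beta_i)$.
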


\begin{proof} 
First, since $F$ is non-empty, we have $\alpha>0$.
Now assume for a  contradiction that $[D_\p^\beta(A_F^\p)]_\p\neq \emptyset $ for all $\beta <\lambda$ and $[D_\p^\lambda(A_F^\p)]_\p=\emptyset$ for some limit ordinal $\lambda<\omega_1$. Fix $(\beta_i)_{i}<\lambda$ such that $\beta_i\leq \beta_{i+1}$ for all $i\in \omega$ and $\bigcup_{i\in \omega}\beta_i=\lambda$. For each $i\in \omega$, choose $x_i\in  \pi_\p([D_\p^{\beta_i}(A_F^\p)]_\p)$.
By compactness of $X$, there is $x\in X$ and a subsequence $(x_{i_k})_k\subseteq (x_i)_i$ such that $x_{i_k}\rightarrow x$ as $k\rightarrow \infty$. 
 So, since  $\pi_p([D_\p^{\beta_i}(A_F^\p)]_\p)$ is closed and $$\pi_\p([D_\p^{\beta_{i+1}}(A_F^\p)]_\p)\subseteq \pi_\p([D_\p^{\beta_{i}}(A_F^\p)]_\p)$$ for all $i\in \omega$, we get that $x\in \pi_\p([D_\p^{\beta_i}(A_F^\p)]_\p)$ for all $i\in \omega$. Thus, by Proposition \ref{independ}, we obtain $x\in \pi_\p([D_\p^\lambda(A_F^\p)]_\p)$, which contradicts that $[D_\p^\lambda(A_F^\p)]_\p=\emptyset$.
\end{proof}

\begin{prop}\label{successor} Let $X$ be compact, $\p$ be a presentation of $X$ and assume that $[D_\p^\beta(A_F^\p)]_\p=\emptyset$ for some $\beta<\omega_1$. Then there is some $k\in \omega$ such that $D_\p^{\beta +k}(A_F^\p)=\emptyset$. 
\end{prop}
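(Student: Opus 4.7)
The plan is to argue by contradiction. Suppose $[D_\p^\beta(A_F^\p)]_\p = \emptyset$ but $D_\p^{\beta+k}(A_F^\p) \neq \emptyset$ for every $k \in \omega$. I will construct a sequence $(m_k)_k$ inside $D_\p^\beta(A_F^\p)$ whose second coordinates $m_k(1)$ tend to infinity. Since $D_\p^\beta(A_F^\p)$ is $\p$-closed (as observed in the proof of Theorem \ref{RankDef}, using that $\prec_\p$ is transitive and that $A_F^\p$ is itself $\p$-closed), Lemma \ref{Konig} together with compactness of $X$ then forces $[D_\p^\beta(A_F^\p)]_\p \neq \emptyset$, contradicting the hypothesis.

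The heart of the argument is the construction of the $m_k$. The key observation is that $\prec_\p$ strictly increases the second coordinate by definition, so the derivative forces a strict increase along chosen successors: if $n \in D_\p^{\gamma+1}(A)$ then there exist $u,v \in D_\p^{\gamma}(A)$ with $n \prec_\p u, v$, and in particular $u(1), v(1) > n(1)$. Given any $n_0 \in D_\p^{\beta+k}(A_F^\p)$, I iterate this $k$ times, picking a single successor at each stage, to obtain a chain
\[ n_0 \prec_\p n_1 \prec_\p \cdots \prec_\p n_k \]
with $n_j \in D_\p^{\beta+k-j}(A_F^\p)$ for $0 \leq j \leq k$. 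Setting $m_k := n_k$ yields a member of $D_\p^\beta(A_F^\p)$ with $m_k(1) \geq n_0(1) + k \geq k$.

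With $(m_k)_k \subseteq D_\p^\beta(A_F^\p)$ and $m_k(1) \to \infty$, Lemma \ref{Konig} directly produces a $\p$-compatible branch in $D_\p^\beta(A_F^\p)$, delivering the desired contradiction. I do not anticipate any genuine obstacle: once one notices that $\prec_\p$ is built so as to strictly raise the $(1)$-coordinate, everything reduces to the compactness-based König-type lemma already established in Lemma \ref{Konig}.
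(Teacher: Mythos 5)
Your proposal is correct and follows essentially the same route as the paper: assume $D_\p^{\beta+k}(A_F^\p)\neq\emptyset$ for all $k$, extract elements of $D_\p^\beta(A_F^\p)$ with unbounded second coordinate, and invoke Lemma \ref{Konig} to contradict $[D_\p^\beta(A_F^\p)]_\p=\emptyset$. The only difference is that you spell out the chain construction justifying the existence of $m_k\in D_\p^\beta(A_F^\p)$ with $m_k(1)\geq k$, which the paper leaves implicit.
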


\begin{proof} Suppose for a  contradiction that this is not the case. Then for each $k\in\omega$ there is $n_k\in D_\p^\beta(A_F^\p)$ with $n_k(1)\geq k$.  Thus, by Lemma \ref{Konig}, we obtain $[D_\p^\beta(A_F^\p)]_\p\neq \emptyset$, a contradiction.
\end{proof}
Putting together Proposition \ref{bodysucc} and Proposition \ref{successor} we obtain the following result.

\begin{cor} Let $\p$ be a presentation of a compact Polish space $X$. Then for all $F\in F_{\aleph_0}(X)$ there is $\beta<\omega_1$ such that $\phi_\p(F)=\beta + 1$. 
\end{cor}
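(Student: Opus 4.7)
The plan is to combine the two immediately preceding propositions. For non-empty $F \in F_{\aleph_0}(X)$, set $\mu = \phi_\p(F) = |A_F^\p|_\p$; since $F$ is countable, $\mu$ is by definition the least ordinal with $D_\p^\mu(A_F^\p) = \emptyset$, and the goal is to exhibit $\mu$ as a successor. (Note that the case $F = \emptyset$ gives $\mu = 0$, so the statement tacitly excludes it or needs to be read charitably.)

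First I would introduce the auxiliary ordinal $\alpha$, defined as the least $\delta < \omega_1$ with $[D_\p^\delta(A_F^\p)]_\p = \emptyset$. Since emptiness of a derivative trivially forces emptiness of its body via $\pi_\p$, we have $\alpha \leq \mu$. Proposition \ref{bodysucc}, which applies precisely because $F$ is non-empty, then tells us that $\alpha$ is a successor ordinal; write $\alpha = \gamma + 1$.

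Next I would apply Proposition \ref{successor} with $\beta = \alpha$ to obtain a least $k \in \omega$ such that $D_\p^{\alpha + k}(A_F^\p) = \emptyset$. It remains to identify $\mu$ with $\alpha + k$: for any $\delta < \alpha$ the body $[D_\p^\delta(A_F^\p)]_\p$ is non-empty, so the derivative itself is non-empty; and for $\delta \in [\alpha, \alpha + k)$ the derivative is still non-empty by the minimality of $k$. Hence $\mu = \alpha + k = \gamma + (1 + k)$, which is a successor since $1 + k$ is a positive natural number, completing the proof.

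The only real subtlety is the interplay between two distinct minima — the first time the body becomes empty (controlled by Proposition \ref{bodysucc}) and the number of further finite derivative steps required (controlled by Proposition \ref{successor}) — but once $\alpha$ is pinned down as a successor and $k$ as the least natural number closing the gap, the sum is manifestly a successor ordinal.
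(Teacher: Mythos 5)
Your proof is correct and follows the paper's intended route: the paper simply states that the corollary is obtained by "putting together" Propositions \ref{bodysucc} and \ref{successor}, and your argument supplies exactly the bookkeeping that combination requires (identifying $\phi_\p(F)$ with $\alpha+k$ where $\alpha=\gamma+1$ is the least stage at which the body empties and $k$ is the least number of further steps, so that $\phi_\p(F)=\gamma+(1+k)$ is a successor). Your remark that $F=\emptyset$ must be tacitly excluded is also a fair reading of the statement.
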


The next lemma is the main ingredient in the proof of $(1)\implies (3)$ in Theorem \ref{A}.


\begin{lem}\label{locallyupperbound} Let $X$ be a Polish space and $\p$ a presentation of $X$. If we have $F\in F_{\aleph_0}(X)$,  $\alpha <\omega_1$ and $n\in D_\p^{\omega\alpha}(A_F^\p)$ such that $\ol{\bap(n)\cap F}$ is compact, then there is $x\in \ol{\bap(n)\cap F}$ such that $x\in F^\alpha$.
\end{lem}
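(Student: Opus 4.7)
The plan is to argue by transfinite induction on $\alpha$. For $\alpha = 0$, since $F$ is closed and $n \in A_F^\p$ forces $\bap(n) \cap F \neq \emptyset$, any point of $\bap(n) \cap F$ lies in $\ol{\bap(n) \cap F}$ and in $F = F^0$, so the statement is immediate.

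For the successor step $\alpha = \beta + 1$, the key observation is that $\omega(\beta+1) = \omega\beta + \omega$ is a limit ordinal, so $n \in D_\p^{\omega\beta + k}(A_F^\p)$ for every $k \in \omega$. Unfolding the derivative $k$ times would produce a binary tree of depth $k$ rooted at $n$: each internal node $p$ at depth $j < k$ sits in $D_\p^{\omega\beta + k - j}(A_F^\p) = D_\p(D_\p^{\omega\beta + k - j - 1}(A_F^\p))$, hence admits children $q_0, q_1 \in D_\p^{\omega\beta + k - j - 1}(A_F^\p)$ with $p \prec_\p q_0, q_1$ and $q_0 \cw_\p q_1$. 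The $2^k$ leaves $m_1, \ldots, m_{2^k} \in D_\p^{\omega\beta}(A_F^\p)$ of this tree satisfy $n \prec_\p m_i$ by transitivity of $\prec_\p$ (using $\cbap(\cdot) \subseteq \bap(\cdot) \subseteq \cbap(\cdot)$ along the chain), and any two leaves are $\cw_\p$-incompatible because they descend from the disjoint $\cbap$-balls of their branching node. Since $\ol{\bap(m_i) \cap F}$ is a closed subset of the compact set $\ol{\bap(n) \cap F}$, the induction hypothesis applied to each $m_i$ yields $z_i \in \ol{\bap(m_i) \cap F}$ with $z_i \in F^\beta$; these $z_i$ are pairwise distinct as the $\cbap(m_i)$ are disjoint. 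Letting $k$ vary shows that $F^\beta \cap \ol{\bap(n) \cap F}$ is infinite, and compactness of $\ol{\bap(n) \cap F}$ then provides a limit point $x \in \ol{\bap(n) \cap F}$ of this set. Since $F^\beta$ is closed and $x$ is a limit of distinct elements of $F^\beta$, we conclude $x \in F^{\beta+1}$.

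For the limit step $\alpha = \lambda$, I would fix an increasing sequence $(\beta_k)_k$ cofinal in $\lambda$. As $\omega\lambda$ is a limit ordinal with $\omega\beta_k < \omega\lambda$, we have $n \in D_\p^{\omega\beta_k}(A_F^\p)$, so the induction hypothesis supplies $x_k \in \ol{\bap(n) \cap F}$ with $x_k \in F^{\beta_k}$. By compactness of $\ol{\bap(n) \cap F}$, a subsequence converges to some $x \in \ol{\bap(n) \cap F}$, and for each fixed $\beta < \lambda$ the tail of this subsequence eventually lies in the closed set $F^\beta$, giving $x \in F^\beta$ for all $\beta < \lambda$ and hence $x \in F^\lambda$.

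The main obstacle lies in the successor case, where one must simultaneously arrange, for the $2^k$ leaves of the unfolded tree, descent from $n$ via $\prec_\p$, pairwise $\cw_\p$-incompatibility, and membership in $D_\p^{\omega\beta}(A_F^\p)$. These three properties are precisely what is needed for compactness of $\ol{\bap(n) \cap F}$ to upgrade an infinite family of $F^\beta$-points into a genuine limit point witnessing $F^{\beta+1}$-membership, and thereby to propagate the inductive hypothesis.
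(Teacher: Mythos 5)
Your proof is correct and follows essentially the same route as the paper: trivial base case, unfolding the derivative $k$ times into a binary tree of $2^k$ leaves with pairwise disjoint closed balls to produce arbitrarily many distinct $F^\beta$-points in the compact set $\ol{\bap(n)\cap F}$, then extracting a limit point for the successor step, and a diagonal compactness argument at limits. The only (harmless) difference is cosmetic: the paper recursively builds one new point per stage by choosing a leaf whose ball avoids the previously selected points (using $k<2^k$), whereas you directly note that each stage already yields $2^k$ distinct points, so the set is infinite.
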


\begin{proof}
The statement is trivial for $\alpha=0$. 

Assume that the statement is true for some $\alpha<\omega_1$ and that $n\in D_\p^{\omega(\alpha +1)}(A_F^\p)$. For each $k\in \omega$ there must be $m_k\in D^{\omega \alpha +k}_\p( A_F^\p)$ such that $n\pc_\p m_k$. 
We will now recursively construct a sequence $(x_k)_k\in F\cap U_n$ such that $x_k\in F^\alpha$ for all $k\in \omega$ and $x_i\neq x_j$ whenever $i\neq j$.  First, since $m_0\in D^{ \omega\alpha}(A_F)$, it follows by the induction hypothesis that there is 
\[x_0\in \ol{\bap(m_0)\cap F}\subseteq \bap(n)\cap F\]
with $x_0\in F^\alpha$. Now assume we have constructed $x_0,\ldots , x_{k-1}$ for some $k>0$ satisfying the above. Then since $m_k\in D^{ \omega \alpha +k}(A_F)$ there is $(l_s)_{s\in 2^{\leq k}}\in D^{\omega\alpha }_\p(A_F^\p)$ such that
$l_\emptyset=m_k$, $l_s\pc_\p l_{s^\smallfrown 0},l_{s^\smallfrown 1}$ and $l_{s^\smallfrown 0}\cw_\p l_{s^\smallfrown 1}$ for all $s\in 2^{<k}$.
Now, since $k<2^k$, there must be $s\in 2^k$ such that $x_i\notin \cbap(l_s)$ for all $i<k$. Moreover, as $l_s\in D_\p^{\omega\alpha}(A_F^\p)$, it follows by the induction hypothesis that there is $$x_k\in \ol{\bap(l_s)\cap F}\subseteq \bap(n)\cap F$$ with $x_k\in F^\alpha$. By the choice of $l_s$, we obtain $x_k\neq x_i$ for all $i<k$. Continuing this way we obtain $(x_k)_k\in\bap(n)\cap F$ satisfying the above. By compactness of $X$, it follows that there is $x\in \ol{\bap(n)\cap F}$ with $x\in F^{\alpha+1}$. 

To finish the proof, let $\lambda<\omega_1$ be a limit ordinal and assume that the statement is true for all $\beta <\lambda$. Moreover, let $n\in D_\p^{\omega \lambda}( A_F^p)$. Fix $(\beta_i)_i<\lambda$ with $\beta_{i}\leq \beta_{i+1}$ for all $i\in \omega$ and $\bigcup_{i\in \omega}\beta_i=\lambda$.
Then,  by the induction hypothesis, we may for each $i\in \omega$ choose $x_i \in \ol{\bap(n)\cap F}$ with $x_i\in F^{\beta_i}$. Since $X$ is compact, there is $x\in \ol{\bap(n)\cap F}$ and a subsequence $(x_{i_k})_k\subseteq (x_i)_i$ such that $x_{k_i}\rightarrow x$ as $k\rightarrow \infty$. Then we must have $x\in F^{\beta_{i_k}}$ for all $k\in \omega$ and hence $x\in F^{\lambda}$. 
\end{proof}

We are now ready to conclude the proof of $(1)\implies (3)$ in Theorem \ref{A}.

\begin{thm}\label{upperbound} Let $X$ be a compact Polish space and $\p$ any presentation of $X$. Then $\phi_\p(F) < \omega|F|_{\cb}$ for all $F\in F_{\aleph_0}(X)$. 
\end{thm}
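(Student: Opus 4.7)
The plan is to package together two ingredients already in place: Lemma \ref{locallyupperbound} (which gives a ``local'' upper bound on the derivative ranks in terms of Cantor–Bendixson rank, provided the relevant neighbourhood closures are compact) and the corollary to Proposition \ref{successor} (which says $\phi_\p(F)$ is always a successor ordinal when $X$ is compact). The first ingredient yields $\phi_\p(F) \le \omega |F|_{\cb}$; the second ingredient refines $\le$ to $<$ because $\omega |F|_{\cb}$ is a limit ordinal.

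Set $\alpha = |F|_{\cb}$ and first suppose $F \ne \emptyset$, so $\alpha \ge 1$ and $F^\alpha = \emptyset$. Because $X$ itself is compact, $\ol{\bap(n) \cap F}$ is compact for every $n \in \omega^2$. So Lemma \ref{locallyupperbound} applies to every $n \in D_\p^{\omega\alpha}(A_F^\p)$: if such an $n$ existed, the lemma would produce some $x \in \ol{\bap(n)\cap F}$ with $x \in F^\alpha = \emptyset$, a contradiction. Hence
\[
D_\p^{\omega\alpha}(A_F^\p) = \emptyset,
\]
which means $\phi_\p(F) \le \omega\alpha = \omega|F|_{\cb}$.

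To get strict inequality, I invoke the corollary immediately following Proposition \ref{successor}: since $X$ is compact, $\phi_\p(F) = \beta+1$ for some $\beta<\omega_1$. As $\alpha \ge 1$, the ordinal $\omega\alpha$ is a limit ordinal, so it cannot equal the successor ordinal $\beta+1$. Combined with $\phi_\p(F) \le \omega\alpha$, this forces $\phi_\p(F) < \omega|F|_{\cb}$, as required. (The degenerate case $F = \emptyset$ gives $\phi_\p(F) = 0 = \omega|F|_{\cb}$ and is excluded by the strict-inequality statement.)

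Nothing here is genuinely hard: the delicate combinatorics inside the compact balls was absorbed into Lemma \ref{locallyupperbound}, and global compactness is precisely what is needed to remove the ``locally compact'' hypothesis of that lemma uniformly. The only subtle point is remembering that the successor property of $\phi_\p(F)$ is doing real work in the last step; without it, one would only recover the non-strict bound $\phi_\p(F) \le \omega|F|_{\cb}$.
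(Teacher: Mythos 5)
Your proof is correct and uses exactly the same two ingredients as the paper (Lemma \ref{locallyupperbound} applied with all closures $\ol{\bap(n)\cap F}$ compact, plus the successor corollary to Proposition \ref{successor}), merely arranged in the contrapositive: you set $\alpha=|F|_{\cb}$ to get $\phi_\p(F)\le\omega\alpha$ and then use the successor property for strictness, whereas the paper picks $\alpha$ with $\omega\alpha<\phi_\p(F)<\omega(\alpha+1)$ and deduces $|F|_{\cb}\ge\alpha+1$. This is essentially the same argument, and your explicit handling of the degenerate case $F=\emptyset$ is if anything slightly more careful than the paper's.
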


\begin{proof} Let $F\in F_{\aleph_0}(X)$.
First, as $\phi_\p(X)$ a successor, we may fix $\alpha <\omega_1$ such that $ \omega\alpha <\phi_\p(F) < \omega(\alpha + 1)$. Hence there exists $n\in D_\p^{\omega \alpha}( A_F^\p)$. So, by Lemma \ref{locallyupperbound}, there is some $x\in F^\alpha$. Therefore $|F|_{\cb}\geq \alpha +1$ and hence $\phi_\p(F)<  \omega|F|_{\cb}$.
\end{proof}

 Now we turn to the proof of $(2)\implies (1)$ in Theorem \ref{A}. Our strategy is to fix a non-compact Polish space and a discrete closed infinite subset. Then given $\alpha<\omega_1$ we will construct a presentation of the space, such that the associated rank of the infinite discrete subset is larger than $\alpha$. To construct this presentation we will need the following extension lemma for complete metrics, which is a ``complete'' version of Theorem 5 in \cite{Bing}. Our proof simply shows that the construction in \cite{Bing} can provide a complete metric under the assumptions below. 

\begin{lem}\label{extendmetric} Let $X$ be a completely metrizable space, $K\subseteq X$ a closed subset and $d_k$ a complete metric on $K$ with ${\rm diam}_{d_K}(K)<\infty$. Then there exists a complete metric $d$ on $X$ such that $d_{|K}=d_K$. 
\end{lem}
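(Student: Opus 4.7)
The plan is to adapt the extension construction of \cite[Theorem 5]{Bing} and then verify that the extra hypotheses here (namely $d_K$ complete and $\diam_{d_K}(K)<\infty$, together with $X$ completely metrizable) force the extended metric to be complete on all of $X$. First I would fix any complete compatible metric $\rho$ on $X$, which exists because $X$ is completely metrizable; by passing to $\min(\rho,1)$ I may assume $\diam_\rho(X)\leq 1$, and by rescaling $d_K$ I may also assume $\diam_{d_K}(K)\leq 1$.

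Next I would carry out Bing's construction. The set $U=X\setminus K$ is open in $X$, hence paracompact in its subspace topology, so I may choose a locally finite open cover $(V_\alpha)_{\alpha\in A}$ of $U$ with $\diam_\rho(V_\alpha)\leq \tfrac{1}{3}\rho(V_\alpha,K)$, a subordinate partition of unity $(\phi_\alpha)_{\alpha\in A}$, and for each $\alpha$ a base point $k_\alpha\in K$ with $\rho(V_\alpha,k_\alpha)\leq 2\rho(V_\alpha,K)$. Following \cite{Bing} I would then stitch $\rho$ on $U$ and $d_K$ on $K$ together via the weights $\phi_\alpha(x),\phi_\alpha(y)$ and base points $k_\alpha$ to obtain a metric $d$ on $X$ that extends $d_K$, is compatible with the topology of $X$, and can be arranged to satisfy $d\geq \rho$. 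Producing such a $d$ is done in \cite{Bing} without assuming completeness of $d_K$; the point of the present lemma is that with completeness and boundedness, the same $d$ is complete.

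Finally I would verify completeness. Let $(x_n)$ be a $d$-Cauchy sequence. Since $d\geq \rho$, the sequence is $\rho$-Cauchy, hence converges in $\rho$ to some $x^\ast\in X$. If $x^\ast\in U$, then in a $\rho$-neighborhood of $x^\ast$ only finitely many $\phi_\alpha$ are nonzero and $d$ is uniformly equivalent to $\rho$, so $x_n\to x^\ast$ in $d$. If $x^\ast\in K$, then $\rho(x_n,x^\ast)\to 0$ forces every active base point $k_\alpha$ (with $\phi_\alpha(x_n)>0$ for infinitely many $n$) to lie $\rho$-close to $x^\ast$; combined with the $d$-Cauchy property and the fact that $d$-distances between points of $U$ control $d_K$-distances among the associated base points, one deduces that these base points form a $d_K$-Cauchy set, and by completeness of $d_K$ they must converge in $d_K$ to a unique point of $K$, which by topological compatibility coincides with $x^\ast$. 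This yields $d(x_n,x^\ast)\to 0$. The main obstacle will be this last step: transferring $\rho$-nearness to $K$ into $d_K$-nearness on $K$ quantitatively, which is precisely where both the completeness of $d_K$ and the boundedness of $\diam_{d_K}(K)$ (keeping all weighted $d_K$-terms in $d$ finite and uniformly bounded) are used.
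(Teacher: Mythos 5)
There is a genuine gap, and it sits exactly where your whole completeness argument is anchored: the claim that Bing's stitched metric ``can be arranged to satisfy $d\geq \rho$.'' This is incompatible with the requirement $d_{|K}=d_K$. Since $d$ restricts to $d_K$ on $K$, having $d\geq\rho$ would force $d_K(x,y)\geq\rho(x,y)$ for all $x,y\in K$, i.e.\ the identity $(K,d_K)\to (K,\rho)$ would be $1$-Lipschitz. Two compatible complete metrics on $K$ need not satisfy any such uniform comparison, and no rescaling of $\rho$ fixes this: take $X=\R$, $K=\{0\}\cup\{1/n\mid n\geq 1\}$, $\rho$ Euclidean, and $d_K$ the pullback of the Euclidean metric under $t\mapsto t^2$; then $\rho(1/n,1/(n+1))/d_K(1/n,1/(n+1))\to\infty$. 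Without $d\geq\rho$ you cannot deduce that a $d$-Cauchy sequence is $\rho$-Cauchy, so you have no candidate limit $x^\ast$ and the entire verification collapses at its first step. The same non-uniformity also undermines your final step (which you yourself flag as the main obstacle): $\rho$-closeness of the active base points $k_\alpha$ to a point $x^\ast\in K$ gives no $d_K$-control whatsoever, because $(K,\rho)\to(K,d_K)$ is continuous but not uniformly continuous; the $d_K$-Cauchyness of the waypoints has to be extracted directly from the $d$-Cauchy hypothesis, not routed through $\rho$-convergence.

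For comparison, the paper avoids both problems by not asking for a global inequality between $d$ and a reference metric. It first produces (from the first part of Bing's argument, using $\diam_{d_K}(K)<\infty$) a \emph{complete} metric $d'$ on $X$ with $d'\geq d_K$ on $K$, and then sets
\[d(x,y)=\min\Bigl\{ d'(x,y),\ \inf\set{d'(x,z)+d_K(z,u)+d'(u,y)\mid u,z\in K}\Bigr\}.\]
Completeness is then a clean dichotomy: a $d$-Cauchy sequence is either $d'$-Cauchy (done, $d'$ is complete), or infinitely often realizes its small $d$-distances through the shortcut branch, which forces a $d_K$-Cauchy sequence of waypoints in $K$; completeness of $d_K$ supplies the limit, which is also the $d$-limit of the original sequence. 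If you want to keep the partition-of-unity construction, you would need to reproduce this dichotomy inside Bing's formula (eventually-away-from-$K$ versus base-points-are-$d_K$-Cauchy) rather than appeal to $d\geq\rho$.
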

\begin{proof} First, by following the first part of the proof of \cite[Theorem 5]{Bing}, there is a complete metric $d'$ on $X$ satisfying $d'(x,y)\geq d_K(x,y)$ for all $x,y\in K$. 
Now, as in \cite[Theorem 5]{Bing}, define the metric $d$ on $X$ by
\[d(x,y)=\min\left\lbrace d'(x,y), \inf\set{ d'(x,z)+d_K(z,u)+d'(u,y)\mid u,z\in K}\right\rbrace.\]
It is easily seen that $d(x,y)=d_K(x,y)$ for all $x,y\in K$. So it suffices to prove that $d$ is complete. Let $(x_i)_i\in X$ be a Cauchy sequence with respect to $d$. We shall prove that there is $x\in X$ such that $x_i\rightarrow x$ as $i\rightarrow \infty$. Note that if $(x_i)_i$ is also Cauchy with respect to $d'$, then we would be done, since $d'$ is complete. So, assume that this is not the case. Then there is $\epsilon>0$ such that for each $N\in \omega$ there exist $i_N,j_N\geq N$ such that $d'(x_{i_N},x_{j_N})\geq \epsilon$. Next, for each $l\in \omega$ let $M_l\in \omega$ satisfy that 
$d(x_i,x_j)<2^{-l}\epsilon$ for all $i,j\geq M_l$. Moreover, assume that $M_{l+1}\geq M_l$ for all $l\in \omega$. Then, as $d'(x_{i_{M_l}},x_{j_{M_l}})\geq \epsilon$ for all $l\in \omega$, we must have that there is $u_l,z_l\in K$ such that
\[d'(x_{i_{M_l}}, u_l)+d_K(u_l,z_l)+d'(z_l,x_{j_{M_l}})<2^{-l}\epsilon\]
for all $l\in \omega$. We claim that $(u_l)_l\in K$ is a Cauchy sequence with respect to $d_K$. Indeed, we have
\begin{align*}
d_K(u_l,u_{l+1})&\leq d(u_l,x_{i_{M_l}})+d(x_{i_{M_l}},x_{i_{M_{l+1}}
})+d(x_{i_{M_{l+1}}}, u_{l+1})\\
&< d'(u_l,x_{i_{M_l}})+2^{-l}\epsilon
+d'(x_{i_{M_{l+1}}}, u_{l+1})\\
&<3\epsilon 2^{-l}.
\end{align*}
Thus, since $d_K$ is complete, there exists $u\in K$ such that $u_l\rightarrow u$ as $l\rightarrow \infty$. Finally, since we have $d(x_{i_{M_l}},u_l)<2^{-l}\epsilon$ for all $l\in \omega$, we also have $x_i\rightarrow u$ as $i\rightarrow \infty$.
\end{proof}

\begin{thm}Let $X$ be a non-compact Polish space and $F\in F_{\aleph_0}(X)$ an infinite discrete subset. For each $\alpha<\omega_1$ there is a presentation $\p_\alpha$ of $X$ such that $\phi_{\p_\alpha}(F)\geq\alpha$. 
\end{thm}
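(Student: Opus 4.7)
My plan is to combine Corollary~\ref{discretearbit} with Lemma~\ref{extendmetric}. Fix $\alpha<\omega_1$. By Corollary~\ref{discretearbit} we obtain a presentation $\s=(\omega,d',\omega)$ of the discrete Polish space $\omega$ with $\diam_{d'}(\omega)<\infty$ and $\phi_\s(\omega)\geq\alpha$; the proof of that corollary actually produces $d'$ as the pullback of the standard ultra-metric on $\omega^\omega$, so $d'$ itself is an ultra-metric. Since $F$ is countably infinite and discrete, fix a bijection $g\colon\omega\to F$ and transport $d'$ to the complete ultra-metric $d_F:=g_\ast d'$ on $F$ of finite diameter. Lemma~\ref{extendmetric} then extends $d_F$ to a complete compatible metric $d$ on $X$. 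Pick a countable dense sequence $\bar x=(x_n)_n$ in $X$ with $x_{2i}=g(i)$ for each $i\in\omega$, and set $\p_\alpha=(X,d,\bar x)$.

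To conclude $\phi_{\p_\alpha}(F)\geq\alpha$, I would apply Lemma~\ref{homoless} via $\psi\colon\omega^2\to\omega^2$, $\psi(n)=(2n(0),n(1))$. Since $d|_F=d_F$ and $f_{n(0)}\in B_d(x_{2n(0)},2^{-n(1)-1})$, we have $\psi(\omega^2)\subseteq A_F^{\p_\alpha}$. A routine transfinite induction yields $A\subseteq B\Rightarrow D_{\p_\alpha}^\beta(A)\subseteq D_{\p_\alpha}^\beta(B)$, and hence $|A|_{\p_\alpha}\leq|B|_{\p_\alpha}$ whenever $B\in\Omega_{\p_\alpha}$. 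Combined with Lemma~\ref{homoless}, this would give
\[\phi_{\p_\alpha}(F)=|A_F^{\p_\alpha}|_{\p_\alpha}\geq|\psi(\omega^2)|_{\p_\alpha}\geq|\omega^2|_\s=\phi_\s(\omega)\geq\alpha.\]

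The substantive step is verifying the hypotheses of Lemma~\ref{homoless}, i.e.\ that $\prec_\s$ and $\cw_\s$ transport along $\psi$ to $\prec_{\p_\alpha}$ and $\cw_{\p_\alpha}$. A $d$-ball in $X$ centered at a point of $F$ strictly contains its trace on $F$, so containment and disjointness do not pass from $F$ to $X$ automatically. My plan is to exploit the discrete ultra-metric structure: $n\prec_\s m$ forces $d_F(f_{n(0)},f_{m(0)})\leq 2^{-n(1)-2}$, while the closed $d$-ball has radius $2^{-m(1)-1}$, so the triangle inequality in $X$ delivers the desired strict containment with substantial slack whenever the level gap is sufficient, and analogously a full dyadic jump of $d_F$ past $\max(2^{-n(1)-1},2^{-m(1)-1})$ forced by $\cw_\s$ yields strict disjointness. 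The main obstacle is the borderline configurations ($m(1)=n(1)+1$ for $\prec$, and equal levels for $\cw$) where the slack degenerates. I would absorb these by restricting the domain of $\psi$ to the cofinal sub-structure of $\omega^2$ whose indices skip at least one dyadic level, then selecting the input presentation from Corollary~\ref{discretearbit} to have rank large enough that this restriction still witnesses rank $\geq\alpha$; alternatively one can slightly rescale $d_F$ before extending so that $d_F$-distances strictly miss the dyadic thresholds, removing the borderline altogether.
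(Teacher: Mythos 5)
Your setup is exactly the paper's: both proofs take the presentation of the countable discrete space from Corollary~\ref{discretearbit} (with an ultra-metric of finite diameter), transport it to $F$, extend the metric to $X$ by Lemma~\ref{extendmetric}, and interleave $F$ into the dense sequence on the even indices. The skeleton of your comparison argument (Lemma~\ref{homoless} plus monotonicity of $A\mapsto|A|_{\p}$ on $\Omega_\p$) is also sound. The gap is precisely at the step you flag as ``substantive'': the level-preserving map $\psi(n)=(2n(0),n(1))$ does \emph{not} transport $\prec_\s$ and $\cw_\s$ to $\prec_{\p_\alpha}$ and $\cw_{\p_\alpha}$, and the two proposed patches do not repair this. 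The problem is that the two relations need corrections in \emph{opposite} directions: to recover $\prec$ in $X$ you must enlarge the outer image ball (the only information $\prec_\s$ gives is $d_F(y_{n(0)},y_{m(0)})<2^{-n(1)-1}$, which even for dyadic-valued $d_F$ only yields $d(x,y_{n(0)})\leq 2^{-m(1)-1}+2^{-n(1)-2}$, with equality possible when $m(1)=n(1)+1$), whereas to recover $\cw$ you must shrink the image balls (at equal levels $\cw_\s$ only gives $d_F\geq 2^{-n(1)}=2^{-n(1)-1}+2^{-m(1)-1}$, and the extension $d$ of Lemma~\ref{extendmetric} can create a midpoint realizing equality). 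Rescaling $d_F$ cannot create the missing gap for $\prec$, because the required slack is a fixed \emph{fraction} of the radius, not an additive perturbation of the threshold; and restricting to a level-skipping cofinal substructure fixes $\prec$ but not equal-level $\cw$, and moreover you give no argument that the restricted structure retains rank comparable to $|\omega^2|_\s$ --- witnesses for the derivative cannot simply be pushed to higher levels inside $D_\s^\beta$, so this is not automatic.

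The paper's proof supplies exactly the missing quantitative content. It uses the shifted map $\tilde n=(2n(0),n(1)-1)$ (enlarging balls, which makes $\prec$ transfer cleanly via $(l,k+1)\prec_\s(i,j)\Rightarrow(2l,k)\prec_\p(2i,j)$), accepts that $\cw$ only transfers with the opposite shift $(l,k)\cw_\s(i,j)\Rightarrow(2l,k+1)\cw_\p(2i,j+1)$, and reconciles the two-level discrepancy by a pigeonhole argument on a depth-$3$ binary $\prec_\s/\cw_\s$-tree: among the eight leaves of a tree witnessing $n\in D_\s^{3\beta+3}$, the ultra-metric forces some pair to be separated far enough that the enlarged images are still $\cw_\p$-related. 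This yields $\phi_\s(F)\leq 3\phi_\p(F)+1$, which is why the input presentation is chosen with $\phi_\s(F)>3\alpha$ rather than $\geq\alpha$. Some argument of this kind --- trading a bounded number of derivative steps in $\s$ for one step in $\p_\alpha$ --- is the actual content of the theorem, and it is absent from your write-up.
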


\begin{proof}
Let $\alpha<\omega_1$ be given and fix an enumeration $F=\set{y_i\mid i \in \omega}$. Moreover, by Corollary \ref{discretearbit}, we can fix an ultra-metric $d_F$ on $F$ that induces the discrete topology and such that the presentation $\s=(F,d_F,F)$ satisfies $\phi_\s(F)>3\alpha$ and $\diam_{d_F}(F)<\infty$. Using the presentation $\s$ of $F$, we will construct a presentation $\p$ of $X$ that satisfies $\phi_\p(F)\geq \alpha$. Let $\ol{x}$ be a countable dense sequence such that $x_{2k}=y_k$ for all $k\in \omega$. Applying Lemma \ref{extendmetric}, let $d$ be a complete metric on $X$ that extends $d_F$. Then put $\p=(X,d,\ol{x})$.  It now suffices to prove that $\phi_\s(F) \leq 3\phi_\p(F)+1$. 

First, note that for all $l,k,i,j\in \omega$ the triangle inequality implies that
\[ \bas(l,k+1) \pc_\s \bas(i,j) \implies \bap(2l,k)\pc_\p \bap(2i,j) \]
and
\[ \cbas(l,k)\cw_\s \cbas(i,j) \implies \cbap(2l,k+1)\cw_\p \cbap(2i,j+1).  \]
In the following, for any $n\in \omega^2$ with $n(1)>0$ we will let $\tilde{n}=(2n(0),n(1)-1)$.  We will prove by induction  on $\beta <\omega_1$ that if $n\in D^{3\beta}_\s(A_F^\s)$ and $n(1)>0$, then $\tilde{n}\in D^\beta_\p(A_F^\p)$. This is clearly true for $\beta=0$ and, by the induction hypothesis, when $\beta$ is a limit ordinal. We will therefore concentrate on the successor case. So assume that the statement holds for some $\beta<\omega_1$ and that $n\in D^{3\beta +3}_\s(A_F^\s)$ with $n(1)>0$. Then there exist $(n_s)_{s\in 2^{\leq 3}}\in D^{3\beta}_\s(A_F^\s)$ satisfying  $n=n_\emptyset$, $n_s\pc_\s n_{s^\smallfrown 0},n_{s^\smallfrown 1}$ and $ n_{s^\smallfrown 0}\cw_\s n_{s^\smallfrown 1}$ for all $s\in 2^{<3}$. Since $d_F$ is an ultra-metric, it follows from the above implications that there are   $s,t\in 2^3$ such that 
$\tilde{n}_s\cw_\p \tilde{n}_t$ and $\tilde{n}\pc_\p \tilde{n}_s,\tilde{n}_t$. Therefore by the induction hypothesis, we must have $\tilde{n}\in D^{\beta+1}_\p(A_F^\p)$, as wanted.

\end{proof}

\subsection{A characterization of $\sigma$-compact spaces}
In this subsection we prove a characterization of $\sigma$-compactness. To state the theorem, we will first recall another well-known rank that one can associate to Polish spaces, which measures how far from compact a $\sigma$-compact Polish space is.

For a Polish space $X$ with basis $(U_i)_i$ we let
\[X^*=\set{x\in X\mid (\forall i\in \omega) x\in U_i\implies \ol{U_i}\notin K(X) }.\]
Note that $X^*$ is closed, so we may recursively define the iterated derivatives $X_\alpha $ of $X$ for all $\alpha<\omega_1$ as follows:
\[ X_0=X, \qquad X_{\alpha+1}=( X_\alpha)^*\qquad \text{and} \qquad  X_\lambda =\bigcap_{\beta<\lambda} { X_\beta},\]
where $\alpha,\lambda<\omega_1$ and $\lambda$ a limit ordinal. There is a least ordinal $\alpha<\omega_1$ for which $X_\alpha=X_{\alpha+1}$. We call this ordinal the $K_\sigma$-rank of $X$ and denote it by $|X|_{K_\sigma}$. By construction it is clear that $X_{|X|_{K_\sigma}}=\emptyset$ if and only if $X$ is $K_\sigma$. Moreover, for each $\alpha < \omega_1$ we have that $X_\alpha\setminus X_{\alpha +1}$ is open and locally compact in $X_\alpha$. 
\\

We are now ready to state our characterization.
\begin{thm} Let $X$ be a Polish space. The following are equivalent:
\begin{itemize}
\item[1)] $X$ is $\sigma$-compact.
\item[2)] For some presentation $\p$ of $X$, there exists $f\colon \omega_1\rightarrow \omega_1$ such that $\phi_\p(F)\leq f(|F|_{\cb})$ for all $F\in F_{\aleph_0}(X)$.
\item[3)] For each presentation $\p$ of $X$, there exist ordinals $\alpha_\p,\beta_\p <\omega_1$ such that 
\[\phi_\p(F)\leq (\omega|F|_{\cb} + \alpha_\p)|X|_{K_\sigma} + \beta_\p,\]
for all $F\in F_{\aleph_0}(X)$. 
\end{itemize}
\end{thm}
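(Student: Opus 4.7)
The implication $(3)\Rightarrow(2)$ is immediate: fix any presentation $\p$ and take $f(\mu)=(\omega\mu+\alpha_\p)|X|_{K_\sigma}+\beta_\p$, which is a well-defined map $\omega_1\to\omega_1$ since all parameters are countable ordinals.

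For $(2)\Rightarrow(1)$ I would argue by contraposition. Suppose $X$ is not $\sigma$-compact; then $Z:=X_{|X|_{K_\sigma}}$ is a nonempty closed subspace of $X$ satisfying $Z=Z^*$, so no point of $Z$ has a locally compact neighborhood in $Z$, and in particular every ball around a point of $Z$ has non-compact closure in $Z$. Fix any presentation $\p$ of $X$. Since every discrete closed $F\in F_{\aleph_0}(X)$ has $|F|_{\cb}\leq 1$, to contradict $(2)$ it suffices to construct, for each $\alpha<\omega_1$, a discrete closed $F_\alpha\subseteq Z$ with $\phi_\p(F_\alpha)\geq\alpha$. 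I would do this by transfinite recursion on $\alpha$, building a branching tree of $\p$-balls centered at points of $Z$: at each node the failure of local compactness lets us select infinitely many pairwise $\cw_\p$-separated smaller $\p$-balls whose centers still lie in $Z$ and which escape to infinity in $X$ (so no accumulation in $X$). Choosing a representative from each leaf produces a discrete closed subset of $X$ whose $\phi_\p$-rank is controlled from below by the tree depth. This is a direct adaptation, inside $Z$ and in terms of the ambient balls of $\p$, of the switch-map construction of Theorem~\ref{strongdepmet} and Corollary~\ref{descritebairearbit}.

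For the main direction $(1)\Rightarrow(3)$ I would prove by transfinite induction on $\delta\leq\gamma:=|X|_{K_\sigma}$ the following strengthened statement: there exist $\alpha_\p,\beta_\p<\omega_1$, depending only on $\p$, such that for every $F\in F_{\aleph_0}(X)$,
\[ D_\p^{(\omega|F|_{\cb}+\alpha_\p)\delta+\beta_\p}(A_F^\p)\subseteq\set{n\in\omf \mid \bap(n)\cap X_\delta\neq\emptyset}. \]
Applying this at $\delta=\gamma$, where $X_\gamma=\emptyset$, forces the left-hand side to be empty and yields the bound of $(3)$. The base case $\delta=0$ is the definition of $A_F^\p$. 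For the successor step $\delta'\to\delta'+1$, the layer $X_{\delta'}\setminus X_{\delta'+1}$ is open and locally compact in $X_{\delta'}$; the extra $(\omega|F|_\cb+\alpha_\p)$ iterations past the inductive threshold produce a branching tree of descendants of $n$ whose leaves, by the induction hypothesis, witness many points of $\bap(n)\cap X_{\delta'}$. If none of these witnesses cluster in $X_{\delta'+1}$, they accumulate in the locally compact stratum, so sufficiently small sub-balls there have compact closure in $X_{\delta'}$; applying Lemma~\ref{locallyupperbound} to $F\cap X_{\delta'}$ in such a sub-ball then produces a point of $F^{|F|_\cb}=\emptyset$, a contradiction.

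The hard part will be the limit stage $\delta=\lambda$: the induction hypothesis yields witnesses in $\bap(n)\cap X_{\delta_i}$ for each $\delta_i$ in a cofinal sequence $\delta_i\nearrow\lambda$, whereas we need a common witness in $X_\lambda=\bigcap_i X_{\delta_i}$. I plan to address this by strengthening the inductive claim so that its witness lies in $\ol{\bap(n)\cap F}\cap X_\delta$ and arises as a limit of witnesses from earlier levels, extracting this limit via the K\"onig-type argument of Lemma~\ref{Konig} and Remark~\ref{remkonig} together with the $\sigma$-compactness of $\ol{\bap(n)\cap F}$ as a closed subset of the $\sigma$-compact space $X$. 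Keeping the constants $\alpha_\p,\beta_\p$ countable while absorbing the extra derivative-rank needed to carry out the limit extraction uniformly in $\lambda$ will be the principal bookkeeping obstacle.
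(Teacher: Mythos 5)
Your $(3)\Rightarrow(2)$ is fine, and your $(2)\Rightarrow(1)$ is a genuinely different (and harder) route than necessary: the paper simply observes that a non-$\sigma$-compact $X$ contains a closed copy of $\omega^\omega$, so a bound $\phi_\p(F)\leq f(1)$ on discrete sets would, via part (2) of the Boundedness Theorem, force \emph{every} co-analytic rank on $F_{\aleph_0}(\omega^\omega)$ to be bounded on discrete sets, contradicting Corollary \ref{descritebairearbit}. You instead propose to rebuild high-rank discrete sets inside $Z=X_{|X|_{K_\sigma}}$ for an \emph{arbitrary} presentation $\p$; this is not obviously wrong, but the switch-map construction you want to adapt leans heavily on the ultrametric structure of $\omega^\omega$, and you would have to redo all of that work where a soft boundedness argument suffices.

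The real gap is in $(1)\Rightarrow(3)$, at exactly the point you flag but underestimate. Your inductive statement asks that $D_\p^{(\omega|F|_\cb+\alpha_\p)\delta+\beta_\p}(A_F^\p)$ be contained in $\set{n\mid \bap(n)\cap X_\delta\neq\emptyset}$, and at a limit $\lambda$ you need to pass from $\bap(n)\cap X_{\delta_i}\neq\emptyset$ for all $i$ to $\bap(n)\cap X_\lambda\neq\emptyset$. This implication is simply false for a fixed ball, and your proposed repair --- extracting a limit witness in $\ol{\bap(n)\cap F}$ via Lemma \ref{Konig} / Remark \ref{remkonig} --- requires compactness of that closure, which $\sigma$-compactness of $X$ does not provide; this is a missing idea, not bookkeeping. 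The paper's resolution is structurally different: it partitions $\omega^2$ into strata $A_\beta^0$, $A_\beta^1$ (balls meeting $X_\beta$ but not $X_{\beta+1}$, with compact resp.\ non-compact trace on $X_\beta$) together with an exceptional set $C$ of balls whose ``escape ordinal'' is a limit, proves $[A_\beta^1]_\p=[C]_\p=\emptyset$, and hence bounds $|A_\beta^1|_\p\leq\alpha_\p$ and $|C|_\p\leq\beta_\p$ uniformly; the derivative is then shown to successively exit each stratum, landing in $C$ after $(\omega|F|_\cb+\alpha_\p)|X|_{K_\sigma}$ steps and dying $\beta_\p$ steps later. This also repairs a second soft spot in your successor step: a ball meeting $X_{\delta'}\setminus X_{\delta'+1}$ and missing $X_{\delta'+1}$ need \emph{not} have compact closure of its trace on $X_{\delta'}$ (local compactness only gives this for suitably small balls centered near such points), which is precisely what the class $A_\beta^1$ and the ordinal $\alpha_\p$ are there to absorb.
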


\begin{proof}
It is clear that $(3)\implies (2)$, hence it suffices to prove $(2)\implies (1)$ and $(1)\implies (3)$.
\\

The implication $(2)\implies (1)$ is a consequence of the Boundedness Theorem. Indeed, let $\p$ be a presentation of $X$ and $f\colon \omega_1\rightarrow \omega_1$ be such that $\phi_\p(F)\leq f(|F|_{CB})$ for all $F\in F_{\aleph_0}(X)$. Then there is $\alpha_0\in \omega_1$ such that $\phi_\p(F)\leq \alpha_0$ whenever $F\in F_{\aleph_0}(X)$ is discrete. Now, assume for a contradiction that $X$ is not $\sigma$-compact. Then, since $X$ has $\omega^\omega$ as a closed subspace, we can obtain a co-analytic rank $\psi\colon F_{\aleph_0}(\omega^\omega)\rightarrow \omega_1$ for which $\psi(F)\leq \alpha_0$ for all discrete $F\in F_{\aleph_0}(\omega^\omega)$. Therefore, by the Boundedness Theorem,  all co-analytic ranks on $F_{\aleph_0}(\omega^\omega)$ are bounded on the discrete subsets, which contradicts Corollary \ref{descritebairearbit}.\\

For the implication $(1)\implies (3)$, assume that $X$ is $\sigma$-compact. Let $\p$ be a presentation of $X$ and  $\lambda=|X|_{K_\sigma}$. For each $\beta<\lambda$  define the subsets
\begingroup\makeatletter\def\f@size{11}\check@mathfonts
\begin{align*}
A_\beta^0&=\set{ n\in \omega^2\mid \bap(n)\cap X_\beta \neq \emptyset,  \bap(n)\cap X_{\beta+1}=\emptyset,    \ol{\bap(n)\cap X_\beta}\in K(X) } \\
A_\beta^1&=\set{ n\in \omega^2\mid \bap(n)\cap X_\beta \neq \emptyset,  \bap(n)\cap X_{\beta+1}=\emptyset, \ol{\bap(n)\cap X_\beta}\notin K(X) }
\end{align*}
\endgroup
and put $C=\omega^2\setminus (\bigcup_{\beta<\lambda}(A_\beta^0\cup A_\beta^1))$. For $i,j\in \set{0,1}$ and $\beta,\beta'<\lambda$ the following observations hold:
\begin{itemize}
\item[(1)] If $i\neq j$ or $\beta\neq \beta'$, then $A^i_\beta\cap A^j_{\beta'}=\emptyset$.
\item[(2)] If $n\in A_\beta^0$ and $m\in \omf$ satisfy $n\pc_\p m$,  then $m\in A^0_\gamma$ or $m\in A_{\gamma'}^1$ for some $\gamma\leq \beta$ or $\gamma'<\beta$.
\item[(3)] If $n\in A_\beta^1$ and $m\in \omf$ satisfy $n\pc_\p m$, then $m\in A^0_\gamma$ or $m\in A_{\gamma}^1$ for some $\gamma\leq \beta$.
\item[(4)] We have $[A_\beta^1]_\p=[C]_\p=\emptyset$.
\end{itemize}
From observation $(4)$ it follows that we may choose $\alpha_\p,\beta_\p<\omega_1$ such that $|C|_\p\leq \beta_\p$ and $|A_\beta^1|_\p\leq \alpha_\p$ for all $\beta<\lambda$. Note that if $\lambda$ is a successor, then $C=\emptyset$ and hence we may choose $\beta_\p=0$.

Now fix $F\in F_{\aleph_0}(X)$. We will argue that 
$$\phi_\p(F)\leq (\omega|F|_{\cb} + \alpha_{\p})\lambda + \beta_{\p}$$
by proving the following claims. 
\\
\\
\textbf{Claim 1:} Let $\beta<\lambda$. If $n\in D^{\omega \alpha}_\p(A_\beta^0\cap A_F^\p)$ for some $1\leq \alpha
 <\omega_1$, then there is $x\in \ol{\bap(n)\cap X_\beta}\cap F^\alpha$.\\
\\
\textit{Proof of Claim:} We will prove this claim by induction on $\alpha$.  First, assume that $n\in D_\p^\omega(A_\beta^0\cap A_F^\p)$. Then there is $(m_i)_i\in A_\beta^0\cap A_F$ and $(x_i)_i,(y_i)_i\in X$ such that for all $i,j\in \omega$ we have
\begin{itemize}
\item[(a)] $n\pc_\p m_i$ and $m_i(1)\geq n(1)+i$ 
\item[(b)] $x_i\in \bap(m_i)\cap X_\beta$ and $y_i\in \bap(m_i)\cap F$
\item[(c)]  $x_i\neq x_j$ and $y_i\neq y_j$ whenever $i\neq j$
\end{itemize}
Since $\ol{\bap(n)\cap X_\beta}$ is compact by definition of $A_\beta^0$, there must exist $x\in \ol{\bap(n)\cap X_\beta}$ and a subsequence $(x_{i_j})_j\subseteq (x_i)_i$ such that $x_{i_j}\rightarrow x$ as $j\rightarrow \infty$. Now, since $x_i,y_i\in \bap(m_i)$ for all $i\in \omega$ and $\diam(\bap(m_i))\rightarrow 0$ as $i\rightarrow \infty$, we must have $y_{i_j}\rightarrow x$ as $j\rightarrow \infty$, as well. Therefore, as $(y_{i_j})_j\in F$ and $F$ is closed, we must have that $x\in F^1$, as desired. 

The proof of the sucessor and the limit case can now be done as in the proof of Lemma \ref{locallyupperbound}.\begin{flushright}
$\lozenge$
\end{flushright}
 Before the next claim, we note that for all $\beta<\lambda$ an easy induction argument on $\eta<\omega_1$ shows, that observations (2),(3) implies that  for $n\in A_\beta^0$ we have
\[n\in D^\eta_\p\left(A_F^\p\setminus (\bigcup_{\gamma< \beta}(A_\gamma^0\cup A_\gamma^1))\right) \implies n\in D^\eta_\p(A_F^\p\cap A_\beta^0)\]
and
for $n\in A_\beta^1$ we have
\[n\in D^\eta_\p\left(A_F^\p\setminus (A_\beta^0\cup\bigcup_{\gamma< \beta}(A_\gamma^0\cup A_\gamma^1))\right) \implies n\in D^\eta_\p(A_F^\p\cap A_\beta^1)\]
for all $\eta<\omega_1$. From these implications we will obtain the next claim.\\
\\
\textbf{Claim 2:} We have
\begin{align*}
D^{(\omega|F|_\cb + \alpha_\p)\beta + \omega|F|_\cb}_\p(A_F^\p)&\subseteq A_F^\p\setminus (A_\beta^0\cup\bigcup_{\gamma< \beta}(A_\gamma^0\cup A_\gamma^1))\\
D^{(\omega|F|_\cb + \alpha_\p)(\beta+1)}_\p(A_F^\p)&\subseteq A_F^\p\setminus \bigcup_{\gamma\leq \beta}(A_\gamma^0\cup A_\gamma^1)
\end{align*}
for all $\beta<\lambda$.
\\
\\
\textit{Proof of Claim 2:} 
First we consider the case $\beta=0$.  If $n\in D_\p^{\omega|F|_\cb}(A_F^\p)$, then it follows by Claim 1  that $n\notin A_0^0$. By the above implications, we also obtain that if
\[n\in D_\p^{\omega|F|_\cb +\alpha_\p}(A_F^\p)=D^{\alpha_\p}_\p(D^{\omega|F|_\cb}_\p(A_F^\p))\subseteq D^{\alpha_\p}_\p(A_F^\p\setminus A_0^0),\]
then $n\notin A_0^1$, as $|A_0^1|_\p\leq \alpha_\p$.

Next, assume that the claim holds for some $\beta<\lambda$. Then we have
\begin{align*}
D^{(\omega|F|_\cb + \alpha_\p)(\beta+1)+ \omega|F|_\cb}_\p(A_F^\p)&=D^{\omega|F|_\cb}_\p\left(D_\p^{(\omega|F|_\cb + \alpha_\p)(\beta+1)}(A_F^\p)\right)\\
&\subseteq D^{\omega|F|_\cb}_\p\left(A_F^\p\setminus \bigcup_{\gamma\leq \beta}(A_\gamma^0\cup A_\gamma^1)\right).
\end{align*} 
So,  if $n\in D^{(\omega|F|_\cb + \alpha_\p)(\beta+1)+\omega|F|_\cb}_\p(A_F^\p)$, then $n\notin \bigcup_{\gamma< \beta+1}(A_\gamma^0\cup A_\gamma^1)$ and by Claim 1 and the implications above we have that $n\notin A_{\beta+1}^0$. Similarly one obtains
\[D^{(\omega|F|_\cb + \alpha_\p)((\beta+1)+1)}_\p(A_F^\p)\subseteq A_F^\p\setminus \bigcup_{\gamma\leq \beta+1}(A_\gamma^0\cup A_\gamma^1).\]
Finally assume that the claim holds for all $\beta<\xi$ for some limit $\xi<\omega_1$. Then we have
\begin{align*}
D^{(\omega|F|_\cb + \alpha_\p)\xi+ \omega|F|_\cb}_\p(A_F^\p)&=D^{\omega|F|_\cb}_\p\left(\bigcap_{\beta<\xi} D_\p^{(\omega|F|_\cb + \alpha_\p)(\beta+1)}(A_F^\p)\right)\\
&\subseteq D^{\omega|F|_\cb}_\p\left(A_F^\p\setminus(\bigcup_{\gamma< \xi}(A_\gamma^0\cup A_\gamma^1)\right).
\end{align*} 
So $n\in D^{(\omega|F|_\cb + \alpha_\p)\xi+ \omega|F|_\cb}_\p(A_F^\p)$ implies $n\notin A_\xi^0\cup\bigcup_{\gamma< \xi}(A_\gamma^0\cup A_\gamma^1)$, as desired. The other inclusion is obtained similarly.
\begin{flushright}
$\lozenge$
\end{flushright}

To finish the proof, note that Claim 2 implies that 
\[D_\p^{(\omega|F|_\cb +\alpha_\p)\lambda}(A_F^\p)\subseteq C\]
and therefore we may conclude that $D_\p^{(\omega|F|_\cb +\alpha_\p)\lambda +\beta_\p}(A_F^\p)=\emptyset$.

\end{proof}

\begin{bibdiv}
\begin{biblist}

\bib{Bing}{article}{
   author={Bing, R. H.},
   title={Extending a metric},
   journal={Duke Math. J.},
   volume={14},
   date={1947},
   pages={511--519},
   issn={0012-7094},
   review={\MR{0024609}},
}

\bib{Kechris}{book}{
   author={Kechris, A. S.},
   title={Classical descriptive set theory},
   series={Graduate Texts in Mathematics},
   volume={156},
   publisher={Springer-Verlag, New York},
   date={1995},
   pages={xviii+402},
   isbn={0-387-94374-9},
   review={\MR{1321597}},
   doi={10.1007/978-1-4612-4190-4},
}

\bib{WW}{article}{
   author={P. Wesolek and J. Williams},
   title={Chain conditions, elementary amenable groups, and descriptive set theory},
   journal={Groups Geom. Dyn.},
   volume={11},
   date={2017},
   pages={649--684},
   issn={1661-7207},
   review={\MR{36680559}},
}

\end{biblist}
\end{bibdiv}

\bigskip

 Department of Mathematical Sciences

 University of Copenhagen

 Universitetsparken 5

 DK-2100 Copenhagen

\textsf{vibquo@math.ku.dk}

\end{document}